\theoremstyle{definition}
\newtheorem{definition}{Definition}[section]
\theoremstyle{plain}
\newtheorem{lemma}[definition]{Lemma}
\newtheorem{theorem}[definition]{Theorem}
\newtheorem{proposition}[definition]{Proposition}
\newtheorem{corollary}[definition]{Corollary}
\theoremstyle{remark}
\newtheorem{remark}[definition]{Remark}
\newtheorem{notation}[definition]{Notation}
\newcommand{\myint}{\operatorname{int}}
\newcommand{\myfdim}{\operatorname{fdim}}
\begin{document}

\title[Locally o-minimal structures with tame properties]{Locally o-minimal structures with tame topological properties}
\author[M. Fujita]{Masato Fujita}
\address{Department of Liberal Arts,
Japan Coast Guard Academy,
5-1 Wakaba-cho, Kure, Hiroshima 737-8512, Japan}
\email{fujita.masato.p34@kyoto-u.jp}

\begin{abstract}
We consider locally o-minimal structures possessing tame topological properties shared by models of DCTC and uniformly locally o-minimal expansions of the second kind of densely linearly ordered abelian groups.
We derive basic properties of dimension of a set definable in the structures including the
addition property, which is the dimension equality for definable maps whose fibers are equi-dimensional.
A decomposition theorem into quasi-special submanifolds is also demonstrated.
\end{abstract}

\subjclass[2010]{Primary 03C64}

\keywords{uniformly locally o-minimal structure, quasi-special submanifold, type complete}

\maketitle

\section{Introduction}\label{sec:intro}
An o-minimal structure enjoys many tame topological properties such as monotonicity and definable cell decomposition \cite{vdD}.
A locally o-minimal structure was first introduced in \cite{TV} as a local counterpart of an o-minimal structure.
In spite of its similarity to an o-minimal structure in its definition, a locally o-minimal structure does not enjoy the localized tame properties enjoyed by o-minimal structures such as the local monotonicity theorem and the local definable cell decomposition theorem.
Lack of tame topological properties prevents us to establish a tame dimension theory for sets definable in the structures.
We expect that discrete definable set is of dimension zero.
We also hope that the projection image of a definable set is of dimension not greater than the dimension of the original set.
However, the projection image of a discrete definable set is not necessarily discrete in some locally o-minimal structure as in \cite[Example 12]{KTTT}.

We can recover tame topological properties if we employ additional assumptions on locally o-minimal structures.
We can also establish a tame dimension theory using such tame topological properties.

For instance, the author proposed uniformly locally o-minimal structures of the second kind in \cite{Fuji}.
Local definable cell decomposition theorem \cite[Theorem 4.2]{Fuji} holds true when they are definably complete.
We can derive several natural dimension formulae \cite[Section 5]{Fuji} and \cite[Theorem 1.1, Corollary 1.2]{Fuji2} for a definably complete uniformly locally o-minimal expansion of the second kind of a densely linearly ordered abelian group using the tame topological properties.
A definably complete uniformly locally o-minimal expansion of the second kind of a densely linearly ordered abelian group is called a DCULOAS structure in this paper.

Another example is a model of DCTC.
Schoutens tried to figure out the common features of the models of the theory of all o-minimal structures in his challenging work \cite{S}.
A model of DCTC was introduced in it.
It enjoys tame topological and dimensional properties as partially given in \cite{S} and also demonstrated in this paper.

The purpose of this paper is to develop dimension formulae when locally o-minimal structures are definably complete and enjoy the tame topological property given in the following definition.
The previous two examples possess this property.
\begin{definition}\label{def:tame_top}
Consider a locally o-minimal structure.
We consider the following property on it.
\begin{enumerate}
\item[(a)]  The image of a nonempty definable discrete set under a coordinate projection is again discrete.
\end{enumerate}
\end{definition}

The following formulae on dimensions are demonstrated in this paper under the assumption that definably complete locally o-minimal structures enjoy the property (a) in Definition \ref{def:tame_top}.
\begin{enumerate}
\item[(1)] The inequality on the dimensions of the domain of definition and the image of a definable map (Theorem \ref{thm:dim}(5));
\item[(2)] The inequality on the dimension of the set of points at which a definable function is discontinuous (Theorem \ref{thm:dim}(6));
\item[(3)] The inequality on the dimensions of a definable set and its frontier (Theorem \ref{thm:dim}(7));
\item[(4)] Addition property. The dimension equality for definable maps whose fibers are equi-dimensional (Theorem \ref{thm:addition}).
\end{enumerate}

In o-minimal structures, definable sets are partitioned into finite number of nicely shaped definable subsets called cells \cite[Chapter 3 (2.11)]{vdD}.
Partitions into finite cells are unavailable in locally o-minimal structures.
We provide alternative partitions into finite number of another nicely shaped definable subsets called quasi-special submanifolds.
The definition of quasi-special submanifolds is found in Definition \ref{def:quasi-special}.
Partitions into quasi-special submanifolds are available in locally o-minimal structures enjoying the property (a) in Definition \ref{def:tame_top} (Theorem \ref{thm:decomposition} and Theorem \ref{thm:frontier_condition}).
Quasi-special submanifolds only satisfy looser conditions than special manifolds defined in \cite{M2,T}.
Decomposition theorems into special submanifolds hold true for locally o-minimal expansions of fields \cite{F} and d-minimal expansions of the real field \cite{M2,T}.
Unlike special submanifolds, our partitions into quasi-special submanifolds are available without assuming that the structure is an expansion of an ordered field.
It is an advantage of our decomposition theorem.

A DCULOAS structure and a model of DCTC possess the property (a) in Definition \ref{def:tame_top}.
Therefore, the above dimension formulae and the decomposition theorem into quasi-special submanifolds also hold true for them.
Some of the assertions were presented in the previous studies.
In the case of a DCULOAS structure, the dimension inequalities (1) through (3) were demonstrated in \cite{Fuji, Fuji2}.
The addition property (4) and the decomposition theorem first appear in this paper. 
As to a model of DCTC, the inequalities in the planar case were proved in \cite{S}. 
The author could not find the dimension formulae for higher dimensions in the previous studies.

This paper is organized as follows.
We first derive basic topological properties of the structure in Section \ref{sec:prel}.
Section \ref{sec:basic} is devoted for the derivation of the dimension formulae (1) through (4).
We also prove the decomposition theorem into quasi-special submanifolds in Section \ref{sec:decomposition}.

We introduce the terms and notations used in this paper.
The term `definable' means `definable in the given structure with parameters' in this paper.
For any set $X \subset M^{m+n}$ definable in a structure $\mathcal M=(M,\ldots)$ and for any $x \in M^m$, the notation $X_x$ denotes the fiber defined as $\{y \in M^n\;|\; (x,y) \in X\}$ unless another definition is explicitly given.
For a linearly ordered structure $\mathcal M=(M,<,\ldots)$, an open interval is a definable set of the form $\{x \in R\;|\; a < x < b\}$ for some $a,b \in M$.
It is denoted by $(a,b)$ in this paper.
An open box in $M^n$ is the direct product of $n$ open intervals.
Let $A$ be a subset of a topological space.
The notations $\myint(A)$ and $\overline{A}$ denote the interior and the closure of the set $A$, respectively.

\section{Tame topological properties}\label{sec:prel}
\subsection{Basic lemmas}
We first review the definitions of local o-minimality and definably completeness.
\begin{definition}[\cite{TV}]
A densely linearly ordered structure without endpoints $\mathcal M=(M,<,\ldots)$ is \textit{locally o-minimal} if, for every definable subset $X$ of $M$ and for every point $a\in M$, there exists an open interval $I$ containing the point $a$ such that $X \cap I$ is  a finite union of points and open intervals.
\end{definition}

\begin{definition}[\cite{M}]
An expansion of a densely linearly ordered set without endpoints $\mathcal M=(M,<,\ldots)$ is \textit{definably complete} if any definable subset $X$ of $M$ has the supremum and  infimum in $M \cup \{\pm \infty\}$.
\end{definition}

We give an equivalence condition for a definably complete structure being locally o-minimal. 

\begin{lemma}\label{lem:local}
Consider a definably complete structure $\mathcal M=(M,<,\ldots)$.
The following are equivalent:
\begin{enumerate}
\item[(1)] The structure $\mathcal M$ is a locally o-minimal structure.
\item[(2)] Any definable set in $M$ has a nonempty interior or it is closed and discrete. 
\end{enumerate}
\end{lemma}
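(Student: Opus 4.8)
The plan is to prove both implications directly, using definable completeness to locate boundary points. For the implication (1) $\Rightarrow$ (2), suppose $\mathcal{M}$ is locally o-minimal and let $X \subseteq M$ be definable with empty interior. I want to show $X$ is closed and discrete. Since $X$ has empty interior, local o-minimality forces $X$ to contain no open interval, so for each $a \in M$ there is an open interval $I_a \ni a$ with $X \cap I_a$ a finite union of points (the open-interval pieces cannot occur, as they would give interior points). This immediately yields that $X$ is discrete. To see $X$ is closed, take $a \in \overline{X}$; pick the interval $I_a$ as above, so $X \cap I_a$ is a finite set of points, and since $a$ is in the closure of $X$ and $I_a$ is a neighborhood of $a$, we get $a \in X \cap I_a$. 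Hence $\overline{X} \subseteq X$.

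For the converse (2) $\Rightarrow$ (1), assume every definable subset of $M$ either has nonempty interior or is closed and discrete, and fix a definable $X \subseteq M$ and a point $a \in M$. The strategy is to produce an open interval $I \ni a$ on which $X$ looks o-minimal by controlling the boundary points of $X$ near $a$. Consider the frontier $\partial X = \overline{X} \setminus \myint(X)$, or more precisely the set of points near which $X$ fails to be locally a finite union of points and intervals; the key observation is that $\partial X$ is definable and has empty interior (a point interior to $\partial X$ would contradict local simplicity), so by hypothesis $\partial X$ is closed and discrete. A closed discrete definable set in a definably complete structure has the property that each of its points is isolated with a definable gap on each side — using the supremum/infimum operation, for the point $a$ (whether or not $a \in \partial X$) I can find $b < a < c$ such that $(b,c) \cap \partial X \subseteq \{a\}$. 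On the interval $(b,a)$ and on $(a,c)$ the set $X$ has no frontier points, so $X$ is open or its complement is open there; by definable completeness this forces $X \cap (b,a)$ and $X \cap (a,c)$ each to be an open interval, the empty set, or all of the respective interval. Together with the single possible point $a$, this exhibits $X \cap (b,c)$ as a finite union of points and open intervals, as required.

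The main obstacle I anticipate is making the notion of "frontier/bad set" $\partial X$ precise and verifying it is definable with empty interior purely from hypothesis (2), without already assuming local o-minimality — one must argue that if $\partial X$ had interior then some subinterval would consist entirely of frontier points, and then apply hypothesis (2) to a suitable definable subset (such as $X$ itself restricted to that subinterval, or its complement) to derive a contradiction. A secondary technical point is the bootstrapping step showing that a definable subset $Y$ of an interval $(b,c)$ with empty frontier inside $(b,c)$ must be one of $\emptyset$, $(b,c)$, or an open subinterval: here one applies definable completeness to $\sup$ and $\inf$ of $Y \cap (b,c)$, using that both $Y$ and its complement are relatively open to rule out the set being disconnected. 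Once these two points are handled the rest is routine.
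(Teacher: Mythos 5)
Your proposal is correct and follows essentially the same route as the paper: for (2) $\Rightarrow$ (1) both arguments reduce to showing the boundary $\overline{X}\setminus\myint(X)$ has empty interior (hence is closed and discrete by hypothesis), isolating its points, and using definable connectedness of intervals to see that $X$ is locally empty or everything on either side. The only difference is that the paper imports the empty-interior fact from Miller's \cite[Corollary 1.5]{M}, whereas you derive it directly by applying hypothesis (2) to $X\cap J$ for an interval $J$ of boundary points; both work.
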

\begin{proof}
The implication (1) $\Rightarrow$ (2) is obvious by the definition of local o-miniality.
We demonstrate the opposite implication.
Let $X$ be a definable subset in $M$.
Consider the boundary $Y=\overline{X} \setminus \myint(X)$.
Let $J$ be an arbitrary open interval in $M$.
We have $Y \cap J = \emptyset$ if and only if $J \subset \myint(X)$ or $J \subset M \setminus \overline{X}$ by \cite[Corollary 1.5]{M}.
For any arbitrary point $a \in M$, there exists an open interval $I$ containing the point $a$ such that $I \cap Y$ is an empty set or a singleton $\{a\}$ because $Y$ is closed and discrete by the assumption.
The open intervals $\{x \in I\;|\; x>a\}$ and $\{x \in I\;|\; x<a\}$ are contained in $\myint(X)$ or $M \setminus \overline{X}$.
Hence, $I \cap X$ is a finite union of points and open intervals.
We have demonstrated that the structure $\mathcal M$ is locally o-minimal.
\end{proof}

We introduce two consequences of the property (a) in Definition \ref{def:tame_top}.

\begin{lemma}\label{lem:key0}
Consider a definably complete locally o-minimal structure with the property (a) in Definition \ref{def:tame_top}.
A definable discrete set is closed.
\end{lemma}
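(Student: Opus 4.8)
The plan is to prove the statement by induction on $n$, where $X\subseteq M^n$ is the given definable discrete set; concretely I will show $\overline{X}\subseteq X$. For the base case $n=1$ I would simply invoke Lemma~\ref{lem:local}: a definable subset of $M$ either has nonempty interior or is closed and discrete, and since the order is dense, an interior point of $X$ would lie in an open interval contained in $X$, whose points are then non-isolated in $X$, contradicting discreteness. Hence when $n=1$ the set $X$ is closed.

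For the inductive step, let $\pi\colon M^n\to M^{n-1}$ be the projection onto the first $n-1$ coordinates, and write $p=(p',p_n)$ with $p'=\pi(p)$ for $p\in M^n$. Property (a) of Definition~\ref{def:tame_top} makes $\pi(X)$ discrete, and it is definable, so by the induction hypothesis $\pi(X)$ is closed in $M^{n-1}$. For each $a\in\pi(X)$ the fiber $X_a\subseteq M$ is definable, and it is homeomorphic to the subspace $\{a\}\times X_a=X\cap(\{a\}\times M)$ of the discrete space $X$, hence is itself discrete; by the base case $X_a$ is closed in $M$.

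Now I would take an arbitrary $p\in\overline{X}$ and argue $p\in X$. Continuity of $\pi$ gives $p'=\pi(p)\in\pi(\overline{X})\subseteq\overline{\pi(X)}=\pi(X)$, so $X_{p'}$ is a nonempty closed discrete subset of $M$. Suppose toward a contradiction that $p_n\notin X_{p'}$. Closedness of $X_{p'}$ provides an open interval $I\ni p_n$ with $I\cap X_{p'}=\emptyset$, while discreteness of $\pi(X)$ together with $p'\in\pi(X)$ provides an open box $B'\subseteq M^{n-1}$ containing $p'$ with $B'\cap\pi(X)=\{p'\}$. Then $B'\times I$ is an open box around $p$, so it meets $X$ in some point $(x',x_n)$; but $x'\in\pi(X)\cap B'=\{p'\}$ forces $x'=p'$, hence $x_n\in X_{p'}\cap I=\emptyset$, a contradiction. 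Therefore $p_n\in X_{p'}$, that is, $p\in X$, which completes the induction.

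The only genuine obstacle is this last step: a priori the points of $X$ accumulating at $p$ could approach $p'$ in the first $n-1$ coordinates without ever attaining the value $p'$, so closedness of a single fiber is not by itself enough. The role of Property (a) is precisely to make $\pi(X)$ discrete, so that $p'$ is isolated in it; combined with the already-established closedness of the fiber $X_{p'}$, this pins every accumulating point onto the fiber over $p'$, where by construction none exists.
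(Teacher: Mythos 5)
Your proof is correct and rests on the same two ingredients as the paper's: property (a) to make projections of $X$ discrete, Lemma~\ref{lem:local} to get closedness in the one-dimensional case, and an isolating box to trap a closure point onto $X$. The only difference is organizational — you induct on $n$ via the projection onto the first $n-1$ coordinates and the fibers, whereas the paper applies all $n$ single-coordinate projections $\pi_k:M^n\to M$ at once and intersects the resulting intervals; both arguments are sound.
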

\begin{proof}
Let $\mathcal M=(M,<,\ldots)$ be the structure in consideration.
Let $X$ be a nonempty discrete definable subset of $M^n$.
Let $\pi_k:M^n \rightarrow M$ be the coordinate projection onto the $k$-th coordinate for all $1 \leq k \leq n$.
The images $\pi_k(X)$ are discrete by the property (a).
They are closed by Lemma \ref{lem:local}.
Let $x$ be an accumulation point of $X$.
We have $\pi_k(x) \in \pi_k(X)$ for all $1 \leq k \leq n$ because $\pi_k(x)$ are accumulation points of $\pi_k(X)$ and $\pi_k(X)$ are closed.
We can take open intervals $I_k$ so that $\pi_k(X) \cap I_k =\{\pi_k(x)\}$ because $\pi_k(X)$ are discrete.
It implies that $X \cap (I_1 \times \cdots \times I_n)$ consists of at most one point $x$ because $X \cap (I_1 \times \cdots \times I_n) \subseteq \prod_{k=1}^n \pi_k(X) \cap I_k = \{x\}$.
It means that $x \in X$ because $x$ is an accumulation point of $X$.
\end{proof}

\begin{lemma}\label{lem:aaa}
Consider a definably complete locally o-minimal structure $\mathcal M=(M,<,\ldots)$ with the property (a) in Definition \ref{def:tame_top}.
Let $f:X \rightarrow M$ be a definable map.
If the image $f(X)$ and all fibers of $f$ are discrete, then so is $X$. 
\end{lemma}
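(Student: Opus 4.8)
\emph{Proof proposal.} The plan is to realize $X$ as a coordinate projection of the graph of $f$, to show that this graph is discrete, and then to invoke property (a). Write $X\subseteq M^n$, and assume $X\neq\emptyset$ (otherwise $X$ is trivially discrete). Let $\Gamma=\{(x,f(x))\;|\;x\in X\}\subseteq M^{n+1}$ be the graph of $f$; it is definable, and the coordinate projection $\pi\colon M^{n+1}\to M^n$ onto the first $n$ coordinates maps $\Gamma$ onto $X$. Hence, once we know that $\Gamma$ is a nonempty definable discrete set, property (a) gives immediately that $X=\pi(\Gamma)$ is discrete.

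So the heart of the proof is to verify that $\Gamma$ is discrete. Fix a point $(x_0,c_0)\in\Gamma$, so that $c_0=f(x_0)\in f(X)$. Since $f(X)\subseteq M$ is discrete, the point $c_0$ is isolated in $f(X)$, so there is an open interval $I\ni c_0$ with $f(X)\cap I=\{c_0\}$. Since the fiber $f^{-1}(c_0)\subseteq M^n$ is discrete, the point $x_0$ is isolated in it, so there is an open box $B\ni x_0$ in $M^n$ with $f^{-1}(c_0)\cap B=\{x_0\}$. Then $B\times I$ is an open box in $M^{n+1}$ containing $(x_0,c_0)$, and I claim $(B\times I)\cap\Gamma=\{(x_0,c_0)\}$: if $(x,c)\in\Gamma$ with $x\in B$ and $c\in I$, then $c=f(x)\in f(X)\cap I=\{c_0\}$, hence $c=c_0$ and therefore $x\in f^{-1}(c_0)\cap B=\{x_0\}$. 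As $(x_0,c_0)$ was arbitrary, $\Gamma$ is discrete.

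The subtle point — and the reason for passing to the graph rather than arguing directly inside $X$ — is that $f$ is not assumed continuous, so one cannot simply pull back the interval $I$ to an open subset of $X$: a point $x$ lying near $x_0$ in $X$ may well satisfy $f(x)\notin I$, and nothing then prevents $x$ from belonging to some other fiber that accumulates at $x_0$. Recording the value $f(x)$ as an extra coordinate removes this obstacle, since membership in $B\times I$ forces $f(x)=c_0$ and thereby confines $x$ to the single fiber $f^{-1}(c_0)$, in which $x_0$ is isolated by hypothesis. I expect this confinement step to be the only place requiring care; beyond the definitions, the argument uses only property (a), and in particular it does not even appeal to Lemma \ref{lem:key0}.
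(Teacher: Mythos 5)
Your proof is correct and follows essentially the same route as the paper's: both pass to the graph of $f$, isolate each point of the graph by combining an interval that isolates $f(x_0)$ in the discrete image with a box that isolates $x_0$ in the discrete fiber, and then project back to $X$ via property (a). The only cosmetic difference is that the paper phrases this as a reduction to the case where $f$ is a coordinate projection before running the same two-step isolation argument.
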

\begin{proof}
We first reduce to the case in which $f$ is the restriction of a coordinate projection.
Let $X$ be a definable subset of $M^n$ and $\pi:M^{n+1} \rightarrow M$ be the coordinate projection onto the last coordinate.
Consider the graph $\Gamma(f)$ of the definable map $f$.
The image $\pi(\Gamma(f))=f(X)$ and all the fibers $\Gamma(f) \cap \pi^{-1}(x)$ are discrete by the assumption.
If the graph $\Gamma(f)$ is discrete, the definable set $X$ is also discrete by the property (a) because $X$ is the projection image of the discrete set  $\Gamma(f)$.
We have reduced to the case in which $f$ is the restriction of the coordinate projection onto the last coordinate $\pi:M^{n+1} \rightarrow M$ to a definable subset $Y$ of $M^{n+1}$.

Take an arbitrary point $x \in Y$.
Since $\pi(Y)$ is discrete by the assumption, we can take an open interval $I$ containing the point $\pi(x)$ such that $\pi(Y) \cap I$ is a singleton.
Since the fiber $\pi^{-1}(\pi(x)) \cap Y$ is discrete, there exists an open box $B$ containing the point $x$ such that $Y \cap (B \times \{\pi(x)\})$ is a singleton.
The open box $B \times I$ contains the point $x$ and the intersection of $Y$ with $B \cap I$ is a singleton.
We have demonstrated that $Y$ is discrete.
\end{proof}

\subsection{Tame topological properties}

We defined the property (a) in Definition \ref{def:tame_top}.
We also consider the following topological properties in this paper.
\begin{definition}\label{def:tame_top2}
Consider a locally o-minimal structure $\mathcal M=(M,<,\ldots)$.
We consider the following properties on $\mathcal M$.
\begin{enumerate}
\item[(b)] Let $X_1$ and $X_2$ be definable subsets of $M^m$.
Set $X=X_1 \cup X_2$.
Assume that $X$ has a nonempty interior.
At least one of $X_1$ and $X_2$ has a nonempty interior.
\item[(c)] Let $A$ be a definable subset of $M^m$ with a nonempty interior and $f:A \rightarrow M^n$ be a definable map.
There exists a definable open subset $U$ of $M^m$ contained in $A$ such that the restriction of $f$ to $U$ is continuous.
\item[(d)] Let $X$ be a definable subset of $M^n$ and $\pi: M^n \rightarrow M^d$ be a coordinate projection such that the the fibers $X \cap \pi^{-1}(x)$ are discrete for all $x \in \pi(X)$.
Then, there exists a definable map $\tau:\pi(X) \rightarrow X$ such that $\pi(\tau(x))=x$ for all $x \in \pi(X)$.
\end{enumerate}
These properties and the property (a) in Definition \ref{def:tame_top} are not independent.
For definably complete locally o-minimal structures, the property (a) is equivalent to the properties (c) and (d).
The property (c) implies the property (b).
They are demonstrated in Theorem \ref{thm:dependence}.
\end{definition}

We introduce the following notations for simplicity.
\begin{notation}
Consider a locally o-minimal structure $\mathcal M=(M,<,\ldots)$.
A definable function $f:X \rightarrow M \cup \{\infty\}$ denotes a pair of disjoint definable subsets $X_o$ and $X_\infty$ with $X=X_o \cup X_\infty$ and a definable function defined on $X_o$. 
We consider that the function $f$ is constantly $\infty$ on $X_\infty$.
The function $f:X \rightarrow M \cup \{\infty\}$ is called continuous if $X=X_\infty$ or $X=X_o$ and the function $f$ is continuous.
If the structure $\mathcal M$ enjoys the properties (b) and (c) in Definition \ref{def:tame_top2}, the restriction of $f$ to some definable open set is continuous when the domain of definition $X$ has a nonempty interior.
We define $g:X \rightarrow M \cup \{-\infty\}$ similarly.
\end{notation}

The following lemma is a consequence of the properties (b) and (c).

\begin{lemma}\label{lem:key3}
Consider a definably complete locally o-minimal structure $\mathcal M=(M,<,\ldots)$ enjoying the properties (b) and (c) in Definition \ref{def:tame_top2}.
Let $X$ be a definable subset of $M^{m+n}$.
Set 
\[
S=\{x \in M^m\;|\; \text{the fiber }X_x \text{ has a nonempty interior}\}\text{.}
\]
If $S$ has a nonempty interior, $X$ also has a nonempty interior. 
\end{lemma}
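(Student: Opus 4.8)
The plan is to argue by induction on $n$, reducing the whole statement to the case $n=1$. For the inductive step, identify $M^{m+n}$ with $M^{(m+n-1)}\times M$, write $X_z\subseteq M$ for the fiber of $X$ over $z\in M^{m+n-1}$, and set $T=\{z\in M^{m+n-1}\;|\;X_z\text{ has a nonempty interior}\}$, which is definable. If $x\in S$, then $X_x\subseteq M^{n-1}\times M$ contains an open box $B'\times(a,b)$, and for every $y\in B'$ the fiber $X_{(x,y)}$ contains $(a,b)$, so $B'\subseteq T_x$; hence the definable set $\{x\in M^m\;|\;T_x\text{ has a nonempty interior}\}$ contains $S$ and therefore has a nonempty interior. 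The induction hypothesis applied to $T\subseteq M^{m+(n-1)}$ then gives that $T$ has a nonempty interior, and the case $n=1$ applied to $X\subseteq M^{(m+n-1)+1}$ gives that $X$ has a nonempty interior. So it remains to prove the case $n=1$.

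Assume $n=1$ and put $U=\myint(S)$, a nonempty open definable subset of $M^m$. Replacing $X$ by the definable subset $\{(x,t)\;|\;t\in\myint(X_x)\}$, which is contained in $X$ and gives rise to the same set $S$, we may assume that every fiber $X_x$ with $x\in U$ is a nonempty open subset of $M$. Fix an element $0\in M$. The two definable sets $\{x\in U\;|\;X_x\cap(0,\infty)\neq\emptyset\}$ and $\{x\in U\;|\;X_x\cap(-\infty,0]\neq\emptyset\}$ cover $U$, so by property (b) one of them has a nonempty interior; by symmetry assume it is the first, and replace $U$ by its interior. Define the definable functions $\beta(x)=\inf(X_x\cap(0,\infty))\in M$ and $\gamma(x)=\inf\{t>\beta(x)\;|\;t\notin X_x\}\in M\cup\{+\infty\}$; directly from these definitions $(\beta(x),\gamma(x))\subseteq X_x$ for every $x\in U$.

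The crucial point is that $\beta(x)<\gamma(x)$ for every $x\in U$. Since $\beta(x)=\inf(X_x\cap(0,\infty))$ and $X_x$ is open, $X_x$ meets every interval $(\beta(x),t)$ with $t>\beta(x)$; by local o-minimality $X_x$ coincides, on some open interval around $\beta(x)$, with a finite union of open intervals, and the previous sentence then forces one of these intervals to have the form $(p,q)$ with $p\leq\beta(x)<q$, whence $(\beta(x),q)\subseteq X_x$ and $\gamma(x)\geq q>\beta(x)$. Now, by property (c) together with the stated convention for functions valued in $M\cup\{+\infty\}$, after shrinking $U$ to a nonempty definable open subset we may assume that $\beta$ is continuous and that $\gamma$ is either continuous and finite or identically $+\infty$. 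Choose $x_0\in U$ and then $c,d\in M$ with $\beta(x_0)<c<d<\gamma(x_0)$. By continuity, the set $\{x\in U\;|\;\beta(x)<c\text{ and }\gamma(x)>d\}$ is open and contains $x_0$, hence contains an open box $B$; for every $x\in B$ we get $(c,d)\subseteq(\beta(x),\gamma(x))\subseteq X_x$, so $B\times(c,d)\subseteq X$, proving that $X$ has a nonempty interior.

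I expect the case $n=1$ to be the main obstacle, and within it the fact that in a structure which is merely locally o-minimal an open definable subset of $M$ can behave badly near $\pm\infty$, so that there is no canonical interval to extract from $X_x$ definably and uniformly in $x$; this is exactly what forces the splitting relative to a fixed point $0$ and the use of local o-minimality to ensure that $(\beta(x),\gamma(x))$ is a genuine non-degenerate interval.
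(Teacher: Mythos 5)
Your proof is correct and follows essentially the same route as the paper's: the same induction on $n$ via the set of base points over which the last-coordinate fiber contains an open interval, and for $n=1$ the same device of splitting relative to a fixed element of $M$, introducing definable infimum/supremum functions that cut a nondegenerate interval out of each fiber, and invoking properties (b) and (c) to make them continuous on a box. Your explicit verification that $\beta(x)<\gamma(x)$ via local o-minimality fills in a step the paper leaves implicit.
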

\begin{proof}
We first consider the case in which $n=1$.
Take $c \in M$.
Consider the definable sets 
\begin{align*}
&X_{>c}=\{(x,y) \in M^m \times M\;|\; (x,y) \in X,\ y>c\}\text{ and }\\ 
&X_{<c}=\{(x,y) \in M^m \times M\;|\; (x,y) \in X,\ y<c\}\text{.}
\end{align*}
For any $x \in S$, at least one of the fibers $(X_{>c})_x$ and $(X_{<c})_x$ of $X_{>c}$ and $X_{<c}$ at $x$ has a nonempty interior by the property (b).
Set 
\begin{align*}
&S_{>c}=\{x \in M^m\;|\; \text{the fiber }(X_{>c})_x\text{ has a nonempty interior}\}\text{.}
\end{align*}
We define $S_{<c}$ in the same manner.
We get $S=S_{>c} \cup S_{<c}$.
Assume that $S$ has a nonempty interior.
At least one of $S_{>c}$ and $S_{<c}$ has a nonempty interior by the property (b) again.
We consider the case in which $S_{>c}$ has a nonempty interior.
We can prove the lemma similarly in the other case.
If $X_{>c}$ has a nonempty interior, the definable set $X$ obviously has a nonempty interior.
It implies that the lemma holds true for $X$ if it holds true for $X_{>c}$.
Therefore, we may assume that there is $c \in M$ satisfying $y>c$ for all $(x,y) \in X$.

Consider the definable function $f:S \rightarrow M$ given by $$f(x)=\inf\{y \in M\;|\; y \text{ is contained in the interior of the fiber }X_x\}\text{.}$$
It is well-defined by the above assumption.
Define the definable function $g:S \rightarrow M \cup \{\infty\}$ by $$g(x)=\sup\{y \in M\;|\; X_x \text{ contains an interval }(f(x),y)\}\text{.}$$
There exists an open box $V$ contained in $S$ such that the restrictions of $f$ and $g$ to $V$ are continuous by the properties (b) and (c) in Definition \ref{def:tame_top2}.
The set $X$ contains an open set $\{(x,y) \in V \times M\;|\; f(x)<y<g(x)\}$.
We have demonstrated the lemma for $n=1$.

We next consider the case in which $n>1$.
Consider the projection $\pi_1:M^{m+n} \rightarrow M^{m+n-1}$ forgetting the last coordinate and the projection $\pi_2:M^{m+n-1} \rightarrow M^m$ onto the first $m$ coordinates.
Set $\pi=\pi_2 \circ \pi_1$, 
\begin{align*}
T&=\{t \in \pi_1(X)\;|\;  \text{the fiber }X_t \text{ contains a nonempty open interval}\}\text{ and }\\
U &=\{u \in \pi(X)\;|\;  \text{the fiber }T_u \text{ has a nonempty interior}\}\text{.}
\end{align*}
The definable set $S$ is contained in $U$.
In particular, $U$ has a nonempty interior.
Applying the lemma to the pair of $T$ and the restriction of $\pi_2$ to $T$, we have $\myint(T) \not= \emptyset$ by the induction hypothesis.
We get $\myint(X) \not= \emptyset$ by the lemma for $n=1$.
\end{proof}

We do not use the following proposition in this paper, but it is worth to be mentioned.
It is a stronger version of definable Baire property discussed in \cite{FS,H}.
\begin{proposition}[Strong definable Baire property]\label{prop:baire}
Consider a definably complete locally o-minimal structure $\mathcal M=(M,<,\ldots)$ enjoying the property (a) in Definition \ref{def:tame_top}.
Take $c \in M$.
Let $\{X\langle r\rangle\}_{r>c}$ be a parameterized increasing family of definable sets of $M^n$; that is, there exists a definable subset $\mathcal X$ of $M^{n+1}$ such that $X\langle r\rangle$ coincides with the fiber $\mathcal X_r$ for any $r>c$ and we have $X\langle r\rangle \subset X\langle s\rangle$ if $r<s$.
Set $X=\bigcup_{r>c}X\langle r\rangle$.
The definable set $X\langle r\rangle$ has a nonempty interior for some $r>c$ if $X$ has a nonempty interior.  
\end{proposition}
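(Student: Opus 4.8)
The plan is to deduce the proposition from Lemma~\ref{lem:key3} by transposing the family $\mathcal X$, that is, by interchanging the roles of the parameter $r$ and the point of $M^n$. As a preliminary remark, since the structure enjoys the property (a), it also enjoys the properties (b) and (c) by Theorem~\ref{thm:dependence}, so Lemma~\ref{lem:key3} is indeed applicable.

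I would first reduce to a convenient form of $\mathcal X$. Replacing $\mathcal X$ by $\{(y,r)\in M^n\times M\;|\;r>c\text{ and }y\in X\langle r\rangle\}$, which is definable (it is, up to the coordinate permutation $(y,r)\mapsto(r,y)$, the intersection of $\mathcal X$ with $(c,\infty)\times M^n$), I may assume that $\mathcal X\subseteq M^n\times M$, that $\mathcal X\subseteq M^n\times(c,\infty)$, that the fiber of $\mathcal X$ over a point $y\in M^n$ is $\mathcal X_y=\{r>c\;|\;y\in X\langle r\rangle\}$, and that $X\langle r\rangle$ is recovered as the fiber of $\mathcal X$ over $r$ in the last coordinate for every $r>c$. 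The key observation is that each fiber $\mathcal X_y$ is upward closed in $(c,\infty)$: if $y\in X\langle r_0\rangle$ and $r\ge r_0$, then $y\in X\langle r\rangle$ because the family is increasing. Since $M$ is densely linearly ordered without endpoints, a nonempty upward closed subset of $(c,\infty)$ contains a nonempty open interval, hence has nonempty interior; and $\mathcal X_y\neq\emptyset$ precisely when $y\in X=\bigcup_{r>c}X\langle r\rangle$. Therefore the set $S=\{y\in M^n\;|\;\mathcal X_y\text{ has nonempty interior}\}$ coincides with $X$, which has nonempty interior by hypothesis.

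Lemma~\ref{lem:key3}, applied to $\mathcal X\subseteq M^n\times M$ with this $S$, then yields that $\mathcal X$ itself has nonempty interior, so $\mathcal X$ contains an open box $B\times(s,t)$ with $B$ an open box of $M^n$ and $c\le s<t$. Choosing any $r\in(s,t)$ --- which is possible by density and satisfies $r>c$ --- we get $B\times\{r\}\subseteq\mathcal X$, i.e.\ $B\subseteq X\langle r\rangle$, so $X\langle r\rangle$ has nonempty interior, as required. I do not anticipate a genuine obstacle here: the only real idea is the transposition of $\mathcal X$ so that its fibers become order-convex --- in fact upward closed --- after which the statement is essentially immediate from Lemma~\ref{lem:key3}. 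The sole points demanding a little care are the coordinate bookkeeping in the reduction and the elementary remark that a nonempty up-set in $(c,\infty)$ has nonempty interior, which uses density and the absence of endpoints.
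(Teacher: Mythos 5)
Your proof is correct, and it takes a genuinely different route from the paper's. The paper argues by induction on $n$: in the base case $n=1$ it assumes every $X\langle r\rangle$ has empty interior, hence is closed and discrete by Lemma~\ref{lem:local}, forms the definable set $Y=\{(r,x)\;|\;r=\inf\{s\;|\;x\in X\langle s\rangle\}\}$, proves $Y$ is discrete, and concludes that its projection $X$ is discrete by property (a); the inductive step for $n>1$ then combines Lemma~\ref{lem:key3} with the case $n-1$. Your transposition trick --- viewing $\mathcal X$ as a definable family over $M^n$ whose fibers live in the parameter line, noting that monotonicity of the family makes each nonempty fiber an up-set in $(c,\infty)$ and hence of nonempty interior, and then applying Lemma~\ref{lem:key3} exactly once --- collapses the entire argument, base case included, into a single step. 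Two observations. First, your argument invokes only properties (b) and (c) (through Lemma~\ref{lem:key3}), whereas the paper's base case uses property (a) directly; so you have in fact proved the proposition under the a priori weaker hypothesis that the structure enjoys (b) and (c). Second, the small points you flag are indeed the only ones needing verification --- definability of the transposed set, the fact that $B\times(s,t)\subseteq\mathcal X\subseteq M^n\times(c,\infty)$ forces $s\ge c$ so that any $r\in(s,t)$ satisfies $r>c$, and the use of density and the absence of endpoints to place an open interval inside a nonempty up-set --- and they all go through.
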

\begin{proof}
The properties (b) and (c) in Definition \ref{def:tame_top2} follow from the property (a) by Theorem \ref{thm:dependence}.
We use this fact.

We prove the proposition by induction on $n$.
We first consider the case in which $n=1$.
Assume that $X\langle r\rangle$ have empty interiors for all $r>c$.
They are closed and discrete by Lemma \ref{lem:local}.
Set $Y=\{(r,x) \in M^2\;|\; r=\inf\{s \in M\;|\; x \in X\langle s\rangle\}\}$.
The set $Y$ is discrete.
In fact, consider the fiber $Y_r$ of $Y$ at $r$.
Take $r' \in M$ with $r'>r$.
We have $Y_r \subset X\langle r'\rangle$ because $\{X\langle r\rangle\}_{r>c}$ is a parameterized increasing family.
For any $x \in M$, there exists an open interval $I$ containing the point $x$ such that $X\langle r'\rangle \cap I$ consists of at most one point because $X\langle r'\rangle$ is discrete and closed.
Since $Y_r \subset X\langle r'\rangle$ whenever $r<r'$, the intersection $Y \cap ((c,r') \times I)$ consists of at most one point.
We have shown that $Y$ is discrete.
Since $X$ is the projection image of $Y$, $X$ is also discrete by the property (a).
We have demonstrated that $X$ has an empty interior.

We next consider the case in which $n>1$.
Assume that $X$ has a nonempty interior.
An open box $B$ is contained in $X$.
We may assume that $X=B$ considering $X\langle r\rangle \cap  B$ instead of $X\langle r\rangle$.
We lead to a contradiction assuming that $X\langle r\rangle$ have empty interiors for all $r>c$.
Take an open box $B_1$ in $M^{n-1}$ and an open interval $I_1$ with $B=B_1 \times I_1$.
Set $Y\langle r\rangle=\{x \in B_1\;|\; (X\langle r\rangle)_x \text{ contains an open interval}\}$.
The set $Y\langle r\rangle$ has an empty interior by Lemma \ref{lem:key3}.
We have $B_1 \not= \bigcup_{r>c}Y\langle r\rangle$ by the induction hypothesis.
Take $x \in B_1 \setminus  \left(\bigcup_{r>c}Y\langle r\rangle\right)$.
The union $\bigcup_{r>c}(X\langle r\rangle)_x$ has an empty interior because the fibers $(X\langle r\rangle)_x$ have empty interiors.
It contradicts the equality $\bigcup_{r>c}(X\langle r\rangle)_x=I_1$.
\end{proof}

\subsection{Dependence between the properties and local monotonicity property}
The satisfaction of the property (c) in Definition \ref{def:tame_top2} is related to local monotonicity property. Two local monotonicity properties are known.
Let $\mathcal M=(M,\ldots)$ be a locally o-minimal structure.
The first one is the \textit{weak local monotonicity property} given below.
\begin{quotation}
Let $I$ be an interval and $f:I \rightarrow M$ be a definable function.
For any $(a,b) \in M^2$, there exist an open interval $J_1$ containing the point $a$, an open interval $J_2$ containing the point $b$ and a mutually disjoint definable partition $f^{-1}(J_2) \cap J_1=X_d \cup X_c \cup X_+ \cup X_-$ satisfying the following conditions:
\begin{enumerate}
\item[(1)] the definable set $X_d$ is discrete and closed;
\item[(2)] the definable set $X_c$ is open and $f$ is locally constant on $X_c$;
\item[(3)] the definable set $X_+$ is open and $f$ is locally strictly increasing and continuous on $X_+$;
\item[(4)] the definable set $X_-$ is open and $f$ is locally strictly decreasing and continuous on $X_-$.
\end{enumerate}
\end{quotation}
The \textit{strong local monotonicity property} is the same as the weak one except that we can take $J_1=I$ and $J_2=M$.

The weak local monotonicity property is possessed by strongly locally o-minimal structures \cite[Proposition 11]{KTTT} and by uniformly locally o-minimal structures of the second kind \cite[Corollary 3.1]{Fuji}.
A model of DCTC enjoys the strong local monotonicity property \cite[Theorem 3.2]{S}.
On the other hand, the strongly locally o-minimal structure given in \cite[Example 12]{KTTT} is not definably complete, and has neither the property (a)  in Definition \ref{def:tame_top}, the property (c) in Definition \ref{def:tame_top2} nor strong local monotonicity property.

We discuss on the dependence between the properties in Definition \ref{def:tame_top} and Definition \ref{def:tame_top2}.
We use the following technical definition in the proof.
\begin{definition}
Consider an expansion of densely linearly ordered structure without endpoints $\mathcal M=(M,<,\ldots)$.
Let $A$ be a definable subset of $M^m$ and $f:A \rightarrow M$ be a definable function.
Let $1 \leq i \leq m$.
The function $f$ is \textit{$i$-constant} if, for any $a_1, \ldots, a_{i-1},a_{i+1},\ldots, a_n \in M$, the univariate function $f(a_1,\ldots, a_{i-1}, x, a_{i+1},\ldots, a_n)$ is constant.
We define that the function is \textit{$i$-strictly increasing} and \textit{$i$-strictly decreasing} in the same way.
The function is \textit{$i$-strictly monotone} if it is $i$-constant, $i$-strictly increasing or $i$-strictly decreasing.
The function $f$ is \textit{$i$-continuous} if, for any $a_1, \ldots, a_{i-1},a_{i+1},\ldots, a_n \in M$, the univariate function $f(a_1,\ldots, a_{i-1}, x, a_{i+1},\ldots, a_n)$ is continuous.
\end{definition}

In the proof of the theorem, the claim that the structure in consideration possesses the property (a) is simply called the property (a).
\begin{theorem}\label{thm:dependence}
Consider a definably complete locally o-minimal structure $\mathcal M=(M,<,\ldots)$.
\begin{enumerate}
\item[(i)] The property (c) in Definition \ref{def:tame_top2} implies the property (b).
\item[(ii)] The property (a) in Definition \ref{def:tame_top} implies the strong local monotonicity property and the property (d).
\item[(iii)] The strong local monotonicity property implies the properties (b) and (c).
\item[(iv)] The properties (c) and (d) imply the property (a).
\end{enumerate}
\end{theorem}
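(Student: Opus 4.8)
The plan is to prove the four implications more or less independently: (i) and (iv) are short, the real work is in (ii) and (iii), and the auxiliary notions ($i$-constant, $i$-strictly increasing, and so on) are only needed inside (ii) and (iii). For (i), assume $X=X_1\cup X_2\subseteq M^m$ has nonempty interior while, for contradiction, neither $X_1$ nor $X_2$ does; pick distinct $c_0,c_1\in M$ and let $f\colon X\to M$ be the definable function equal to $c_0$ on $X_1$ and to $c_1$ on $X_2\setminus X_1$. By the property (c) there is a nonempty definable open $U\subseteq X$ on which $f$ is continuous; the two level sets of $f|_U$ are then closed in $U$, hence clopen, so $f$ is locally constant on $U$, and an open box on which $f$ is constant lies in $X_1$ or in $X_2$, a contradiction. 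For (iv), let $X\subseteq M^n$ be a nonempty definable discrete set and $\pi\colon M^n\to M^d$ a coordinate projection with $d\geq 1$ ($d=0$ being trivial), and suppose $\pi(X)$ is not discrete, so it has nonempty interior by Lemma \ref{lem:local} and contains an open box $B$. The fibers of $\pi|_X$ are subsets of the discrete set $X$, hence discrete, so the property (d) provides a definable section $\tau\colon\pi(X)\to X$ of $\pi$, and the property (c) applied to $\tau|_B$ gives a nonempty definable open $U\subseteq B$ with $\tau|_U$ continuous. Since $\tau$ is injective with continuous inverse $\pi|_{\tau(U)}$, the set $\tau(U)$ is homeomorphic to the nonempty open subset $U$ of $M^d$ and so has no isolated point; but $\tau(U)\subseteq X$ is discrete, a contradiction.

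For (ii), the implication (a)$\Rightarrow$(d) is handled by induction on the codimension $k=n-d$. When $k=1$ the fibers $X_x\subseteq M$ are discrete, hence closed and discrete by Lemma \ref{lem:local}; fixing $0\in M$ one may put $\tau(x)=\min(X_x\cap[0,\infty))$ when this set is nonempty and $\tau(x)=\max(X_x\cap(-\infty,0))$ otherwise, the extrema existing by definable completeness together with closedness. For $k>1$ one first applies the case $k=1$ to the projection dropping only the last coordinate (its fibers in $X$ being vertical slices of a discrete set, hence discrete) to obtain a definable section $\sigma$ of it, then observes that the fibers of the domain of $\sigma$ over the remaining projection are coordinate projections of the discrete fibers $X_x$, hence discrete by (a), so the induction hypothesis applies and one composes the two sections. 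The implication (a)$\Rightarrow$ strong local monotonicity is the heart of the theorem. Given a definable $f\colon I\to M$, let $X_c$, $X_+$, $X_-$ be the definable open subsets of $I$ consisting of the points around which $f$ is locally constant, respectively locally strictly increasing and continuous, respectively locally strictly decreasing and continuous, and put $X_d=I\setminus(X_c\cup X_+\cup X_-)$; all the conditions of the statement are immediate except that $X_d$ must be shown discrete. If it were not, it would contain an open interval $K$ by Lemma \ref{lem:local}, and I would contradict $K\subseteq X_d$ by exhibiting a subinterval of $K$ on which $f$ is constant, or strictly increasing and continuous, or strictly decreasing and continuous. Property (a) is exactly what makes this work: it forces the loci of discontinuity and of one-sided oscillation of $f$ on $K$ to be discrete, each such locus being a coordinate projection of an auxiliary definable subset of $M^2$ whose fiber over each value $t$ is the necessarily discrete boundary of a definable subset of $M$; once these loci are discrete, ordinary local o-minimality on a subinterval of $K$ avoiding them splits $f$ into constant, strictly increasing, and strictly decreasing parts, one of which has nonempty interior.

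For (iii), since (i) gives (c)$\Rightarrow$(b) it suffices to derive (c) from the strong local monotonicity property. One first reduces to $n=1$ by treating the coordinate functions of $f$ successively, and to $A$ an open box by passing inside $\myint(A)$, and then argues by induction on the number $m$ of variables. For each index $i$ in turn I would shrink the box so that $f$ becomes $i$-continuous and $i$-strictly monotone or $i$-constant on it, without disturbing the analogous properties already arranged for earlier indices, which survive passage to a subbox. The shrinking step applies strong local monotonicity to $f$ in the $i$-th variable, for each fixing of the others, to get a fiberwise discrete closed bad set; forms the definable set $Z_i$ whose fiber over each choice of the other coordinates is the complement of that bad set, hence has nonempty interior; invokes Lemma \ref{lem:key3}, viewed with the $i$-th coordinate as the single distinguished variable, whose proof then needs only the properties (b) and (c) in dimension $m-1$, available by the inductive hypothesis and part (i), so there is no circularity, to conclude that $Z_i$ has nonempty interior; and finally uses the property (b) to pass to a subbox on which $f$ is of one fixed type in the $i$-th variable. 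After $m$ such steps $f$ is $i$-continuous and $i$-strictly monotone or $i$-constant for every $i$ on an open box, and a further short induction on the number of variables shows it is then jointly continuous there: one changes coordinates one at a time and uses $i$-monotonicity to trap $f$ between its restrictions to two hyperplanes $x_i=\text{const}$, which are functions of fewer variables and continuous by the induction hypothesis.

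The main obstacle is part (ii), the discreteness of $X_d$ in the proof of strong local monotonicity from (a), and within it the step of identifying an auxiliary discrete definable set whose coordinate projection dominates the discontinuity and oscillation loci of $f$ on a candidate subinterval, so that the property (a) can be brought to bear; everything downstream of that is a routine, if lengthy, o-minimality-style argument.
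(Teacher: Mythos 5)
Your proofs of (i) and of the property (d) half of (ii) are correct and essentially the paper's (the paper merely cites \cite{Fuji} for (i), so your self-contained argument there is a welcome addition), and the overall architecture of (iii) is the paper's interleaved induction. But there are genuine gaps. The central one is the step you yourself flag as the main obstacle: in deriving strong local monotonicity from (a), you propose to show that the discontinuity and oscillation loci of $f$ on a candidate interval $K$ are discrete by exhibiting them as coordinate projections of an auxiliary definable set $E\subseteq M^2$ whose fibers $E_t$ are discrete boundaries of definable subsets of $M$. Property (a) applies only to sets that are themselves discrete; a definable subset of $M^2$ with discrete fibers need not be discrete (the diagonal is the obvious example), and its projection need not be discrete either, so this mechanism does not deliver the conclusion. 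Lemma \ref{lem:aaa} would rescue it only if the image under the other coordinate projection were also discrete, which it is not here. The paper sidesteps the issue entirely by citing the proof of \cite[Theorem 3.2]{S} with (a) substituted for \cite[Lemma 3.1i]{S}; since this is the hardest part of the theorem and all of (iii) depends on it, your proposal is incomplete precisely at its load-bearing point.

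Two smaller defects. In (iv) you apply Lemma \ref{lem:local} to $\pi(X)\subseteq M^d$ to get ``not discrete implies nonempty interior''; that lemma is only about subsets of $M$, and a non-discrete definable subset of $M^d$ with $d\ge 2$ can have empty interior. You need the paper's preliminary reduction to $d=1$ (if every one-coordinate projection of $\pi(X)$ is discrete, so is $\pi(X)$), after which your argument coincides with the paper's. In (iii) you declare that (b) need not be proved separately because (i) gives (c)$\Rightarrow$(b), but the final step of your shrinking argument applies (b) to the subsets $X_+$, $X_-$, $X_c$ of the $m$-dimensional box, i.e.\ (b) in dimension $m$, at a stage where the induction only supplies (c), and hence (b), in dimension $m-1$. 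As written this is circular; the paper avoids it by proving (b) in dimension $m$ directly from (b) and (c) in dimension $m-1$ (the $f,g$ infimum/supremum construction) before attacking (c) in dimension $m$, and you should restore that interleaving.
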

\begin{proof}
(i) We can prove it in the same manner as the proof of \cite[Theorem 3.3]{Fuji} using definable completeness instead of uniform local o-minimality of the second kind.
We omit the proof.

(ii) We can prove that the property (a) implies the strong local monotonicity property in the same manner as the proof of \cite[Theorem 3.2]{S} using the property (a) instead of \cite[Lemma 3.1i]{S}.
We omit the proof.

We next demonstrate the property (d).
We first demonstrate that the property (d) holds true when $n=d+1$.
We may assume that $\pi$ is the projection forgetting the first coordinate without loss of generality.
Take an arbitrary element $c \in M$.
The function $\eta:\pi(X) \rightarrow M$ is given by $\eta(x) = \inf\{x \in X_x\;|\; x \geq c\}$ if the definable set $\{x \in X_x\;|\; x \geq c\}$ is not empty and given by $\eta(x) = \sup\{x \in X_x\;|\; x \leq c\}$ otherwise.
It is a well-defined definable function.
The definable function $\tau:\pi(X) \rightarrow M^n$ is given by $\tau(x)=(\eta(x),x)$.
By Lemma \ref{lem:key0}, the fiber $X_x$ is closed for any $x \in \pi(X)$ by the assumption.
Therefore, we have $\tau(\pi(X)) \subset X$.
We have constructed the desired map.

We next show that the property (d) holds true by induction on $m=n-d$.
We may assume that $\pi$  is the projection onto the last $d$ coordinates without loss of generality.
 We have proven the case in which $m=1$.
 Consider the case in which $m>1$.
 Let $p:M^n \rightarrow M^{n-1}$ and $q:M^{n-1} \rightarrow M^d$ be the projection forgetting the first coordinate and the projection onto the last $d$ coordinates, respectively.
 We get $\pi=q \circ p$.
 The definable set $p(X) \cap q^{-1}(x)=p(\pi^{-1}(x) \cap X)$ is discrete by the property (a) for any $x \in \pi(X)$.
 Applying the induction hypothesis to $p$ and $q$, we can find definable maps $\tau_1: \pi(X) \rightarrow p(X)$ and $\tau_2:p(X) \rightarrow X$ such that the compositions $q \circ \tau_1$ and $p \circ \tau_2$ are identity maps.
 The composition $\tau=\tau_2 \circ \tau_1$ is the desired map.
 
 (iii) We demonstrate the properties (b) and (c) by induction on $m$ simultaneously.
The former is obvious because the structure $\mathcal M$ is locally o-minimal.
The property (c) follows from the strong local monotonicity property.

We consider the case in which $m>1$.
We first prove the property (b).
Assume that $X$ has a nonempty interior.
Take a bounded open box $B$ contained in $X$.
We may assume that $X=B$ considering $X_1 \cap B$ and $X_2 \cap B$ instead of $X_1$ and $X_2$, respectively.
We have $B=B_1 \times I_1$ for some open interval $I_1$ and an open box $B_1$ in $M^{m-1}$.
Set $Y_i=\{x \in B_1\;|\; \text{the fiber } (X_i)_x \text{ contains an open interval}\}$ for $i=1,2$.
Applying the property (b) in the case of $m=1$, we obtain $B_1 = Y_1 \cup Y_2$.
Applying the property (b) for $m-1$ to $B_1 = Y_1 \cup Y_2$, $Y_1$ or $Y_2$ has a nonempty interior.
We may assume that $\myint(Y_1) \not=\emptyset$ without loss of generality.
We may further assume that $Y_1=B_1$ shrinking $B$ if necessary.

Consider the function $f:B_1 \rightarrow \overline{I_1}$ given by 
\begin{align*}
f(x) &= \inf\{y \in I_1\;|\; \exists \alpha \in (X_1)_x,\ \exists \beta \in (X_1)_x \text{ such that } \alpha < y< \beta\\
& \text{ and } \forall y' \text{ with } \alpha<y'<\beta \text{, we have }y' \in (X_1)_x\}\text{.}
\end{align*}
Since $(X_1)_x$ contains an open interval and $\mathcal M$ is definably complete, the function $f$ is well-defined.
We next define the function $g:B_1 \rightarrow \overline{I_1}$ by 
\begin{align*}
g(x) &= \sup\{y \in I_1\;|\; y>f(x)  \text{ and } \forall y' \text{ with } f(x)<y'<y \text{, we have }y' \in (X_1)_x\}\text{.}
\end{align*}
The function $g$ is also well-defined for the same reason.
We have $f(x)<g(x)$ for all $x \in B_1$.
Apply the property (c) for $m-1$ to $f$ and $g$.
There exists an open box $V$ such that the restrictions of $f$ and $g$ to $V$ are continuous.
The definable set $X_1$ contains the open set $\{(x,y) \in V \times M\;|\; f(x)<y<g(x)\}$.
We have proven the property (b).

We next demonstrate the property (c).
We can prove the property (c) for arbitrary $n$ by an easy induction on $n$ when the property (c) holds true for $n=1$.
We may assume that $n=1$.
We may further assume that the domain of definition of $f$ is a bounded open box $B$ without loss of generality.
We define $I_1$ and $B_1$ in the same way as above.
Set 
\begin{align*}
X_+ &= \{(x,x') \in I_1 \times B_1\;|\; \text{the univariate function } f(\cdot, x') \text{ is }\\
& \quad \text{strictly increasing and continuous on a neighborhood of }x\}\text{,}\\
X_- &= \{(x,x') \in I_1 \times B_1\;|\; \text{the univariate function } f(\cdot, x') \text{ is }\\
& \quad \text{strictly decreasing and continuous on a neighborhood of }x\}\text{,}\\
X_c &= \{(x,x') \in I_1 \times B_1\;|\; \text{the univariate function } f(\cdot, x') \text{ is }\\
& \quad \text{constant on a neighborhood of }x\}\text{ and }\\
X_p &= B \setminus (X_+ \cup X_- \cup X_c) \text{.}
\end{align*}
The fibers $(X_p)_x$ are discrete for all $x \in B_1$ by the strong local monotonicity property.
In particular, $X_p$ has an empty interior.
At least one of $X_+$, $X_-$ and $X_c$ has a nonempty interior by the property (b) we have just proven.
Therefore, we may assume that $f$ is $1$-strictly monotone and $1$-continuous by considering an open box contained in one of them instead of $B$.
Applying the same argument $(m-1)$-times, we may assume that $f$ is $i$-strictly monotone and $i$-continuous for all $1 \leq i \leq m$.
The function $f$ is continuous on $B$ by \cite[Lemma 3.2.16]{vdD}.
We have proven the property (c).
 
 (iv) Let $X$ be a discrete definable subset of $M^n$.
 Let $\pi:M^n \rightarrow M^d$ be a coordinate projection.
 We prove that $\pi(X)$ is discrete.
 We first reduce to the case in which $d=1$.
 Assume that the claim is true for $d=1$.
 Take an arbitrary point $x \in \pi(X)$.
 Let $p_i:M^d \rightarrow M$ be the projection onto the $i$-th coordinate for $1 \leq i \leq d$.
 Since the composition $p_i \circ \pi$ is a coordinate projection, $p_i(\pi(X))$ is discrete.
 We can take an open interval $I_i$ such that $I_i \cap p_i(\pi(X)) = \{p_i(x)\}$.
 It is obvious that $\pi(X) \cap (I_1 \times \cdots \times I_d) = \{x\}$.
 It means that $\pi(X)$ is discrete.
 We have reduced to the case in which $d=1$.
 
 When $d=1$, there exists a definable map $\tau:\pi(X) \rightarrow X$ such that the composition $\pi \circ \tau$ is an identity map by the property (d).
 If $\pi(X)$ is not discrete, it contains an open interval $I$ because of local o-minimality.
Shrinking the interval $I$ if necessary, the restriction of $\tau$ to $I$ is continuous by the property (c).
It means that $X$ contains the graph of a continuous map defined on an open interval.
It contradicts the assumption that $X$ is discrete.
\end{proof}

\subsection{Uniformly locally o-minimal structure of the second kind}
We consider DCULOAS structures.
They were first introduced in \cite{Fuji} and their properties were also investigated in \cite{Fuji2}.
Their significant feature is that local definable cell decomposition for them is available.
We first review the definition of a locally o-minimal structure of the second kind.
\begin{definition}[\cite{Fuji}]
A locally o-minimal structure $\mathcal M=(M,<,\ldots)$ is a \textit{uniformly locally o-minimal structure of the second kind} if, for any positive integer $n$, any definable set $X \subset M^{n+1}$, $a \in M$ and $b \in M^n$, there exist an open interval $I$ containing the point $a$ and an open box $B$ containing $b$ such that the definable sets $X_y \cap I$ are finite unions of points and open intervals for all $y \in B$.
\end{definition}

We want to demonstrate that a DCULOAS structure enjoys the properties (a) through (d) in Definition \ref{def:tame_top} and Definition \ref{def:tame_top2}.

\begin{proposition}\label{prop:second}
A DCULOAS structure enjoys the properties (a) through (d) in Definition \ref{def:tame_top} and Definition \ref{def:tame_top2}.
\end{proposition}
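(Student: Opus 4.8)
The plan is to reduce the whole statement to the single property~(a) and then invoke Theorem~\ref{thm:dependence}. A DCULOAS structure is in particular a definably complete locally o-minimal structure, so Theorem~\ref{thm:dependence} applies to it. Hence it suffices to verify that such a structure satisfies property~(a) in Definition~\ref{def:tame_top}: once this is known, Theorem~\ref{thm:dependence}(ii) yields the strong local monotonicity property and property~(d), and Theorem~\ref{thm:dependence}(iii) promotes the strong local monotonicity property to properties~(b) and~(c). Thus all of (a)--(d) follow, and the only real content is property~(a).

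To prove property~(a) I would appeal to the dimension theory already established for DCULOAS structures. By the local definable cell decomposition theorem \cite[Theorem 4.2]{Fuji}, every definable set is, locally around each of its points, a finite union of cells; from this one extracts the two facts recorded in \cite[Section 5]{Fuji} and \cite[Theorem 1.1, Corollary 1.2]{Fuji2} that we need, namely that a discrete definable set has dimension $0$ and that $\dim \pi(X) \le \dim X$ for every coordinate projection $\pi$. Consequently, if $X$ is a nonempty discrete definable set and $\pi$ is a coordinate projection, then $\dim \pi(X) \le \dim X = 0$.

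It then remains to check that a definable set $Z \subset M^d$ with $\dim Z = 0$ is discrete, and for this I would again use \cite[Theorem 4.2]{Fuji}: given $z \in Z$, a local cell decomposition near $z$ exhibits $Z$ on some open box $B \ni z$ as a finite union of cells, each necessarily of dimension $0$, hence a finite union of points; shrinking $B$ gives $Z \cap B = \{z\}$. Applying this with $Z = \pi(X)$ shows that $\pi(X)$ is discrete, which is property~(a).

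The proof is essentially assembly of results from \cite{Fuji,Fuji2} together with Theorem~\ref{thm:dependence}, so no step is genuinely hard; the one place calling for slight care is the last one, where one must pass from ``$\dim = 0$'' to ``discrete'' by appealing to the cell decomposition itself, since a zero-dimensional definable set need not be discrete in an arbitrary locally o-minimal structure. Alternatively, one could first establish property~(a) only for projections onto a single coordinate via Lemma~\ref{lem:local} --- a dimension-zero definable subset of $M$ has empty interior, hence is closed and discrete --- and then deduce the general case exactly as in the reduction to $d=1$ carried out in the proof of Theorem~\ref{thm:dependence}(iv).
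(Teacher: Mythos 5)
Your proposal is correct and follows essentially the same route as the paper: reduce everything to property (a) via Theorem \ref{thm:dependence}, then obtain (a) from the dimension theory of \cite{Fuji,Fuji2} (discrete implies dimension zero, projections preserve dimension zero, and dimension zero implies discrete, the last being \cite[Corollary 5.3]{Fuji}, which you re-derive from local cell decomposition). The only point the paper makes explicit that you leave implicit is that the notion of dimension used here is that of \cite[Definition 5.1]{Fuji}, which is only shown to agree with Definition \ref{def:dim} afterwards (Theorem \ref{cor:alternative}); your argument is consistent on this, so no harm done.
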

\begin{proof}
We have only to demonstrate the property (a) by Theorem \ref{thm:dependence}.
We temporarily employ a definition of dimension different from Definition \ref{def:dim}.
The dimension considered here is that given in \cite[Definition 5.1]{Fuji}.
The above two definitions coincide by Theorem \ref{cor:alternative} once we obtain this proposition.

A discrete definable set is of dimension zero by \cite[Lemma 5.2]{Fuji}.
The projection image of the set of dimension zero is again of dimension zero by \cite[Theorem 1.1]{Fuji2}.
It is discrete by \cite[Corollary 5.3]{Fuji}.
We have demonstrated the property (a).
\end{proof}

\subsection{Model of DCTC}\label{sec:dctc}
Schoutens tried to figure out the common features of the models of the theory of all o-minimal structures \cite{S}.
A model of DCTC was introduced in his study.
He demonstrated tame topological properties enjoyed by it in \cite{S}.
The following is the definition of a model of DCTC.
\begin{definition}[\cite{S}]
A structure $\mathcal M=(M,<,\ldots)$ is a \textit{model of DCTC} if it is a definably complete expansion of a densely linearly ordered structure without endpoints with type completeness property.
A structure enjoys type completeness property by definition if the types $a^{+}$ and $a^{-}$ are complete for any $a \in M \cup \{\pm \infty\}$.
Here, a definable set $Y \subset M$ belongs to $a^+$ if there exists $b \in M$ with $b >a$ and $(a,b) \subset Y$.
We define $a^-$ similarly.
For instance, any definably complete locally o-minimal expansion of an ordered field, which is investigated in \cite{F}, is a model of DCTC.
\end{definition}

We demonstrate that a model of DCTC enjoys the properties in Definition \ref{def:tame_top} and Definition \ref{def:tame_top2}.

\begin{proposition}\label{prop:dctc}
A model of DCTC is a definably complete locally o-minimal structure enjoying the properties (a) through (d) in Definition \ref{def:tame_top} and Definition \ref{def:tame_top2}.
\end{proposition}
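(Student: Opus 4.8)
The plan is to check, in turn, that a model $\mathcal M$ of DCTC is definably complete, locally o-minimal and satisfies the strong local monotonicity property, and then to establish the property (a); the remaining properties (b), (c) and (d) will follow at once from Theorem \ref{thm:dependence}. Definable completeness and dense linear order without endpoints are part of the definition of DCTC. For local o-minimality, fix a definable set $X \subseteq M$ and a point $a \in M$: since the type $a^{+}$ is complete there is $b > a$ with $(a,b) \subseteq X$ or $(a,b) \cap X = \emptyset$, and since $a^{-}$ is complete there is $b' < a$ with $(b',a) \subseteq X$ or $(b',a) \cap X = \emptyset$; together with the knowledge of whether $a \in X$, this shows that $X \cap (b',b)$ is a finite union of points and open intervals. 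The strong local monotonicity property is \cite[Theorem 3.2]{S}. Once the property (a) is proved, Theorem \ref{thm:dependence}(ii) gives the property (d) (and re-derives strong local monotonicity), and Theorem \ref{thm:dependence}(iii) gives the properties (b) and (c). So the whole task reduces to proving the property (a).

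I would prove the property (a) by induction on the ambient dimension $n$, carrying along the auxiliary assertion $(\ast_n)$: \emph{every non-discrete definable subset of $M^n$ contains the image of a definable continuous injective map defined on an open interval.} Such an image is never discrete, since by continuity and injectivity each of its points is an accumulation point of it; hence $(\ast_n)$ automatically fails for discrete definable sets, and this is the bridge between the two statements. The base case $n = 1$ of $(\ast_n)$ is immediate from local o-minimality. For the inductive step of $(\ast_n)$, take a non-discrete definable $Y \subseteq M^n$ and a coordinate projection $\rho$ onto $n-1$ of the coordinates. A routine argument shows that a definable subset of $M^n$ whose image and all fibres under a coordinate projection are discrete is itself discrete; so if $\rho(Y)$ is discrete, the non-discreteness of $Y$ forces some fibre of $\rho|_Y$ to be a non-discrete subset of $M$, which by local o-minimality contains an open interval, and we are done. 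If $\rho(Y)$ is non-discrete, apply $(\ast_{n-1})$ to get a definable continuous injective curve $\gamma$ in $\rho(Y)$; if along $\gamma$ some fibre $Y_{\gamma(t_0)}$ is non-discrete we are done as before, and otherwise each fibre $Y_{\gamma(t)} \subseteq M$ is discrete, hence closed by Lemma \ref{lem:local}. Then, after shrinking the domain of $\gamma$ (using definable completeness and Lemma \ref{lem:local}) one may define a point $\delta(t)$ of $Y_{\gamma(t)}$ as an infimum or supremum relative to a fixed $c \in M$, and after a further shrinking, invoking the strong local monotonicity property for the definable function $\delta$, one may assume $\delta$ continuous; then $t \mapsto (\gamma(t),\delta(t))$, with the coordinates arranged appropriately, is a definable continuous injective curve in $Y$.

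The property (a) itself is handled in the same spirit. A general coordinate projection factors through one that forgets a single coordinate, so by $(\ast_{n-1})$ and the inductive hypothesis for the property (a) in dimension $n-1$ it suffices to treat $\pi \colon M^n \to M^{n-1}$ forgetting one coordinate and to show that $\pi(X)$ is discrete for discrete definable $X$. If $\pi(X)$ were not discrete, $(\ast_{n-1})$ would produce a definable continuous injective curve $\gamma$ in $\pi(X)$; since the fibres of $\pi$ over the points of $X$ are discrete subsets of $M$, hence closed by Lemma \ref{lem:local}, the very same lifting as above --- pick a point of each fibre as an infimum or supremum relative to a fixed element, and shrink the interval so that it varies continuously, by strong local monotonicity --- yields a definable continuous injective curve inside $X$, which is not discrete, contradicting the discreteness of $X$. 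The crux, and the place I expect the most care, is exactly this lifting: one must upgrade a merely definable choice of a point in each fibre to a continuous one, and the strong local monotonicity property is the tool that makes this possible, for it returns a subinterval on which the chosen point is constant, strictly increasing, or strictly decreasing, and all three alternatives are continuous, which is all that is needed to create an accumulation point. This organises into an induction reaching all dimensions the phenomenon that Schoutens established in the plane in \cite{S}.
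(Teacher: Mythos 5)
Your proposal is correct, but it takes a genuinely different route from the paper for the two substantive points. The paper's proof is essentially a three-line citation: local o-minimality is obtained from Lemma \ref{lem:local} together with \cite[Proposition 2.6]{S}, and the property (a) is exactly \cite[Corollary 4.3]{S}; only the deduction of (b)--(d) via Theorem \ref{thm:dependence} is carried out in the paper, and there you agree. You instead re-prove both imported facts: local o-minimality directly from completeness of the types $a^{+}$ and $a^{-}$ (which is fine and arguably cleaner than routing through Lemma \ref{lem:local}), and the property (a) by an induction on the ambient dimension carried along with the curve-selection statement $(\ast_n)$, using only \cite[Theorem 3.2]{S} (strong local monotonicity) as external input. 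I checked the moving parts of your induction and they hold up: the ``routine argument'' for a projection forgetting one coordinate is exactly the second half of the paper's proof of Lemma \ref{lem:aaa} and does not presuppose the property (a); the fibers you lift over are discrete definable subsets of $M$, hence closed by Lemma \ref{lem:local} alone, so the inf/sup selection lands in the fiber (this is the same device as the $\eta$ constructed in Theorem \ref{thm:dependence}(ii)); and strong local monotonicity does yield a subinterval of continuity because the exceptional set $X_d$ is discrete while the three good pieces are open. What your approach buys is self-containedness --- a reader need not unpack Schoutens' Corollary 4.3 --- at the cost of duplicating machinery (your $(\ast_n)$ is close in spirit to Theorem \ref{cor:alternative}); what the paper's approach buys is brevity. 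The only blemishes are cosmetic: the factorization of a general coordinate projection into single-coordinate-forgetting ones needs only the inductive hypothesis for (a), not $(\ast_{n-1})$, and a fully written version would have to spell out the definability of $\delta$ and the case split in its definition.
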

\begin{proof}
A model of DCTC is definably complete by the definition.
It is also a locally o-minimal structure by Lemma \ref{lem:local} and \cite[Proposition 2.6]{S}.
The property (a) is \cite[Corollary 4.3]{S}.
The properties (b) through (d) follow from Theorem \ref{thm:dependence}.
\end{proof}

\begin{corollary}\label{cor:dctc}
A definably complete locally o-minimal expansion of a field possesses the properties (a) through (d) in Definition \ref{def:tame_top} and Definition \ref{def:tame_top2}.
\end{corollary}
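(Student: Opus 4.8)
The plan is to reduce the corollary to Proposition~\ref{prop:dctc}. By that proposition it suffices to show that a definably complete locally o-minimal expansion of a field is a model of DCTC, since then the properties (a)--(d) in Definition~\ref{def:tame_top} and Definition~\ref{def:tame_top2} come for free. As $\mathcal M$ is a densely linearly ordered expansion of a field, the order is (up to the usual identification) the field order, so $\mathcal M$ is a definably complete locally o-minimal expansion of an ordered field; this is exactly the setting of \cite{F}, and being a model of DCTC then amounts to checking type completeness, i.e. that $a^+$ and $a^-$ are complete for every $a\in M\cup\{\pm\infty\}$.

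For $a\in M$ this follows from local o-minimality alone, with no use of definable completeness or the field structure. Given a definable $Y\subseteq M$, choose an open interval $I\ni a$ with $Y\cap I$ a finite union of points and open intervals. Looking at the part of $I$ strictly to the right of $a$: since there are only finitely many isolated points, $a$ is a right limit point of $Y\cap I$ only through one of the finitely many intervals, so either $(a,b)\subseteq Y$ or $(a,b)\cap Y=\emptyset$ for some $b>a$; hence $a^+$ decides $Y$. The same argument on the left shows $a^-$ is complete.

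The remaining types $(+\infty)^-$ and $(-\infty)^+$ are where the field structure is genuinely needed, and this is the only real obstacle. Here I would transport the finite case along the definable bijection $x\mapsto x^{-1}$ of $\{x\in M\;|\;x>0\}$: for a definable $Y\subseteq M$, the set $Y'=\{x^{-1}\;|\;x\in Y,\ x>0\}$ is definable, and for $b>0$ one has $(b,+\infty)\subseteq Y$ iff $(0,b^{-1})\subseteq Y'$, and $(b,+\infty)\cap Y=\emptyset$ iff $(0,b^{-1})\cap Y'=\emptyset$. Thus completeness of $(+\infty)^-$ on $Y$ reduces to completeness of $0^+$ on $Y'$, which we just established; the case $(-\infty)^+$ is symmetric. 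Therefore $\mathcal M$ has type completeness, hence is a model of DCTC, and Proposition~\ref{prop:dctc} yields the properties (a)--(d).
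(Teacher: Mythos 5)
Your proof is correct and takes essentially the same route as the paper: reduce to Proposition \ref{prop:dctc} by showing that the structure is a model of DCTC. The paper simply cites the fact that a definably complete locally o-minimal expansion of an ordered field is a model of DCTC (it is stated as an example in the definition of DCTC, with a pointer to \cite{F}), whereas you verify type completeness explicitly --- local o-minimality at finite points and the inversion map $x\mapsto x^{-1}$ at $\pm\infty$ --- and both verifications are sound.
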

\begin{proof}
The corollary follows from Proposition \ref{prop:dctc} because a definably complete locally o-minimal expansion of a field is a model of DCTC.
\end{proof}

\section{Dimension theory}\label{sec:basic}

We develop a dimension theory for locally o-minimal structures possessing the property (a) in Definition \ref{def:tame_top}.
The properties (b) through (d) in Definition \ref{def:tame_top2} follow from the property (a) in Definition \ref{def:tame_top} by Theorem \ref{thm:dependence}.
We use this fact without notification in the rest of this paper.

\begin{definition}[Dimension]\label{def:dim}
Consider an expansion of a densely linearly order without endpoints $\mathcal M=(M,<,\ldots)$.
Let $X$ be a nonempty definable subset of $M^n$.
The dimension of $X$ is the maximal nonnegative integer $d$ such that $\pi(X)$ has a nonempty interior for some coordinate projection $\pi:M^n \rightarrow M^d$.
We set $\dim(X)=-\infty$ when $X$ is an empty set.
\end{definition}

A definable set of dimension zero is always closed and discrete.

\begin{proposition}\label{prop:zero}
Consider a locally o-minimal structure satisfying the property (a) in Definition \ref{def:tame_top}.
A definable set is of dimension zero if and only if it is discrete.
When it is of dimension zero, it is also closed.
\end{proposition}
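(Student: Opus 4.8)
The plan is to deduce the statement from the one-dimensional dichotomy of Lemma~\ref{lem:local} and an elementary product-of-intervals argument, together with the stability of property~(a) under coordinate projections. Throughout I take $X$ to be a nonempty definable subset of $M^n$; the empty set is harmlessly set aside, since $\dim\emptyset=-\infty$.

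For the implication ``$\dim X=0\Rightarrow X$ discrete and closed'', note first that by Definition~\ref{def:dim} the hypothesis forces $\pi_k(X)$ to have empty interior for every coordinate projection $\pi_k\colon M^n\to M$ onto a single coordinate, $1\le k\le n$; hence each $\pi_k(X)$ is closed and discrete by Lemma~\ref{lem:local}. Now fix $x\in X$, and for each $k$ choose an open interval $I_k\ni\pi_k(x)$ with $\pi_k(X)\cap I_k=\{\pi_k(x)\}$, which is possible since $\pi_k(X)$ is discrete and contains $\pi_k(x)$. If $y\in X\cap(I_1\times\cdots\times I_n)$ then $\pi_k(y)\in\pi_k(X)\cap I_k=\{\pi_k(x)\}$ for each $k$, whence $y=x$; so the open box $I_1\times\cdots\times I_n$ meets $X$ only in $x$, and $x$ is isolated in $X$. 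As $x$ was arbitrary, $X$ is discrete, and then it is closed by Lemma~\ref{lem:key0}.

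For the converse, suppose $X$ is discrete. A coordinate projection $\pi\colon M^n\to M^d$ factors as a chain of $n-d$ single-coordinate projections, each of which sends a discrete definable set to a discrete definable set by property~(a) in Definition~\ref{def:tame_top}, so $\pi(X)$ is discrete. When $d\ge1$, a nonempty discrete definable subset of $M^d$ cannot have nonempty interior: since $M$ is densely linearly ordered without endpoints, every neighbourhood of a point of an open box $B\subseteq M^d$ contains further points of $B$, so $B$ has no isolated point and cannot be contained in a discrete set. Hence $\pi(X)$ has empty interior for all $d\ge1$, so $\dim X\le0$, while $\dim X\ge0$ because $X\ne\emptyset$; thus $\dim X=0$. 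I do not expect a genuine obstacle here: the work is foundational, and the only care needed is to keep straight which hypothesis underlies each cited result---definable completeness behind Lemma~\ref{lem:local} and hence Lemma~\ref{lem:key0}, and property~(a) applied one coordinate at a time---and to exclude the empty set at the outset.
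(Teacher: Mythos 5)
Your proof is correct and follows essentially the same route as the paper: the paper's own (terser) argument likewise reduces everything to the single-coordinate projections, using property (a) for the ``discrete $\Rightarrow$ dimension zero'' direction and the box-of-intervals argument (spelled out in the proof of Lemma \ref{lem:key0}) for the converse, with closedness supplied by Lemma \ref{lem:key0}. The only caveat --- that Lemmas \ref{lem:local} and \ref{lem:key0} are formally stated under definable completeness while the proposition omits it --- is shared by the paper's own proof, and the direction of Lemma \ref{lem:local} you actually invoke follows from local o-minimality alone.
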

\begin{proof}
Let $X$ be a definable subset of $M^n$.
The definable set $X$ is discrete if and only if the projection image $\pi(X)$ has an empty interior for all the coordinate projections $\pi:M^n \rightarrow M$ by the property (a).
Therefore, $X$ is discrete if and only if $\dim X=0$.
A discrete definable set is always closed by Lemma \ref{lem:key0}.
\end{proof}

The following two lemmas are key lemmas of this paper.
\begin{lemma}\label{lem:pre0}
Consider a definably complete locally o-minimal structure $\mathcal M=(M,<,\ldots)$ enjoying the properties (b) and (c) in Definition \ref{def:tame_top2}.
Let $X$ be a definable subset of $M^n$ of dimension $d$ and $\pi:M^n \rightarrow M^d$ be a coordinate projection such that the projection image $\pi(X)$ has a nonempty interior.
There exists a definable open subset $U$ of $M^d$ contained in $\pi(X)$ such that the fibers $X \cap \pi^{-1}(x)$ are discrete for all $x \in U$. 
\end{lemma}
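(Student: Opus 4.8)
The plan is to argue by contradiction: suppose no such open $U$ exists. Let $d = \dim X$ and let $\pi:M^n \to M^d$ be a coordinate projection with $\myint(\pi(X)) \neq \emptyset$. Set
\[
S = \{x \in \pi(X) \;|\; \text{the fiber } X \cap \pi^{-1}(x) \text{ is not discrete}\}\text{.}
\]
The failure of the conclusion means that $S$ is dense in $\pi(X)$ in the following strong sense: every nonempty definable open subset of $\pi(X)$ meets $S$. The key step is to promote this to the statement that $S$ itself has a nonempty interior. To do this I would use the local o-minimality of $\mathcal M$ applied to the fibers: for each $x \in S$, the fiber $X \cap \pi^{-1}(x) \subseteq M^{n-d}$ is definable and not discrete, so by Proposition~\ref{prop:zero} it has dimension at least $1$, i.e.\ some coordinate projection $\rho:M^{n-d} \to M$ sends it onto a set with nonempty interior. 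Partitioning $S$ according to which coordinate works, and (after a further partition) according to which open interval with endpoints in a definable Skolem-type selection the fiber contains, one reduces to the situation handled by Lemma~\ref{lem:key3}.

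More precisely, the main move is as follows. Let $\sigma:M^n \to M^{d+1}$ be the projection onto the $d$ coordinates of $\pi$ together with one extra coordinate, say the $j$-th, and set $Z = \sigma(X) \subseteq M^{d+1}$, viewed as a subset of $M^d \times M$. Consider
\[
S_j = \{x \in M^d \;|\; \text{the fiber } Z_x \text{ has a nonempty interior}\}\text{.}
\]
By Lemma~\ref{lem:key3} applied to $Z$ (with $m = d$, $n = 1$), if $S_j$ has a nonempty interior then $Z$ has a nonempty interior, hence $\dim Z = d+1$; but $Z$ is a coordinate projection image of $X$, so $\dim Z \leq d$ once we have the dimension monotonicity for projections — which at this point is not yet available (Theorem~\ref{thm:dim}(5) comes later). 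Therefore I would instead argue directly: $Z = \sigma(X)$ and $S_j$ having nonempty interior forces, via Lemma~\ref{lem:key3}, that $Z \subseteq M^{d+1}$ has nonempty interior, and then composing with the projection $M^{d+1} \to M^d$ forgetting the last coordinate shows $\pi(X) \supseteq (\text{that open set's projection})$, which is fine, but the real contradiction is that we would then also get a coordinate projection of $X$ onto an open subset of $M^{d+1}$, contradicting $\dim X = d$ directly from Definition~\ref{def:dim}. So $S_j$ has empty interior for every $j$.

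Now the finish: for each $x \in S$, the non-discrete fiber $X \cap \pi^{-1}(x)$ has, by local o-minimality of the structure restricted to that fiber, a coordinate direction $j$ in which its image contains an interval, so $x \in S_j$ for some $j \in \{1,\dots,n\}\setminus(\text{coords of }\pi)$. Hence $S \subseteq \bigcup_j S_j$, a finite union of definable sets each with empty interior. By the property (b) in Definition~\ref{def:tame_top2} (extended to finite unions by an obvious induction), $S$ has empty interior. But $\pi(X)$ is open and $\pi(X) \setminus S$ is exactly the set of $x$ with discrete fiber; since $S$ has empty interior, $\pi(X) \setminus S$ is dense in $\pi(X)$, and I claim it contains an open set. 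Indeed, $\pi(X)\setminus S$ is definable and has nonempty interior precisely because its complement in the open set $\pi(X)$ has empty interior — here one invokes Lemma~\ref{lem:local}: the definable set $S \cap \myint(\pi(X))$, having empty interior, is closed and discrete, so removing it from the open box inside $\pi(X)$ leaves a definable set containing an open box. Taking $U$ to be such an open box completes the proof.

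The main obstacle I anticipate is the reduction "non-discrete fiber $\Rightarrow$ contains an interval in some coordinate direction": discreteness of $X\cap\pi^{-1}(x)$ is a statement about $M^{n-d}$, and one needs that a non-discrete definable subset of $M^{n-d}$ has a coordinate projection onto a set with nonempty interior. This is exactly Proposition~\ref{prop:zero} together with Definition~\ref{def:dim} (a non-discrete set has dimension $\geq 1$), so it is available, but care is needed to do this uniformly in $x$ — i.e.\ to partition $S$ into finitely many definable pieces on each of which a fixed coordinate $j$ works — which is where one uses definability of the family of fibers plus the property (b) to pass to a piece of positive-interior-or-nothing dichotomy. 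Packaging this uniformity cleanly is the only delicate point; the rest is bookkeeping with Lemma~\ref{lem:key3} and Lemma~\ref{lem:local}.
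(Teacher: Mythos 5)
Your proposal is correct and follows essentially the same route as the paper: define $S$ as the set of base points with non-discrete fiber, write $S\subseteq\bigcup_j S_j$, kill each $S_j$ by applying Lemma~\ref{lem:key3} to the projection of $X$ onto $M^{d+1}$ (contradicting $\dim X=d$ directly from Definition~\ref{def:dim}), and then extract $U$ from $\pi(X)=S\cup(\pi(X)\setminus S)$ via property (b). The one misstep is your final appeal to Lemma~\ref{lem:local}: that lemma concerns definable subsets of $M$ only, and ``empty interior implies closed and discrete'' is false for definable subsets of $M^d$ with $d>1$ (e.g.\ a hyperplane); but this detour is unnecessary, since property (b) applied to the decomposition $\pi(X)=S\cup(\pi(X)\setminus S)$ already gives that $\pi(X)\setminus S$ has nonempty interior, which is exactly how the paper concludes.
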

\begin{proof}
Permuting the coordinates if necessary, we may assume that $\pi$ is the projection onto the first $d$ coordinates.
Set $$S=\{x \in \pi(X)\;|\; \text{the fiber }X \cap \pi^{-1}(x) \text{ is not discrete}\}\text{.}$$
We have $S=\{x \in \pi(X)\;|\; \dim(X \cap \pi^{-1}(x))>0\}$ by Proposition \ref{prop:zero}.
We want to show that $S$ has an empty interior.
Assume the contrary.
Let $\rho_j:M^n \rightarrow M$ be the coordinate projections onto the $j$-th coordinate for all $d<j \leq n$.
Set $$S_j=\{x \in \pi(X)\;|\; \rho_j(X \cap \pi^{-1}(x)) \text{ contains an open interval}\}\text{.}$$
We have $S=\bigcup_{d<j \leq n}S_j$ by the definition of dimension.
The definable set $S_j$ has a nonempty interior by the property (b) for some $d<j \leq n$.
Fix such $j$.
Let $\Pi:M^n \rightarrow M^{d+1}$ be the coordinate projection given by $\Pi(x)=(\pi(x),\rho_j(x))$.
The definable set $T=\{x \in M^d \;|\; \text{the fiber }(\Pi(X))_x \text{ contains an open interval}\}$ contains $S_j$ and it has a nonempty interior.
Therefore, the projection image $\Pi(X)$ has a nonempty interior by Lemma \ref{lem:key3}.
It contradicts the assumption that $\dim(X)=d$.
We have shown that $S$ has an empty interior.
Since $\pi(X)$ has a nonempty interior, there exists a definable open subset $U$ of $\pi(X)$ with $U \cap S=\emptyset$ by the property (b).
\end{proof}

\begin{lemma}\label{lem:pre1}
Consider a definably complete locally o-minimal structure $\mathcal M=(M,<,\ldots)$ enjoying the property (a) in Definition \ref{def:tame_top}.
Let $X \subset Y$ be definable subsets of $M^n$.
Assume that there exist a coordinate projection $\pi:M^n \rightarrow M^d$ and a definable open subset $U$ of $M^d$ contained in $\pi(X)$ such that the fibers $Y_x$ are discrete for all $x \in U$. 
Then, there exist
\begin{itemize}
\item a definable open subset $V$ of $U$,
\item a definable open subset $W$ of $M^n$ and 
\item a definable continuous map $f:V \rightarrow X$
\end{itemize}
such that 
\begin{itemize}
\item $\pi(W)=V$, 
\item $Y \cap W = f(V)$ and 
\item the composition $\pi \circ f$ is the identity map on $V$.  
\end{itemize}
\end{lemma}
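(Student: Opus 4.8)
The plan is to induct on $m = n - d$, the number of ``vertical'' coordinates. When $m = 0$ the map $\pi$ is (a coordinate permutation of) the identity, so $U \subseteq \pi(X) \subseteq \pi(Y)$ forces $U \subseteq Y$, and one takes $V = W = U$ with $f$ the inclusion.

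The heart of the argument is the case $m = 1$ (so $n = d+1$; take $\pi$ to be the projection onto the first $d$ coordinates). First I would produce a continuous section landing in $X$: since $X_x \subseteq Y_x$ is discrete for every $x \in U$ and $\pi(X \cap \pi^{-1}(U)) = U$, property (d) yields a definable section $\sigma : U \to X$ of $\pi$, and property (c) makes $\sigma$ continuous after shrinking $U$ to an open box $V_0$. Write $\sigma(x) = (x, h(x))$ with $h : V_0 \to M$ definable and continuous. For $x \in V_0$ the fiber $Y_x$ is discrete, hence closed by Lemma \ref{lem:key0}; using definable completeness I set $a(x) = \sup\bigl(\{t \in Y_x : t < h(x)\} \cup \{-\infty\}\bigr)$ and $b(x) = \inf\bigl(\{t \in Y_x : t > h(x)\} \cup \{+\infty\}\bigr)$, and closedness plus discreteness of $Y_x$ give $a(x) < h(x) < b(x)$ and $Y_x \cap (a(x),b(x)) = \{h(x)\}$. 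Regarding $a$ and $b$ as definable functions into $M \cup \{-\infty\}$ and $M \cup \{+\infty\}$, properties (b) and (c) let me pass to an open box $V \subseteq V_0$ on which $a$ and $b$ are continuous (or identically $\pm\infty$). Then $W = \{(x, y) : x \in V,\ a(x) < y < b(x)\}$ is open in $M^{d+1}$, $\pi(W) = V$, and $Y \cap W = \{(x, h(x)) : x \in V\} = \sigma(V)$, so $f = \sigma|_V$, $V$, $W$ work.

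For $m \geq 2$ I would peel off one coordinate. Let $p : M^n \to M^{n-1}$ forget the last coordinate and $\pi' : M^{n-1} \to M^d$ be the projection onto the first $d$ coordinates, so $\pi = \pi' \circ p$. Then $p(Y)$ has discrete fibers over $U$ (projections of discrete sets are discrete by property (a)), $p(X) \subseteq p(Y)$, and $U \subseteq \pi(X) = \pi'(p(X))$; so the induction hypothesis applied to $p(X) \subseteq p(Y)$ gives a definable open $V_1 \subseteq U$, a definable open $W_1 \subseteq M^{n-1}$, and a definable continuous section $g : V_1 \to p(X)$ of $\pi'$ with $\pi'(W_1) = V_1$ and $p(Y) \cap W_1 = g(V_1)$. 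I then transport the remaining coordinate along $g$: put $Z = \{(x, t) \in V_1 \times M : (g(x), t) \in Y\}$ and $Z_X = \{(x, t) \in V_1 \times M : (g(x), t) \in X\}$, definable subsets of $M^{d+1}$ with $Z_X \subseteq Z$, whose fibers over each $x \in V_1$ are discrete (they embed into $Y_x$) and with $Z_X$ projecting onto $V_1$ in the first $d$ coordinates (since $g(V_1) \subseteq p(X)$). Applying the already-established $m = 1$ case to $Z_X \subseteq Z$ produces a definable open $V \subseteq V_1$, a definable open $W_2 \subseteq M^{d+1}$, and a continuous $\psi : V \to M$ with $(g(x), \psi(x)) \in X$ for all $x \in V$ and $\{t : (g(x), t) \in Y\} \cap (W_2)_x = \{\psi(x)\}$ on $V$. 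Setting $f(x) = (g(x), \psi(x))$ and $W = \{(w, t) \in M^{n-1} \times M : w \in W_1,\ \pi'(w) \in V,\ (\pi'(w), t) \in W_2\}$ (an open set), one checks $\pi \circ f = \mathrm{id}_V$, $f$ continuous with $f(V) \subseteq X$, and $\pi(W) = V$; and using $p(Y) \cap W_1 = g(V_1)$ to force the first $n-1$ coordinates of a point of $Y \cap W$ to equal $g(\pi(\cdot))$, then the defining relation of $Z$ and $W_2$ to pin down the last coordinate, one gets $Y \cap W = f(V)$.

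The main obstacle is the \emph{uniform} isolation of the chosen section inside $Y$: each fiber $Y_x$ is discrete, but we need the selected point to stay isolated by an open box that does not degenerate as $x$ varies. This is precisely what the definitions of $a(x), b(x)$ in the $m = 1$ case accomplish — definable completeness guarantees their existence, Lemma \ref{lem:key0} guarantees they are genuine neighbours of $h(x)$ within $Y_x$, and property (c) together with property (b) (the latter to handle the $\pm\infty$ loci via the conventions for functions into $M \cup \{\pm\infty\}$) makes them continuous on an open box, which is exactly what keeps $W$ open. The bookkeeping in the inductive step — that $p(Y) \cap W_1 = g(V_1)$ really controls the top $n - 1$ coordinates and that the fibers of $Z$ are discrete — is routine and I would not belabour it.
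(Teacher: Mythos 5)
Your proposal is correct and uses the same key ingredients as the paper's proof: a definable section obtained from property (d), made continuous by property (c), then uniformly isolated inside the closed discrete fibers by taking the infimum/supremum of the neighbouring fiber points (well-defined by definable completeness and genuine neighbours by Lemma \ref{lem:key0}), with those bounds made continuous on a smaller box via properties (b) and (c). The only difference is organizational: you peel off the $n-d$ fiber coordinates one at a time by induction on $n-d$, whereas the paper handles them simultaneously by defining, for each $d<j\leq n$, the bounds $\kappa_j^{\pm}$ from the coordinate projections $\rho_j(Y_x)$ of the whole fiber; both routes are fine.
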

\begin{proof}
Permuting the coordinates if necessary, we may assume that $\pi$ is the projection onto the first $d$ coordinates.
Let $\rho_j:M^n \rightarrow M$ be the coordinate projections onto the $j$-th coordinate for all $d<j \leq n$.
The fiber $Y_x$ is discrete for any $x \in U$ by the assumption.
Since $X_x$ is a definable subset of $Y_x$, $X_x$ is also a discrete set.
There exists a definable map $g:U \rightarrow X$ such that the composition $\pi \circ g$ is the identity map on $U$ by the property (d).
Note that $\rho_j(Y_x)$ is discrete and closed by the property (a) and Lemma \ref{lem:key0}.
Consider the definable functions $\kappa_j^+:U \rightarrow M \cup \{+\infty\}$ defined by 
\[
\kappa_j^+(x)=\left\{
\begin{array}{ll}
\inf \{t \in \rho_j(Y_x)\;|\; t>\rho_j(g(x))\} & \text{if } \{t \in \rho_j(Y_x)\;|\; t>\rho_j(g(x))\}\not=\emptyset \text{,}\\
+\infty & \text{otherwise}
\end{array}
\right.
\]
for all $d<j \leq n$.
We define $\kappa_j^-:U \rightarrow M \cup \{-\infty\}$ similarly.
Then, we have
\[
\pi^{-1}(x) \cap Y \cap (\{x\} \times (\kappa_{d+1}^-(x),\kappa_{d+1}^+(x)) \times \cdots \times (\kappa_{n}^-(x),\kappa_{n}^+(x))) = \{g(x)\}
\]
for all $x \in U$.
There exists a  definable open subset $V$ of $U$ such that the restriction $f$ of $g$ to $V$ and the restrictions of $\kappa_j^-$ and $\kappa_j^+$ to $V$ are all continuous by the properties (b) and (c).
Set $W=\{(x,y_{d+1},\ldots, y_n) \in V \times M^{n-d}\;|\; \kappa_j^-(x) < y_j < \kappa_j^+(x) \text{ for all } d<j \leq n\}$.
The definable sets $V$ and $W$ and a definable continuous map $f$ satisfy the requirements.
\end{proof}

Summarizing the above two lemmas, we get the following lemma.
\begin{lemma}\label{lem:pre}
Consider a definably complete locally o-minimal structure $\mathcal M=(M,<,\ldots)$ enjoying the property (a) in Definition \ref{def:tame_top}.
Let $X \subset Y$ be definable subsets of $M^n$ of dimension $d$.
There exist
\begin{itemize}
\item a coordinate projection $\pi:M^n \rightarrow M^d$,
\item a definable open subset $V$ of $\pi(X)$,
\item a definable open subset $W$ of $M^n$ and 
\item a definable continuous map $f:V \rightarrow X$
\end{itemize}
such that 
\begin{itemize}
\item $\pi(W)=V$, 
\item $Y \cap W = f(V)$ and 
\item the composition $\pi \circ f$ is the identity map on $V$.  
\end{itemize}
\end{lemma}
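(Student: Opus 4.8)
The plan is to simply chain Lemma \ref{lem:pre0} and Lemma \ref{lem:pre1} together, reducing this summary lemma to the two preceding results. Since $X \subset Y$ are definable subsets of $M^n$ of common dimension $d$, Definition \ref{def:dim} guarantees a coordinate projection $\pi : M^n \rightarrow M^d$ such that $\pi(X)$ has a nonempty interior. (A priori the dimension of $X$ is witnessed by some projection and that of $Y$ by possibly another; since $X \subset Y$ we have $\dim X \le \dim Y$, but we are told both equal $d$, so a projection witnessing $\dim X = d$ suffices, and that is the one we fix.)

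First I would apply Lemma \ref{lem:pre0} to $Y$ with this projection $\pi$: since $\dim Y = d$ and $\pi(Y) \supset \pi(X)$ has a nonempty interior, there is a definable open subset $U$ of $M^d$ contained in $\pi(Y)$ such that the fibers $Y \cap \pi^{-1}(x)$ are discrete for all $x \in U$. However, I need $U$ to be contained in $\pi(X)$, not merely in $\pi(Y)$, so that Lemma \ref{lem:pre1} applies. To arrange this, note that Lemma \ref{lem:pre0} actually produces $U$ as a definable open subset of $\pi(Y)$ avoiding the bad set $S = \{x \in \pi(Y) \mid Y \cap \pi^{-1}(x) \text{ not discrete}\}$, which has empty interior. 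Intersecting: $\myint(\pi(X)) \setminus S$ is a definable set with nonempty interior (using property (b), since $\myint(\pi(X))$ has nonempty interior and $S$ has empty interior), so it contains a definable open set $U' \subseteq \pi(X)$ on which all fibers $Y \cap \pi^{-1}(x)$ are discrete.

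Then I would apply Lemma \ref{lem:pre1} with this $U'$ in the role of its ``$U$'': the hypotheses are exactly that $\pi$ is a coordinate projection and $U'$ is a definable open subset of $M^d$ contained in $\pi(X)$ with $Y_x$ discrete for all $x \in U'$. The conclusion of Lemma \ref{lem:pre1} gives precisely a definable open $V \subseteq U' \subseteq \pi(X)$, a definable open $W \subseteq M^n$, and a definable continuous $f : V \rightarrow X$ with $\pi(W) = V$, $Y \cap W = f(V)$, and $\pi \circ f = \mathrm{id}_V$ — which is exactly what the lemma asserts.

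I do not expect a genuine obstacle here; the only point requiring a touch of care is the bookkeeping in the previous paragraph, namely shrinking the open set produced by Lemma \ref{lem:pre0} so that it lies inside $\pi(X)$ rather than only inside $\pi(Y)$, which is handled by property (b) together with the fact that the ``bad'' set of non-discrete fibers has empty interior. Everything else is a direct concatenation of the two lemmas.
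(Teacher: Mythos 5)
Your proposal is correct and follows essentially the same route as the paper, whose proof of this lemma is simply ``Immediate from the definition of dimension, Lemma \ref{lem:pre0} and Lemma \ref{lem:pre1}.'' The only substantive content you add is the bookkeeping step ensuring the open set lies in $\pi(X)$ rather than merely in $\pi(Y)$, which you handle correctly (and which could also be done by applying Lemma \ref{lem:pre0} to $Y\cap\pi^{-1}(\myint(\pi(X)))$); the paper leaves this implicit.
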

\begin{proof}
Immediate from the definition of dimension, Lemma \ref{lem:pre0} and Lemma \ref{lem:pre1}.
\end{proof}

We also need the following lemma and its corollary.
\begin{lemma}\label{lem:key4}
Let $\mathcal M=(M,<,\ldots)$ be as in Lemma \ref{lem:pre}.
Let $C \subset M^n$ be a definable open subset and $f:C \rightarrow M^n$ be a definable injective continuous map.
The image $f(C)$ has a nonempty interior.  
\end{lemma}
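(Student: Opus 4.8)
The plan is to prove, by induction on $n$, the statement $(\ast_n)$: for every nonempty definable open $C \subseteq M^n$ and every definable injective continuous map $f : C \to M^n$, the image $f(C)$ has a nonempty interior. For $n = 1$, local o-minimality gives an open interval $I \subseteq C$; choosing $p < q$ in $I$ we have $f(p) \ne f(q)$ by injectivity, say $f(p) < f(q)$, and for each $w$ with $f(p) < w < f(q)$ the definable set $\{t \in [p,q] \mid f(t) \le w\}$ has a supremum $t_w$ by definable completeness, which satisfies $f(t_w) = w$ by continuity; hence $(f(p), f(q)) \subseteq f(C)$ and $\myint(f(C)) \ne \emptyset$.

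For the inductive step, let $n \ge 2$, assume $(\ast_{n-1})$, and suppose for contradiction that $f(C)$ has empty interior, so that $d := \dim f(C) \le n-1$. Apply Lemma \ref{lem:pre} with $X = Y = f(C)$ to obtain a coordinate projection $\pi : M^n \to M^d$, a definable open $V \subseteq M^d$, a definable open $W \subseteq M^n$ and a definable continuous $s : V \to f(C)$ with $\pi(W) = V$, $f(C) \cap W = s(V)$ and $\pi \circ s = \operatorname{id}_V$. Set $E = f^{-1}(W)$. Since $f$ is continuous and $C, W$ are open, $E$ is open in $M^n$, and $E \ne \emptyset$ because every point of $s(V) \subseteq f(C) \cap W$ has a preimage in $E$. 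The key observation is that $\psi := \pi \circ f|_E : E \to M^d$ is injective: for $x \in E$ one has $f(x) \in f(C) \cap W = s(V)$, hence $f(x) = s(\pi(f(x))) = s(\psi(x))$, so $\psi(x) = \psi(x')$ forces $f(x) = f(x')$ and therefore $x = x'$. Pick an open box $B \subseteq E$. If $d = 0$ this is already absurd, since $\psi|_B$ would inject the infinite set $B$ into a one-element set. Hence $d \ge 1$, and composing $\psi|_B$ with the definable injective continuous map $M^d \hookrightarrow M^{n-1}$ that appends $n-1-d$ fixed constants (the identity when $d = n-1$), we obtain a definable injective continuous map $\varphi : B \to M^{n-1}$ with $B$ a nonempty open box in $M^n$.

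Write $B = B' \times J$ with $B' \subseteq M^{n-1}$ an open box and $J \subseteq M$ an open interval. For each $c \in J$ the map $\varphi_c := \varphi(\cdot, c) : B' \to M^{n-1}$ is definable, continuous and injective, so by $(\ast_{n-1})$ the image $A_c := \varphi_c(B')$ has a nonempty interior; moreover $A_c \cap A_{c'} = \emptyset$ whenever $c \ne c'$, since $\varphi$ is injective. Consider the definable set $\mathcal X = \{(c,y) \in M \times M^{n-1} \mid c \in J,\ y \in A_c\} \subseteq M^n$. Its fiber over $c$ equals $A_c$, so the set of $c$ for which $\mathcal X_c$ has a nonempty interior is exactly $J$, which has a nonempty interior; by Lemma \ref{lem:key3}, $\mathcal X$ has a nonempty interior and hence contains an open box $J'' \times D$ with $J'' \subseteq J$ an open interval and $D \subseteq M^{n-1}$ a nonempty open box. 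Then $D \subseteq A_c$ for all $c \in J''$, and choosing two distinct $c_1, c_2 \in J''$ yields $\emptyset \ne D \subseteq A_{c_1} \cap A_{c_2}$, contradicting disjointness. This establishes $(\ast_n)$ and completes the induction.

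The step I expect to be the main obstacle is the reduction inside the inductive step: extracting from the hypothetical low-dimensional image, via Lemma \ref{lem:pre}, a definable open set $W$ whose full preimage $E$ under $f$ is open and on which the coordinate projection $\pi$ restricts to an \emph{injection} of $f(E)$ onto an open subset of $M^d$. This is exactly where the injectivity of $f$ and the "graph over $M^d$" description supplied by Lemma \ref{lem:pre} must be combined, and it is the conceptual heart of the argument. Once a definable injective continuous map from an $n$-dimensional open box into $M^{n-1}$ has been produced, the slicing argument together with Lemma \ref{lem:key3} is routine.
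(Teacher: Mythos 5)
Your proof is correct, and its skeleton coincides with the paper's: both argue by induction on $n$, assume for contradiction that $f(C)$ has empty interior so that $d=\dim f(C)<n$, and use Lemma \ref{lem:pre} to convert $f$ into a definable injective continuous map $\pi\circ f$ from a nonempty definable open subset of $M^n$ into $M^d$ (your verification that $\psi=\pi\circ f|_{f^{-1}(W)}$ is injective, via $f(x)=s(\pi(f(x)))$, is exactly the "conceptual heart" you identified, and it is sound). The endgames, however, genuinely differ. The paper fixes a single slice: it writes an open box $B=B_1\times B_2$ with $B_1\subset M^d$, applies the induction hypothesis to $g=\pi\circ f(\cdot,c):B_1\to M^d$ to obtain an open box $D\subset g(B_1)$, and then perturbs $c$ to a nearby $c'\neq c$; continuity forces $\pi(f(x_0,c'))$ into $D\subset g(B_1)$, producing a collision with some $\pi(f(x_1,c))$ that contradicts injectivity. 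You instead slice along the last coordinate, apply $(\ast_{n-1})$ to \emph{every} slice $\varphi_c:B'\to M^{n-1}$ to get pairwise disjoint images $A_c$, each with nonempty interior, and invoke Lemma \ref{lem:key3} to force two of these disjoint sets to share a nonempty open box. So you trade the paper's pointwise continuity--perturbation argument for an extra appeal to Lemma \ref{lem:key3} (hence to properties (b) and (c), which is legitimate here since (a) implies them); a side benefit is that your $d=0$ case is dispatched by pure cardinality, whereas the paper needs the definable-connectedness fact \cite[Proposition 1.6]{M}. Your explicit intermediate-value base case at $n=1$ is also fine; the paper instead starts the induction at the trivial case $n=0$.
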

\begin{proof}
We may assume that $C$ is an open box without loss of generality.
The lemma is obvious when $n=0$.
We assume that $n>0$.
We lead to a contradiction assuming that $f(C)$ has an empty interior.
Set $d=\dim f(C)$.
We have $0 \leq d <n$.
When $d=0$, the set $f(C)$ is discrete by Proposition \ref{prop:zero}.
The image $f(C)$ is a singleton by \cite[Proposition 1.6]{M} because the open box $C$ is definably connected.
Contradiction to the assumption that $f$ is injective.

We next consider the case in which $d \not=0$.
Applying Lemma \ref{lem:pre}, we can take a coordinate projection $\pi:M^n \rightarrow M^d$ and a definable open set $W$ of $M^n$ such that the restriction of $\pi$ to $f(C) \cap W$ is injective and its image is a definable open set.
We may assume that the restriction of $\pi$ to $f(C)$ is injective by considering $f^{-1}(W)$ instead of $C$.
Since $f$ is injective and continuous by the assumption, the composition $\pi \circ f$ is also injective and continuous.

Take an open box $B$ contained in $C$.
Let $B_1$ and $B_2$ be the open boxes in $M^d$ and $M^{n-d}$ with $B=B_1 \times B_2$, respectively.
Take $c \in B_2$.
Consider the definable map $g:B_1 \rightarrow M^d$ given by $g(x)=\pi(f(x,c))$.
It is injective and continuous.
There exists an open box $D$ in $M^d$ with $D \subset g(B_1)$ by the induction hypothesis.
Take a point $x_0 \in B_1$ with $g(x_0) \in D$ and a point $c' \in B_2$ sufficiently close to $c$ with $c' \not=c$.
We have $\pi(f(x_0,c')) \in D$ because $\pi \circ f$ is continuous.
There exists a point $x_1 \in B_1$ with $\pi(f(x_0,c'))=g(x_1)=\pi(f(x_1,c))$ because $D \subset g(B_1)$.
It contradicts the fact that  $\pi \circ f$ is injective.
\end{proof}

\begin{corollary}\label{cor:key}
Let $\mathcal M=(M,<,\ldots)$ be as in Lemma \ref{lem:pre}.
Let $B$ and $C$ be open boxes in $M^m$ and $M^n$, respectively.
If there exists a definable continuous injective map from $B$ to $C$, we have $m \leq n$.
\end{corollary}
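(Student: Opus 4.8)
The plan is to argue by contradiction and reduce to Lemma~\ref{lem:key4}. Suppose $m>n$, so that $m-n\geq 1$. Since $M$ is nonempty, fix a point $p\in M^{m-n}$. Let $f:B\to C\subseteq M^n$ be the given definable continuous injective map, and define $F:B\to M^m$ by $F(x)=(f(x),p)$.

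Next I would verify that $F$ inherits the needed properties. It is definable because $f$ is definable and $p$ is a parameter; it is continuous because each of its coordinate functions is either a coordinate function of $f$ or a constant; and it is injective because $F(x)=F(x')$ forces $f(x)=f(x')$, hence $x=x'$ by injectivity of $f$. Thus $F$ is a definable continuous injective map from the open box $B\subseteq M^m$ into $M^m$, so Lemma~\ref{lem:key4} applies and shows that $F(B)$ has a nonempty interior in $M^m$.

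Finally I would derive the contradiction from the shape of the image. By construction $F(B)\subseteq M^n\times\{p\}$. Since the order on $M$ is dense and without endpoints, every open interval is infinite, so no open box in $M^m$ can be contained in a set of the form $M^n\times\{p\}$ when $m>n$; hence $M^n\times\{p\}$ has empty interior, contradicting the previous paragraph. Therefore $m\leq n$. I expect no real obstacle: the only content is the routine check that $F$ remains injective and continuous together with the elementary observation that a slice $M^n\times\{p\}$ has empty interior, and all the substance is already carried by Lemma~\ref{lem:key4}.
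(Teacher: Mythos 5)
Your proposal is correct and follows essentially the same route as the paper: assume $m>n$, pad $f$ with a constant point of $M^{m-n}$ to get a definable continuous injective self-dimensional map, apply Lemma~\ref{lem:key4} to conclude the image has nonempty interior, and contradict the fact that the image lies in a slice $M^n\times\{p\}$. The only difference is that you spell out the final contradiction (the slice has empty interior because the order is dense), which the paper leaves implicit.
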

\begin{proof}
We lead to a contradiction assuming that $m>n$.
Take a definable continuous injective map $f:B \rightarrow C$ and $c \in M^{m-n}$.
Consider the definable map $g:B \rightarrow C \times M^{m-n}$ given by $g(x)=(f(x),c)$.
It is obviously continuous and injective.
The image $g(B)$ has a nonempty interior by Lemma \ref{lem:key4}.
Contradiction.
\end{proof}

The following theorem is one of the main theorems of this paper.
\begin{theorem}\label{thm:dim}
Consider a definably complete locally o-minimal structure $\mathcal M=(M,<,\ldots)$ enjoying the property (a) in Definition \ref{def:tame_top}.
The following assertions hold true:
\begin{enumerate}
\item[(1)] Let $X \subset Y$ be definable sets.
Then, the inequality $\dim(X) \leq \dim(Y)$ holds true.
\item[(2)] Let $\sigma$ be a permutation of the set $\{1,\ldots,n\}$.
The definable map $\overline{\sigma}:M^n \rightarrow M^n$ is defined by $\overline{\sigma}(x_1, \ldots, x_n) = (x_{\sigma(1)},\ldots, x_{\sigma(n)})$.
Then, we have $\dim(X)=\dim(\overline{\sigma}(X))$ for any definable subset $X$ of $M^n$.
\item[(3)] Let $X$ and $Y$ be definable sets.
We have $\dim(X \times Y) = \dim(X)+\dim(Y)$.
\item[(4)] Let $X$ and $Y$ be definable subsets of $M^n$.
We have 
\begin{align*}
\dim(X \cup Y)=\max\{\dim(X),\dim(Y)\}\text{.}
\end{align*}
\item[(5)] Let $f:X \rightarrow M^n$ be a definable map. 
We have $\dim(f(X)) \leq \dim X$.
\item[(6)] Let $f:X \rightarrow M^n$ be a definable map. 
The notation $\mathcal D(f)$ denotes the set of points at which the map $f$ is discontinuous. 
The inequality $\dim(\mathcal D(f)) < \dim X$ holds true.
\item[(7)] Let $X$ be a definable set.
The notation $\partial X$ denotes the frontier of $X$ defined by $\partial X = \overline{X} \setminus X$.
We have $\dim (\partial X) < \dim X$.
\end{enumerate}
\end{theorem}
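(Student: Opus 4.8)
The plan is to establish the seven assertions in the stated order, bootstrapping from the definition of dimension and from Lemma~\ref{lem:pre}, Lemma~\ref{lem:key3} and the properties (a)--(d). Assertions (1)--(4) are soft. For (1), if a coordinate projection $\pi$ sends $X$ onto a set with nonempty interior, so does $\pi(Y)\supseteq\pi(X)$. For (2), pre-composing a coordinate projection with $\overline{\sigma}$ or $\overline{\sigma}^{-1}$ is again a coordinate projection. For (3), writing $X\subseteq M^m$ and $Y\subseteq M^{m'}$, every coordinate projection of $M^{m+m'}$ is a product of a coordinate projection of $M^m$ and one of $M^{m'}$, and since an open box in a product is a product of open boxes in the factors, $\pi(X\times Y)$ has nonempty interior iff the two factor projections of $X$ and $Y$ both do; this yields both inequalities. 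For (4), $\ge$ is (1), while for $\le$ a coordinate projection $\pi$ realising $\dim(X\cup Y)$ satisfies $\pi(X\cup Y)=\pi(X)\cup\pi(Y)$, so property~(b) forces $\pi(X)$ or $\pi(Y)$ to have nonempty interior.

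For (5) I first record that $\dim\pi(Z)\le\dim Z$ for every coordinate projection $\pi$, since composing $\pi$ with a coordinate projection witnessing $\dim\pi(Z)$ witnesses the same value for $Z$. As $f(X)$ is the image of the graph $\Gamma(f)\subseteq M^{m}\times M^{n}$ under the projection onto the last $n$ coordinates, it suffices to prove $\dim\Gamma(f)\le\dim X$, which I do by induction on $n$. For $n=1$, say $f\colon X\to M$, set $e=\dim\Gamma(f)$ and take a coordinate projection $\pi$ with $\pi(\Gamma(f))$ of nonempty interior. If $\pi$ omits the last coordinate it factors through the projection of $\Gamma(f)$ onto $X$, so $e\le\dim X$. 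If $\pi$ uses the last coordinate, then up to reordering $\pi(\Gamma(f))=\{(q(x),f(x))\mid x\in X\}$ for some coordinate projection $q\colon M^m\to M^{e-1}$; choosing a box $B'\times I$ inside this set, each fibre $f(X\cap q^{-1}(u))$ with $u\in B'$ contains $I$, hence is not discrete, hence $X\cap q^{-1}(u)$ is not discrete — the image of a discrete definable set under a definable map is discrete, by property~(a) applied to its graph — so $\dim(X\cap q^{-1}(u))\ge1$ for all $u\in B'$ by Proposition~\ref{prop:zero}. Using property~(b) I then fix one further coordinate and a subbox of $B'$ over which the corresponding projection of these fibres has nonempty interior, and Lemma~\ref{lem:key3} produces a coordinate projection of $X$ onto $M^{e}$ with image of nonempty interior, i.e.\ $\dim X\ge e$. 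The step $n\ge2$ is formal: if $f'\colon X\to M^{n-1}$ consists of the first $n-1$ components of $f$, then $\dim\Gamma(f)\le\dim\Gamma(f')$ by the case $n=1$ applied over $\Gamma(f')$, and $\dim\Gamma(f')\le\dim X$ by the induction hypothesis.

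For (6) I may assume $d=\dim X\ge1$, since when $\dim X=0$ the set $X$ is discrete (Proposition~\ref{prop:zero}) and $f$ is continuous everywhere, so $\mathcal D(f)=\emptyset$. Suppose, for contradiction, $\dim\mathcal D(f)=d$. Lemma~\ref{lem:pre} applied to $\mathcal D(f)\subseteq X$ yields a coordinate projection $\pi$, a nonempty open $V\subseteq M^{d}$, an open $W\subseteq M^{m}$ and a definable continuous injection $f_0\colon V\to\mathcal D(f)$ with $X\cap W=f_0(V)$ and $\pi\circ f_0=\mathrm{id}_V$; thus $\pi|_{X\cap W}$ is a homeomorphism onto $V$ with inverse $f_0$ and $f|_{X\cap W}=(f\circ f_0)\circ\pi|_{X\cap W}$. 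The composite $f\circ f_0\colon V\to M^{n}$ is definable on the open set $V$, so by property~(c) it is continuous on some nonempty open $V'\subseteq V$; since $X\cap W$ is open in $X$, $f|_X$ is then continuous at every point of the nonempty set $f_0(V')\subseteq\mathcal D(f)$, a contradiction.

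Assertion (7) is the most delicate. Applying Lemma~\ref{lem:pre} to $X\subseteq X$ gives a coordinate projection $\pi$, a nonempty open $V\subseteq\pi(X)$, an open $W\subseteq M^{n}$ and a definable continuous injection $f\colon V\to X$ with $X\cap W=f(V)$, $\pi(W)=V$ and $\pi\circ f=\mathrm{id}_V$. Since $f(V)$ is the graph of a continuous function over the open set $V$, it is closed in $\pi^{-1}(V)\supseteq W$, so $\overline{f(V)}\setminus f(V)\subseteq M^{n}\setminus W$; together with $\overline{X}\cap W\subseteq\overline{X\cap W}$ (valid because $W$ is open) this forces $\partial X\cap W=\emptyset$, so the frontier of $X$ avoids a nonempty open set. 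I then finish by induction on $\dim X$ — trivial for $\dim X=0$, where $X$ is closed — by repeatedly peeling off such regular open pieces and using parts~(1)--(4) together with the equality $\dim\overline{A}=\dim A$ for definable $A$ (which emerges from this same argument and (4)) to bound the remaining frontier by the dimension of a proper lower-dimensional part, obtaining $\dim\partial X<\dim X$. The two steps I expect to demand the most care are the fibre-dimension promotion in the $n=1$ case of (5), which is the only place property~(b) and Lemma~\ref{lem:key3} are genuinely essential, and the bookkeeping in (7) that upgrades ``the frontier misses one nonempty open set'' to the global dimension drop.
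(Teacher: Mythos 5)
Your treatment of (1)--(6) is sound. In particular, your proof of (5) takes a genuinely different route from the paper: instead of producing a definable continuous injection from an open box of $M^{\dim\Gamma(f)}$ into $M^{\dim X}$ and invoking Corollary \ref{cor:key} (which rests on the invariance-of-dimension Lemma \ref{lem:key4}), you induct on the number of target coordinates and, in the case $n=1$, promote the non-discreteness of the fibres $X\cap q^{-1}(u)$ for $u$ in a box to a nonempty interior of a coordinate projection of $X$ via property (b) and Lemma \ref{lem:key3}. That argument is correct and bypasses Lemma \ref{lem:key4} entirely; the paper instead proves $\dim\Gamma(f)=\dim X$ by two applications of Lemma \ref{lem:pre} followed by Corollary \ref{cor:key}. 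Your (6) is essentially the paper's proof.

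The gap is in (7). What you actually establish is that $\partial X\cap W=\emptyset$ for the one open set $W$ produced by Lemma \ref{lem:pre} applied to $X\subseteq X$; the frontier missing a single nonempty open set says nothing about $\dim\partial X$. Your proposed completion --- induction on $\dim X$ by ``repeatedly peeling off such regular open pieces,'' using $\dim\overline{A}=\dim A$ --- does not close: deleting $W$ from $X$ does not lower $\dim X$, so the induction has no decreasing parameter (there may be no finite exhaustion by such pieces without already having the decomposition theorem, which in the paper depends on this very result), and by your own part (4) the identity $\dim\overline{A}=\dim A$ is equivalent to $\dim\partial A\le\dim A$, i.e.\ essentially the claim being proved. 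Two repairs are available. The paper's: define $f:\overline{X}\to M$ taking one value on $X$ and another on $\overline{X}\setminus X$; then $\partial X\subseteq\mathcal D(f)$, and (6) applied on $\overline{X}$ gives $\dim\partial X<\dim\overline{X}=\max\{\dim X,\dim\partial X\}$, which forces $\dim\partial X<\dim X$. Alternatively, assume $\dim\partial X\ge\dim X$, so that $\dim\partial X=\dim\overline{X}$ by (4), and apply Lemma \ref{lem:pre} to the pair $\partial X\subseteq\overline{X}$: the resulting open set $W$ satisfies $\overline{X}\cap W=h(V)\subseteq\partial X$, hence $X\cap W=\emptyset$ even though $W$ is open and meets $\overline{X}$ --- a contradiction.
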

\begin{proof}
The assertions (1) and (2) are obvious.
We omit the proofs.
\medskip

We demonstrate the assertion (3).
Assume that $X$ and $Y$ are definable subsets of $M^m$ and $M^n$, respectively.
Set $d=\dim(X)$, $e=\dim(Y)$ and $f=\dim(X \times Y)$.
We first show that $d+e \leq f$.
In fact, let $\pi:M^m \rightarrow M^d$ and $\rho:M^n \rightarrow M^e$ be coordinate projections such that both $\pi(X)$ and $\rho(Y)$ have nonempty interiors.
The definable set $(\pi \times \rho)(X \times Y)$ has a nonempty interior.
Therefore, we have $d+e \leq f$.
We show the opposite inequality.
Let $\Pi:M^{m+n} \rightarrow M^f$ be a coordinate projection with $\myint(\Pi(X \times Y)) \not=\emptyset$.
There exist coordinate projections $\pi_1:M^m \rightarrow M^{f_1}$ and $\pi_2:M^n \rightarrow M^{f_2}$ with $\Pi=\pi_1 \times \pi_2$.
In particular, we get $f=f_1+f_2$.
Since $\Pi(X \times Y)$ has a nonempty interior, there exist open boxes $C \subset M^{f_1}$ and $D \subset M^{f_2}$ with $C \times D \subset \Pi(X \times Y)$.
We get $C \subset \pi_1(X)$ and $D \subset \pi_2(Y)$.
Hence, we have $d \geq f_1$ and $e \geq f_2$.
We finally obtain $d+e \geq f_1+f_2 =f$.
\medskip

We next show the assertion (4).
The inequality $\dim(X \cup Y) \geq \max\{\dim(X),\dim(Y)\}$ is obvious by the assertion (1).
We show the opposite inequality.
Set $d=\dim(X \cup Y)$.
There exists a coordinate projection $\pi:M^n \rightarrow M^d$ such that $\pi(X \cup Y)$ has a nonempty interior by the definition of dimension.
At least one of $\pi(X)$ and $\pi(Y)$ has a nonempty interior by the property (b) because $\pi(X \cup Y)=\pi(X) \cup \pi(Y)$.
We may assume that $\pi(X)$ has a nonempty interior without loss of generality.
We have $d \leq \dim(X)$ by the definition of dimension.
We have demonstrated that $\dim(X \cup Y) \leq \max\{\dim(X),\dim(Y)\}$.
\medskip

The next target is the assertion (5).
Let $X$ be a definable subset of $M^m$.
The notation $\Gamma(f)$ denotes the graph of the map $f$.
We first demonstrate that $\dim(\Gamma(f))=\dim(X)$.
In fact, the inequality $\dim(X) \leq \dim(\Gamma(f))$ is obvious because $X$ is the projection image of $\Gamma(f)$.
Set $d=\dim(\Gamma(f))$ and $e=\dim(X)$.

Applying Lemma \ref{lem:pre} to the graph $\Gamma(f)$, we can take a coordinate projection $\pi:M^{m+n} \rightarrow M^d$, an open box $V$ contained in $\pi(\Gamma(f))$ and a definable continuous map $\tau:V \rightarrow \Gamma(f)$ such that the composition $\pi \circ \tau$ is the identity map on $V$.
In particular, the map $\tau$ is injective.

Let $\Pi:M^{m+n} \rightarrow M^m$ be the projection onto the first $m$-coordinate.
The restriction of $\Pi$ to the graph $\Gamma(f)$ is obviously injective.
Applying Lemma \ref{lem:pre} to the set $X$, we can take a coordinate projection $\rho:M^m \rightarrow M^e$ and a definable open subset $W$ of $M^m$ such that the restriction of $\rho$ to $W \cap X$ is injective.
The inverse image $(\Pi \circ \tau)^{-1}(W)$ contains an open box because $\Pi \circ \tau$ is continuous.
Replacing $V$ with the open box, we may assume that the restriction of $\rho$ to $\Pi(\tau(V))$ is injective.
We finally get the definable continuous injective map $\rho \circ \Pi \circ \tau: V \rightarrow M^e$.
We have $d \leq e$ by Corollary \ref{cor:key}.
We have shown that $\dim X = \dim \Gamma(f)$.

It is now obvious that $\dim f(X) \leq \dim \Gamma(f) = \dim X$ because $f(X)$ is the projection image of $\Gamma(f)$.
\medskip

We demonstrate the assertion (6).
Let $X$ be a definable subset of $M^m$.
We lead to a contradiction assuming that $d=\dim X=\dim \mathcal D(f)$.
By Lemma \ref{lem:pre}, there exist a coordinate projection $\pi:M^m \rightarrow M^d$, definable open subsets $V \subset \pi(\mathcal D(f))$ and $W \subset M^m$ and a definable continuous function $g:V \rightarrow \mathcal D(f)$ such that $\pi(W)=V$, $X \cap W=g(V)$ and $\pi \circ g$ is the identity map on $V$.
Shrinking $V$ and replacing $W$ with $W \cap \pi^{-1}(V)$ if necessary, we may assume that $f \circ g$ is continuous by the property (c).
Since $g$ is a definable homeomorphism onto its image, the function $f$ is continuous on $g(V)= X \cap W$.
On the other hand, $f$ is discontinuous everywhere on $X \cap W$ because $X \cap W$ is open in $X$ and $X \cap W =g(V)$ is contained in $\mathcal D(f)$.
Contradiction.
We have demonstrated the assertion (6).
\medskip

The remaining task is to demonstrate the assertion (7).
Take distinct elements $c,d \in M$.
Consider the definable function $f:\overline{X} \rightarrow M$ given by
\[
f(x)= \left\{
\begin{array}{ll}
c & \text{if } x \in X \text{ and }\\
d & \text{otherwise.} 
\end{array}
\right.
\]
It is obvious that $\mathcal D(f)$ contains $\partial X$.
The assertion (7) follows from the assertions (1) and (6).
\end{proof}

\begin{remark}
Theorem \ref{thm:dim} (1) through (3) hold true for any expansion of a densely linearly order without endpoints.
Theorem \ref{thm:dim} (4) is valid for any locally o-minimal structure with the property (b).
\end{remark}

A \textit{constructible} set is a finite boolean combination of open sets.
We get the following corollary:
\begin{corollary}\label{cor:constructible}
Consider a definably complete locally o-minimal structure enjoying the property (a) in Definition \ref{def:tame_top}.
Any definable set is constructible.
\end{corollary}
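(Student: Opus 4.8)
The plan is to argue by induction on $\dim X$, the crucial input being the frontier inequality Theorem \ref{thm:dim}(7); everything else is elementary Boolean bookkeeping.

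First I would record two routine facts. The family of constructible subsets of $M^n$ is a Boolean algebra: by definition it is closed under finite unions, and since the complement of an open set is itself a finite Boolean combination of open sets (a closed set $C$ is just $M^n \setminus (M^n \setminus C)$ with $M^n \setminus C$ open), it is closed under complementation, hence under finite intersections and set differences as well. In particular every closed definable set is constructible, being the complement of its open complement.

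Now let $X \subseteq M^n$ be definable, and induct on $\dim X$. If $X = \emptyset$, then $X$ is open, hence constructible. Otherwise set $d := \dim X \geq 0$ and assume inductively that every definable set of dimension $< d$ is constructible. The closure $\overline{X}$ is closed, hence constructible by the previous paragraph. The frontier $\partial X = \overline{X} \setminus X$ satisfies $\dim(\partial X) < d$ by Theorem \ref{thm:dim}(7), so it is constructible by the induction hypothesis. Therefore $X = \overline{X} \setminus \partial X = \overline{X} \cap (M^n \setminus \partial X)$ is a Boolean combination of constructible sets, hence constructible, completing the induction.

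I do not expect a genuine obstacle here: the only substantive ingredient is Theorem \ref{thm:dim}(7) (which is where definable completeness, local o-minimality and property (a) enter), and the rest is the verification that constructible sets form a Boolean algebra. The one point to keep in mind is that the induction is well-founded because $\dim$ takes values in the finite set $\{-\infty, 0, 1, \dots, n\}$ on subsets of $M^n$.
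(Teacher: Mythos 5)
Your proof is correct and follows essentially the same route as the paper: induction on dimension, using Theorem \ref{thm:dim}(7) to see that $\partial X$ has strictly smaller dimension and writing $X=\overline{X}\setminus\partial X$. The only cosmetic difference is the base case — the paper starts at $d=0$ via Proposition \ref{prop:zero}, while you start at $X=\emptyset$ and let the frontier inequality force $\partial X=\emptyset$ when $d=0$; both are fine.
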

\begin{proof}
Let $X$ be a definable set of dimension $d$.
We prove that $X$ is constructible by induction on $d$.
When $d=0$, the definable set $X$ is discrete and closed by Proposition \ref{prop:zero}.
In particular, it is constructible.
When $d>0$, the frontier $\partial X$ is of dimension smaller than $d$ by Theorem \ref{thm:dim}(7).
It is constructible by the induction hypothesis.
Therefore, $X= \overline{X} \setminus \partial X$ is also constructible. 
\end{proof}

The following theorem gives an alternative definition of dimension.
The alternative definition is the same as the definition of dimension given in \cite[Definition 5.1]{Fuji}.
\begin{theorem}\label{cor:alternative}
Consider a definably complete locally o-minimal structure $\mathcal M=(M,<,\ldots)$ enjoying the property (a) in Definition \ref{def:tame_top}.
A definable set $X$ is of dimension $d$ if and only if the nonnegative integer $d$ is the maximum of nonnegative integers $e$ such that there exist an open box $B$ in $M^e$ and 
a definable injective continuous map $\varphi:B \rightarrow X$ homeomorphic onto its image. 
\end{theorem}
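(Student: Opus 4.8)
Write $d_0 := \dim(X)$ in the sense of Definition \ref{def:dim}, and let $d_1$ denote the quantity described in the statement --- the largest nonnegative integer $e$ for which there exist an open box $B \subseteq M^e$ and a definable injective continuous map $\varphi \colon B \to X$ that is a homeomorphism onto its image. The plan is to prove the two inequalities $d_1 \le d_0$ and $d_0 \le d_1$ separately; the first inequality will also show that the set of admissible $e$ is bounded above, so that the maximum defining $d_1$ makes sense.

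For $d_1 \le d_0$, I would take any admissible $e$ with witnessing box $B \subseteq M^e$ and map $\varphi \colon B \to X$, and argue as follows. Since $\varphi$ is definable and injective, the inverse $\varphi^{-1} \colon \varphi(B) \to B$ is again a definable map (its graph is obtained from $\Gamma(\varphi)$ by flipping the two blocks of coordinates), so Theorem \ref{thm:dim}(5) applied to $\varphi^{-1}$ gives $\dim(B) \le \dim(\varphi(B))$. Because $\varphi(B) \subseteq X$, Theorem \ref{thm:dim}(1) gives $\dim(\varphi(B)) \le d_0$. Finally $\dim(B) = e$, since an open interval has dimension $1$ and $\dim$ is additive on products by Theorem \ref{thm:dim}(3). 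Hence $e \le d_0$ for every admissible $e$, so $d_1 \le d_0$; the set of admissible $e$ is nonempty (it contains $e = 0$ whenever $X \neq \emptyset$) and bounded above by $d_0$, so $d_1$ is well defined.

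For $d_0 \le d_1$, the key input is Lemma \ref{lem:pre} applied with $Y = X$: it yields a coordinate projection $\pi \colon M^n \to M^{d_0}$, a definable open $V \subseteq \pi(X)$, a definable open $W \subseteq M^n$, and a definable continuous map $f \colon V \to X$ with $\pi(W) = V$, $X \cap W = f(V)$, and $\pi \circ f = \mathrm{id}_V$. From $\pi \circ f = \mathrm{id}_V$ the map $f$ is injective, and it is a homeomorphism onto $f(V) = X \cap W$ because its inverse is the restriction of the coordinate projection $\pi$ to $f(V)$, which is continuous. As $V$ is a nonempty definable open subset of $M^{d_0}$, it contains an open box $B$, and restricting $f$ to $B$ produces a definable injective continuous map $B \to X$ that is a homeomorphism onto its image. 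Therefore $d_0$ is admissible, giving $d_0 \le d_1$, and combining the two inequalities we conclude $d_0 = d_1$, with the maximum defining $d_1$ attained by $d_0$ itself.

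I do not expect a genuine obstacle here: essentially all the substantive work is already packaged into Lemma \ref{lem:pre} and Theorem \ref{thm:dim}. The only small points needing a line of justification are that the inverse of a definable injection is definable, that a nonempty definable open subset of $M^{d_0}$ contains an open box, and that the homeomorphism-onto-image condition is automatic once $\pi \circ f = \mathrm{id}_V$.
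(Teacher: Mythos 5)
Your proposal is correct and follows essentially the same route as the paper: the upper bound $d_1\le d_0$ via Theorem \ref{thm:dim}(5) applied to $\varphi$ and its (definable) inverse together with Theorem \ref{thm:dim}(1), and the lower bound $d_0\le d_1$ by applying Lemma \ref{lem:pre} with $Y=X$ and observing that the resulting section $f$ is a homeomorphism onto its image since its inverse is the restriction of the coordinate projection. The extra details you flag (definability of $\varphi^{-1}$, $\dim B=e$, restriction to an open box) are exactly the points the paper leaves implicit.
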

\begin{proof}
Let $d'$ be the maximum of nonnegative integers $e$ satisfying the condition given in the theorem.
We first demonstrate $d' \leq d$.
In fact, let $B$ be an open box contained in $M^{d'}$ and $\varphi:B \rightarrow X$ be a definable injective continuous map homeomorphic onto its image.
We have $\dim \varphi(B)=\dim B=d'$ by Theorem \ref{thm:dim}(5).
We get $d = \dim X \geq \dim(\varphi(B)) = d'$ by Theorem \ref{thm:dim}(1).

We next demonstrate $d \leq d'$.
Applying Lemma \ref{lem:pre} to the definable set $X$, we can get a coordinate projection $\pi:M^n \rightarrow M^d$, a definable open box $U$ in $\pi(X)$ and a definable continuous map $\tau:U \rightarrow X$ such that $\pi \circ \tau$ is the identity map on $U$.
In particular, $\tau$ is a definable continuous injective map homeomorphic onto its image.
Therefore, we have $d \leq d'$ by the definition of $d'$.  
\end{proof}

We get the following corollary:
\begin{corollary}\label{cor:local_dim}
Let $\mathcal M=(M,<,\ldots)$ be as in Theorem \ref{cor:alternative}.
Let $X$ be a definable subset of $R^n$.
There exists a point $x \in M^n$ such that we have $\dim(X \cap B)=\dim(X)$ for any open box $B$ containing the point $x$.
\end{corollary}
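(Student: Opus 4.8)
The plan is to obtain the desired point directly from the local chart supplied by Lemma~\ref{lem:pre}. Write $d=\dim X$; we may assume $X\neq\emptyset$, the empty case being trivial. Applying Lemma~\ref{lem:pre} with $Y=X$, I get a coordinate projection $\pi\colon M^n\to M^d$, a definable open subset $V$ of $M^d$ contained in $\pi(X)$, a definable open subset $W$ of $M^n$, and a definable continuous map $f\colon V\to X$ such that $\pi(W)=V$, $X\cap W=f(V)$ and $\pi\circ f=\mathrm{id}_V$.

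Next I fix any $v_0\in V$ and put $x=f(v_0)$; the claim is that this $x$ works. Note first that $x\in f(V)=X\cap W$. Let $B$ be an arbitrary open box containing $x$. Since $f$ is continuous on $V$, $V$ is open in $M^d$, and $B$ is open in $M^n$, the set $f^{-1}(B)$ is a definable subset of $M^d$ that is open in $M^d$ and contains $v_0$; hence it contains an open box $V'$ with $v_0\in V'$. Then
\[
f(V')\subseteq f(V)\cap B=(X\cap W)\cap B\subseteq X\cap B.
\]

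It remains to estimate $\dim f(V')$. Restricting $f$ to $V'$ gives a definable map, so $\dim f(V')\le\dim V'=d$ by Theorem~\ref{thm:dim}(5). On the other hand $\pi\bigl(f(V')\bigr)=V'$ has nonempty interior in $M^d$, so $\dim f(V')\ge d$ by the definition of dimension; hence $\dim f(V')=d$. Combining this with the displayed inclusion and Theorem~\ref{thm:dim}(1) yields $d=\dim f(V')\le\dim(X\cap B)\le\dim X=d$, so $\dim(X\cap B)=\dim X$ for every open box $B$ containing $x$.

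I do not expect a real obstacle here: the substantive content is entirely in Lemma~\ref{lem:pre}, which already presents a definable open subset $W$ of $M^n$ on which $X$ restricts to a graph over an open subset of $M^d$, and every point of that graph visibly has full local dimension. The only mild points needing care are that $f^{-1}(B)$ is open in $M^d$ (so that it contains an open box around $v_0$) and that $\dim f(V')=d$, both of which are routine given Theorem~\ref{thm:dim}.
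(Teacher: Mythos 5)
Your proof is correct and follows essentially the same route as the paper: the paper invokes Theorem \ref{cor:alternative} to get a continuous injective chart $\varphi:U\to X$ and restricts it to $\varphi^{-1}(B)$, while you unwind that theorem and work directly with the section $f$ from Lemma \ref{lem:pre}, which is the same underlying object. The only cosmetic difference is that you conclude $\dim f(V')\ge d$ from the definition of dimension via $\pi(f(V'))=V'$ rather than by citing Theorem \ref{cor:alternative} again; both steps are valid.
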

\begin{proof}
Set $d=\dim(X)$.
There exists an open box $U$ in $M^d$ and a definable continuous injective map $\varphi:U \rightarrow X$ homeomorphic onto its image by Theorem \ref{cor:alternative}.
Take an arbitrary point $t \in U$ and set $x=\varphi(t)$.
For any open box $B$ containing the point $x$, the inverse image $\varphi^{-1}(B)$ is a definable open set.
Take a open box $V$ with $t \in V \subset \varphi^{-1}(B)$.
The restriction $\varphi|_{V} V \rightarrow X \cap B$ is a definable continuous injective map homeomorphic onto its image.
Hence, we have $\dim(X \cap B) \geq d$ by Theorem \ref{cor:alternative}.
The opposite inequality follows from Theorem \ref{thm:dim}(1).
\end{proof}

We begin to demonstrate the addition property of dimension for definably complete locally o-minimal structures enjoying the property (a) in Definition \ref{def:tame_top}.
It is a counterpart of \cite[Chapter 4, Proposition 1.5]{vdD} in the o-minimal case, that of \cite[Theorem 4.2]{W} in the weakly o-minimal case and that of \cite[Lemma 5.4]{Fuji} in the case of local o-minimal structure admitting local definable cell decomposition.
We first treat a special case.
\begin{lemma}\label{lem:addition0}
Consider a definably complete locally o-minimal structure $\mathcal M=(M,<,\ldots)$ enjoying the property (a) in Definition \ref{def:tame_top}.
Let $\varphi:X \rightarrow Y$ be a definable surjective map whose fibers $\varphi^{-1}(y)$ are discrete for all $y \in Y$.
We have $\dim X = \dim Y$.  
\end{lemma}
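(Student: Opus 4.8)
The plan is to prove the two inequalities $\dim Y\le\dim X$ and $\dim X\le\dim Y$ separately. The first is immediate: $Y=\varphi(X)$, so $\dim Y\le\dim X$ by Theorem \ref{thm:dim}(5). For the reverse inequality I would induct on $d:=\dim X$. The case $d=0$ is trivial (then $\dim Y\le 0$ and $Y\ne\emptyset$), so assume $d\ge 1$ and that the statement holds for all definable surjections with discrete fibers whose domain has dimension $<d$. By Theorem \ref{cor:alternative} choose an open box $B\subseteq M^d$ and a definable injective continuous $\psi:B\to X$ which is a homeomorphism onto its image, and put $g:=\varphi\circ\psi:B\to M^k$ (where $Y\subseteq M^k$); since $\psi$ is a homeomorphism onto its image and the fibers of $\varphi$ are discrete, every fiber of $g$ is discrete. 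Because $g(B)\subseteq Y$ it suffices to prove $\dim g(B)\ge d$. Using property (c) and passing to a sub-box I may assume $g$ is continuous; if $g(B)$ were discrete it would be a singleton, since $B$ is definably connected (\cite[Proposition 1.6]{M}), forcing a fiber of $g$ to equal $B$, which is impossible as $d\ge1$. Hence $a:=\dim g(B)\in\{1,\dots,d\}$, and the whole task reduces to showing $a\ge d$.

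Next I would use Lemma \ref{lem:pre}, applied to $g(B)$ with itself, to replace $g$ by a map whose target dimension is exactly $a$. This yields a coordinate projection $\rho:M^k\to M^a$, an open box $V\subseteq\rho(g(B))$, an open $W\subseteq M^k$ and a continuous $\sigma:V\to g(B)$ with $\rho\sigma=\mathrm{id}_V$ and $g(B)\cap W=\sigma(V)$. On the open nonempty set $g^{-1}(W)$ the composite $g':=\rho\circ g|_{g^{-1}(W)}$ is continuous, maps into $V\subseteq M^a$, and has discrete fibers: since $\rho$ is injective on $\sigma(V)=g(g^{-1}(W))$, the fiber of $g'$ over $v$ equals the fiber of $g$ over $\sigma(v)$. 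Passing to an open sub-box, I may take the domain of $g'$ to be a box $B_1\times I\subseteq M^{d-1}\times M$.

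Now comes the key step, the reparametrization in the $I$-direction. For each $x'\in B_1$ not all of the coordinate functions $g'_j(x',\cdot):I\to M$ can be locally constant (otherwise a fiber of $g'$ would contain an interval), so by the strong local monotonicity property — available from property (a) via Theorem \ref{thm:dependence}(ii) — some $g'_j(x',\cdot)$ is locally strictly monotone on a subinterval; invoking property (b) I may shrink $B_1\times I$ and relabel coordinates so that $g'_1(x',\cdot):I\to M$ is strictly monotone and continuous for every $x'\in B_1$. Let $\theta(x',\cdot)$ be the inverse of the homeomorphism $g'_1(x',\cdot)$ onto its open image interval, and set $\tilde g(x',s):=g'(x',\theta(x',s))$. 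Its domain has nonempty interior by Lemma \ref{lem:key3}, and after deleting the set of discontinuities of $\theta$, which has dimension $<d$ by Theorem \ref{thm:dim}(6), it contains an open box $B''\times J$ with $B''\subseteq M^{d-1}$; on $B''\times J$ the map $\tilde g$ is continuous, has discrete fibers, and satisfies $\tilde g_1(x',s)=s$. Fixing $s_0\in J$ and setting $g^{(s_0)}:=(\tilde g_2(\cdot,s_0),\dots,\tilde g_a(\cdot,s_0)):B''\to M^{a-1}$, a fiber of $g^{(s_0)}$ is the $s_0$-slice of the corresponding discrete fiber of $\tilde g$ (using $\tilde g_1\equiv s_0$ there), hence discrete, and $g^{(s_0)}$ is continuous. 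The induction hypothesis applied to $g^{(s_0)}:B''\to g^{(s_0)}(B'')$ gives $\dim g^{(s_0)}(B'')=\dim B''=d-1$, and since $g^{(s_0)}(B'')\subseteq M^{a-1}$ this forces $d-1\le a-1$, i.e.\ $a\ge d$. Therefore $\dim Y\ge\dim g(B)=a=d$, which closes the induction.

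The main obstacle, and the reason for the reparametrization, is that slicing the domain alone drops its dimension by one while leaving the target dimension untouched, so the induction hypothesis cannot be applied directly to control $a=\dim g(B)$; one must simultaneously "spend" one target coordinate, which is precisely what trading the $I$-direction of the domain for the $g'_1$-direction of the target achieves. The delicate points to verify carefully are that discreteness of the fibers really does force $g'_1$ to be, after shrinking, a fibrewise homeomorphism (this is where strong local monotonicity and property (b) enter), and that the reparametrized map $\tilde g$ retains continuity and discrete fibers on an honest open box.
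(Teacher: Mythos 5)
Your argument is correct, but it takes a genuinely different route from the paper's. The paper reduces to the case of a continuous $\varphi$ with open domain in $M^d$, then uses Lemma \ref{lem:aaa}, Lemma \ref{lem:pre0} and Lemma \ref{lem:pre1} to cut the graph $\Gamma(\varphi)$ down to a piece on which the projection to $M^e$ is \emph{injective}; composing with $x\mapsto(x,\varphi(x))$ yields a definable continuous injective map from a $d$-dimensional box into $M^e$, and $d\le e$ then falls out of Corollary \ref{cor:key}, i.e.\ of the invariance-of-domain-type Lemma \ref{lem:key4}. You never pass to an injective map: you keep the discrete-fibered map $g=\varphi\circ\psi$ itself and run an induction on $\dim X$, using strong local monotonicity to make one target coordinate fibrewise strictly monotone in the last domain variable, reparametrizing so that coordinate becomes the identity, and then slicing to get a discrete-fibered map $B''\to M^{a-1}$ to which the induction hypothesis applies. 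In effect you reprove a discrete-fibers analogue of Lemma \ref{lem:key4} from scratch, whereas the paper reduces the discrete-fiber situation to the injective one via property (d) and quotes the already-proven Corollary \ref{cor:key}; the paper's route is shorter given that corollary, while yours is self-contained at the cost of the reparametrization step, which is indeed where the care is needed. The two delicate points you flag do check out with the paper's toolkit: the uniform choice of a monotone coordinate over $x'$ is exactly the $X_+,X_-,X_c,X_p$ argument from the proof of Theorem \ref{thm:dependence}(iii) combined with the observation that $\bigcap_j P_j=\emptyset$ (else a fiber of $g'$ contains an interval), and the discreteness of the fibers of $g^{(s_0)}$ follows because each such fiber is the projection, under a coordinate projection, of the intersection of a discrete fiber of $g'$ with the graph of $\theta(\cdot,s_0)$, hence discrete by property (a). One cosmetic difference: you dispose of possible discontinuity of $\varphi$ at the outset by shrinking to a sub-box via property (c), which suffices since only a lower bound for $\dim Y$ is at stake, while the paper treats the discontinuous case by a separate induction using $\mathcal D(\varphi)$; both are fine.
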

\begin{proof}
Let $X$ and $Y$ be definable subsets of $M^m$ and $M^n$, respectively.
Set $d=\dim(X)$ and $e=\dim(Y)$.

We first assume that $\varphi$ is continuous.
We have $d \geq e$ by Theorem \ref{thm:dim}(5).
We demonstrate the opposite inequality.
We first reduce to the case in which $X$ is a definable open subset of $M^m$.
There exist a definable open subset $U$ of $R^d$ and a definable continuous injective map $\sigma:U \rightarrow X$ homeomorphic onto its image by Theorem \ref{cor:alternative}.
If the lemma holds true for the composition $\varphi \circ \sigma$, we have $\dim X = d = \dim U = \dim \varphi \circ \sigma(U) \leq \dim Y=e$ by Theorem \ref{thm:dim}(1).
The lemma is also true for the original $\varphi$.
Hence, we may assume that $X$ is open in $M^m$.
In particular, we have $m=d$.

Let $\Pi:M^{m+n} \rightarrow M^n$ be the projection onto the last $n$ coordinates.
Consider the graph $\Gamma(\varphi)$ of $\varphi$.
Note that $\Pi^{-1}(y) \cap \Gamma(\varphi)$ are discrete for all $y \in Y$.
Take a coordinate projection $\pi:M^n \rightarrow M^e$ such that $\pi(Y)$ has a nonempty interior.
The definable set $(\pi \circ \Pi)^{-1}(z) \cap \Gamma(\varphi)$ is discrete and closed if $\pi^{-1}(z) \cap Y$ is discrete for $z \in M^e$ by Lemma \ref{lem:aaa}. 
By Lemma \ref{lem:pre0} and Lemma \ref{lem:pre1}, there exist definable open subsets $V \subset \pi(Y)$ and $W \subset M^{m+n}$ and a definable continuous map $\tau:V \rightarrow \Gamma(\varphi)$ such that $\pi \circ \Pi(W)=V$, $W \cap \Gamma(\varphi)=\tau(V)$ and $\pi \circ \Pi \circ \tau$ is the identity map on $V$.
In particular, the restriction of $\pi \circ \Pi$ to $W \cap \Gamma(\varphi)$ is injective.

Let $\iota:X \rightarrow \Gamma(\varphi)$ be the natural injection.
The map $\iota$ is continuous because $\varphi$ is continuous.
We may assume that $\pi \circ \Pi \circ \iota$ is injective replacing $X$ with an open box contained in the definable open set $\iota^{-1}(W)$.
We finally obtain the definable continuous injective map from an open box in $M^d$ to $M^e$.
We get $d \leq e$ by Corollary \ref{cor:key}.

We next demonstrate the lemma when $\varphi$ is not necessarily continuous by induction on $d$.
When $d=0$, the definable set $X$ is discrete and closed by Proposition \ref{prop:zero}.
In particular, the definable map $\varphi$ is continuous.
Therefore, the lemma holds true in this case.
We next consider the case in which $d>0$.
Let $\mathcal D(\varphi)$ be the set of points at which $\varphi$ is discontinuous.
We have $\dim \mathcal D(\varphi) < \dim X$ by Theorem \ref{thm:dim}(6).
We get $\dim \varphi(\mathcal D(\varphi))=\dim \mathcal D(\varphi)$ by the induction hypothesis.
We obtain $\dim (X \setminus \mathcal D(\varphi)) = \dim \varphi(X \setminus \mathcal D(\varphi))$ because $\varphi$ is continuous on $X \setminus \mathcal D(\varphi)$.
We finally get $\dim \varphi(X)= \max \{\dim \varphi(X \setminus \mathcal D(\varphi)), \dim \varphi(\mathcal D(\varphi))\}= \max \{\dim (X \setminus \mathcal D(\varphi)), \dim (\mathcal D(\varphi))\} = \dim(X)$ by Theorem \ref{thm:dim}(4).
\end{proof}

The following theorem is the second main theorem of this paper.

\begin{theorem}\label{thm:addition}
Consider a definably complete locally o-minimal structure $\mathcal M=(M,<,\ldots)$ enjoying the property (a) in Definition \ref{def:tame_top}.
Let $\varphi:X \rightarrow Y$ be a definable surjective map whose fibers are equi-dimensional; that is, the dimensions of the fibers $\varphi^{-1}(y)$ are constant.
We have $\dim X = \dim Y + \dim \varphi^{-1}(y)$ for all $y \in Y$.  
\end{theorem}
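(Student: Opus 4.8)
The plan is to reduce the general equi-dimensional statement to the zero-dimensional-fiber case already handled in Lemma~\ref{lem:addition0}. Write $k=\dim\varphi^{-1}(y)$, which is independent of $y$ by hypothesis, and set $d=\dim X$, $e=\dim Y$. The upper bound $\dim X\le \dim Y+k$ should come from a fibered version of the dimension inequality: covering $X$ by the preimages of a suitable coordinate decomposition of $Y$ and applying Theorem~\ref{thm:dim}(5) together with Theorem~\ref{thm:dim}(4) to control the dimension of the union. For the matching lower bound, the idea is to carve out of $X$ a definable subset on which $\varphi$ has discrete fibers but which still sees all of $\dim Y$, together with a ``fiber direction'' contributing $k$; concatenating these with Theorem~\ref{thm:dim}(3) (the product formula) and Lemma~\ref{lem:addition0} should give $\dim X\ge e+k$.

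More concretely, first I would handle the case $k=0$, which is exactly Lemma~\ref{lem:addition0}, so $\dim X=\dim Y$ there. For general $k$, I would use a selection-type argument: after permuting coordinates, write $M^m=M^{m-k}\times M^k$ and, using Lemma~\ref{lem:pre} (or Lemma~\ref{lem:pre0} applied fiberwise), find for each $y$ a coordinate projection exhibiting $\varphi^{-1}(y)$ as generically finite-to-one over a $k$-dimensional box. The technical heart is to do this \emph{uniformly} in $y$ over a definable subset of $Y$ of full dimension $e$: one wants a single coordinate projection $\rho:M^m\to M^k$ such that, on a definable set $Y'\subset Y$ with $\dim Y'=e$ and over a definable open subset of $\rho$-images, the fibers of the map $x\mapsto(\varphi(x),\rho(x))$ restricted to $\varphi^{-1}(Y')$ are discrete. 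Applying Lemma~\ref{lem:addition0} to this map gives $\dim(\varphi^{-1}(Y'))=\dim(\text{its image})$, and the image sits inside $Y'\times M^k$, whose dimension is $e+k$ by Theorem~\ref{thm:dim}(3); combined with a lower bound $\dim(\varphi^{-1}(Y'))\ge e+k$ obtained by pushing a $k$-box in a generic fiber around over $Y'$, this pins down $\dim X$.

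The main obstacle I anticipate is precisely this \emph{uniform choice of fiber-projection}: a priori the coordinate projection that works for one fiber $\varphi^{-1}(y)$ need not work for a nearby one, so one must partition $Y$ into finitely many definable pieces according to which of the finitely many coordinate projections $M^m\to M^k$ realizes the fiber as generically finite, then use Theorem~\ref{thm:dim}(4) to see that one of these pieces still has dimension $e$. Making the ``generically finite over its projection'' condition itself definable and uniformly applicable — so that Lemma~\ref{lem:pre0} and Lemma~\ref{lem:pre1} can be invoked on the relative situation over $Y'$ — is the step that requires care; everything else is bookkeeping with Theorem~\ref{thm:dim}(1)--(5) and the product formula. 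I would also double-check the degenerate cases $e=-\infty$ (i.e. $Y=\emptyset$, hence $X=\emptyset$) and $k=0$ separately, since the general argument tacitly assumes nonempty open boxes exist in the relevant dimensions.
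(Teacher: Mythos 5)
Your overall architecture --- reduce everything to the discrete-fiber case of Lemma~\ref{lem:addition0}, and handle the non-uniformity of the fiber projection by partitioning $Y$ into the finitely many pieces $Y_\pi=\{y\;|\;\pi(\varphi^{-1}(y))\text{ has interior}\}$ and invoking Theorem~\ref{thm:dim}(4) --- is exactly the paper's. But the step you flag as the main obstacle (the uniform choice of $\pi$) is actually the easy part; the two steps you treat as bookkeeping each contain a genuine gap. For the upper bound, you cannot arrange that \emph{all} fibers of $x\mapsto(\varphi(x),\rho(x))$ on $\varphi^{-1}(Y')$ are discrete: Lemma~\ref{lem:pre0} only makes the set $B(y)=\{v\;|\;\varphi^{-1}(y)\cap\rho^{-1}(v)\text{ not discrete}\}$ have empty interior, and the corresponding bad locus $\varphi^{-1}(y)\cap\rho^{-1}(B(y))$ can still have full dimension $k$ inside the fiber (e.g.\ a whole $k$-dimensional piece sitting over a single bad value of $v$), so the bad locus in $X$ can have dimension $\dim X$ and discarding it destroys the upper bound. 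The paper circumvents this by first using Theorem~\ref{cor:alternative}/Lemma~\ref{lem:pre} to realize $f=\dim X$ by a continuous injective $\sigma:U\to X$ from an open box $U\subset M^f$, proving (via Lemma~\ref{lem:key3} and a dimension count on the sets $B(z)$) that the good locus $Z\subset M^d\times M^e$ has nonempty interior, and then shrinking $U$ to $\Phi^{-1}(V)$ for an open box $V\subset Z$; only then is Lemma~\ref{lem:addition0} applicable. Some such device is indispensable, and your sketch does not supply it.

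For the lower bound, ``pushing a $k$-box in a generic fiber around over $Y'$'' is not enough: to conclude $\dim X\ge d+e$ you need a definable map from a $(d+e)$-dimensional set \emph{into} $X$ that is injective (or has discrete fibers), and the natural construction is a definable \emph{selection}. The paper builds the set $T$ of pairs $(w,v)$ with $v\in\pi(\varphi^{-1}(\tau(w)))$ and discrete sub-fiber, shows $\dim T=d+e$ by Lemma~\ref{lem:key3}, and then uses property (d) of Definition~\ref{def:tame_top2} to choose a point of $X$ in each such discrete sub-fiber, producing an injective $\psi:T\to X$ to which Lemma~\ref{lem:addition0} and Theorem~\ref{thm:dim}(1) apply. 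Your proposal never invokes the definable choice property (d), and without it the lower bound does not close. So: same strategy as the paper, but both inequalities are missing the idea that actually makes them work.
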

\begin{proof}
Let $X$ and $Y$ be definable subsets of $M^m$ and $M^n$, respectively.
Set $d=\dim(\varphi^{-1}(y))$, $e=\dim(Y)$ and $f=\dim(X)$.
We first reduce to the case in which there exists a coordinate projection $\pi:M^m \rightarrow M^d$ such that $\pi(\varphi^{-1}(y))$ have nonempty interiors for all $y \in Y$.
In fact, consider the set $\Pi_{m,d}$ of all the coordinate projections of $M^m$ onto $M^d$.
Set $Y_{\pi}=\{y \in Y\;|\; \pi(\varphi^{-1}(y)) \text{ has a nonempty interior}\}$.
We get $Y=\bigcup_{\pi \in \Pi_{m,d}} Y_\pi$ by the assumption.
Assume that the theorem is true for the restrictions of $\varphi$ to $\varphi^{-1}(Y_{\pi})$ for all $\pi \in \Pi_{m,d}$.
We have $$\dim X = \max_{\pi \in \Pi_{m,d}} \dim \varphi^{-1}(Y_{\pi}) = d + \max_{\pi \in \Pi_{m,d}} \dim Y_{\pi} = d + \dim(Y)$$ by Theorem \ref{thm:dim}(4).
The theorem holds true for the original $\varphi$.
We may assume that there exists a coordinate projection $\pi:M^m \rightarrow M^d$ such that $\pi(\varphi^{-1}(y))$ have nonempty interiors for all $y \in Y$.
We fix such a $\pi$ through the proof.

We next show that $d+e \leq f$.
By Lemma \ref{lem:pre}, we can get a coordinate projection $p:M^n \rightarrow M^e$, a definable open subset $W$ of $M^e$ contained in $p(Y)$ and a definable continuous injective map $\tau:W \rightarrow Y$ which is homeomorphic onto its image such that $p \circ \tau$ is the identity map and $p^{-1}(w) \cap Y$ is discrete for any $w \in W$.
Consider the definable set
$$T=\{(w,v) \in W \times M^d\;|\;v \in \pi(\varphi^{-1}(\tau(w))) \text{ and } \pi^{-1}(v) \cap \varphi^{-1}(\tau(w)) \text{ is discrete}\} \text{.}$$
The fiber $T_w$ has a nonempty interior for any $w \in W$ by Lemma \ref{lem:pre0}.
Therefore, the set $T$ has a nonempty interior by Lemma \ref{lem:key3}.
In particular, we have $\dim(T)=d+e$.

Consider the definable subset $S=(p \times \pi)^{-1}(T) \cap \Gamma'(\varphi) \cap (\tau(W) \times M^m)$ of $M^m \times M^n$, where $\Gamma'(\varphi)$ denotes the reversed graph of the definable map $\varphi$ given by $\Gamma'(\varphi)=\{(y,x) \in Y \times X\;|\;y=\varphi(x)\}$.
It is obvious that $(p \times \pi)(S)=T$ and $S \cap (p \times \pi)^{-1}(w,v)$ are discrete for all $(w,v) \in T$.
Apply the property (d) to $S$ and the projection $p \times \pi$.
We can get a definable map $\psi':T \rightarrow S$ such that $(p \times \pi) \circ \psi'$ is the identity map on $T$.
Set $\psi=\pi \circ \psi':T \rightarrow X$.
It is obviously injective.
We have $d+e = \dim(T)=\dim \psi(T)\leq f$ by Lemma \ref{lem:addition0} and Theorem \ref{thm:dim}(1). 

We next demonstrate the opposite inequality $d+e \geq f$.
There exist a coordinate projection $q:M^m \rightarrow M^f$, a definable open subset $U$ of $M^f$ contained in $q(X)$ and a definable continuous injective map $\sigma:U \rightarrow X$ by Lemma \ref{lem:pre}.
The notation $\mathcal D(\varphi)$ denotes the set of points at which $\varphi$ is discontinuous.
Since $\dim \mathcal D(\varphi) < \dim X=f$ by Theorem \ref{thm:dim}(6), the projection image $q(\mathcal D(\varphi))$ has an empty interior.
The difference $U \setminus q(\mathcal D(\varphi))$ has a nonempty interior by the property (b).
Shrinking $U$ if necessary, we may assume that $\varphi$ is continuous on $\sigma(U)$.
Take a coordinate projection $p:M^n \rightarrow M^e$ and a definable set $W$ as in the proof of the inequality $d+e \leq f$.
Set $$Z=\{(v,w) \in M^d \times W\;|\;\pi^{-1}(v) \cap (p \circ \varphi)^{-1}(w) \text{ is discrete}\}\text{.}$$
We demonstrate that the set $Z$ has a nonempty interior.
Fix a point $w \in W$.
We have only to demonstrate that $Z_w=\{v \in M^d\;|\;\pi^{-1}(v) \cap (p \circ \varphi)^{-1}(w) \text{ is discrete}\}$ has a nonempty interior for any $w \in W$ by Lemma \ref{lem:key3}.

For any $z \in p^{-1}(w) \cap Y$, set $B(z)=\{v \in M^d\;|\;\pi^{-1}(v) \cap \varphi^{-1}(z) \text{ is not discrete}\}$.
We have $\dim B(z)<d$ for any $z \in p^{-1}(w)$ by Lemma \ref{lem:key3}.
Consider the set $D=\{(v,z) \in M^d \times M^n\;|\;v \in B(z) \text{ and } z \in p^{-1}(w) \cap Y\}$.
We get $\dim D =\sup_{z \in p^{-1}(w) \cap Y} \dim B(z)<d$ by Theorem \ref{cor:alternative} because $p^{-1}(w) \cap Y$ is discrete.
The definable set $\bigcup_{z \in p^{-1}(w) \cap Y} B(z)$ is the projection image of $D$, and it is of dimension smaller than $d$ by Theorem \ref{thm:dim}(5).
In particular, it has an empty interior.
Consider the definable set $ Z'_w = \bigcup_{z \in p^{-1}(w) \cap Y} \pi(\varphi^{-1}(z)) \setminus \left( \bigcup_{z \in p^{-1}(w) \cap Y} B(z) \right)$.
The set $Z'_w$ has a nonempty interior by the property (b) because $\bigcup_{z \in p^{-1}(w) \cap Y} \pi(\varphi^{-1}(z))$ has a nonempty interior by the definition of $\pi$.
On the other hand, the set $Z_w$ contains the set $Z'_w$.
In fact, take a point $v \in Z'_w$.
Consider the restriction of $\varphi$ to $\pi^{-1}(v) \cap  (p \circ \varphi)^{-1}(w)$.
The image is contained in $p^{-1}(w) \cap Y$, and it is discrete.
The fiber at $z \in p^{-1}(w) \cap Y$ is $\pi^{-1}(v) \cap \varphi^{-1}(z)$ and it is also discrete by the definition of $B(z)$ and $Z'_w$.
Finally, the definable set $\pi^{-1}(v) \cap  (p \circ \varphi)^{-1}(w)$ is discrete by applying Lemma \ref{lem:aaa} to the restriction of $\varphi$.
We have demonstrated that $Z_w$ has a nonempty interior.
Therefore, the definable set $Z$ has a nonempty interior.

Take an open box $V$ contained in $Z$.
Consider the definable continuous map $\Phi:U \rightarrow M^d \times M^e$ given by $\Phi(x)=(\pi \circ \sigma(x), p \circ \varphi \circ \sigma(x))$.
Replacing the open definable set $U$ with the definable open set $\Phi^{-1}(V)$ if necessary, we may assume that $\Phi(U) \subset Z$.
By the definition of $Z$, the fiber $\Phi^{-1}(v,w)$ is discrete for any $(v,w) \in Z$.
Therefore, we have $f=\dim U = \dim(\Phi(U)) \leq d+e$ by Lemma \ref{lem:addition0} and Theorem \ref{thm:dim}(1). 
We have finished the proof of the theorem.
\end{proof}

The following corollary is the addition property theorem for definably complete locally o-minimal structures enjoying the property (a) in Definition \ref{def:tame_top}.
\begin{corollary}[Addition property]
Let $\mathcal M=(M,<,\ldots)$ be as in Theorem \ref{thm:addition}.
Let $X$ be a definable subset of $M^m \times M^n$.
Set $X(d)=\{x \in M^m\;|\; \dim X_x=d\}$ for any nonnegative integer $d$.
The set $X(d)$ is definable and we have 
$$\dim \left( \bigcup_{x \in X(d)} \{x\} \times X_x\right)=\dim X(d) +d\text{.}$$
\end{corollary}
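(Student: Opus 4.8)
The plan is to realize the set $Y:=\bigcup_{x \in X(d)}\{x\}\times X_x$ as $X \cap (X(d)\times M^n)$ and to apply Theorem \ref{thm:addition} to the projection $\varphi:Y \rightarrow X(d)$ onto the first $m$ coordinates. The two points that need checking are that $X(d)$ (hence $Y$) is definable and that $\varphi$ has equi-dimensional fibers; granting these, Theorem \ref{thm:addition} immediately yields $\dim Y=\dim X(d)+\dim\varphi^{-1}(x)$ for every $x \in X(d)$, and it remains only to evaluate $\dim\varphi^{-1}(x)$.

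First I would establish definability of $X(d)$. The key observation is that for a uniformly definable family of subsets of $M^n$, the condition ``the dimension is at least $k$'' is itself definable in the parameter. Indeed, by Definition \ref{def:dim}, a definable set $Z \subseteq M^n$ satisfies $\dim Z \geq k$ if and only if $\rho(Z)$ has a nonempty interior for one of the finitely many coordinate projections $\rho:M^n \rightarrow M^k$, and ``$\rho(Z)$ has a nonempty interior'' is expressed by the first-order condition that some open box be contained in $\rho(Z)$. Applying this to the family $\{X_x\}_{x \in M^m}$, and noting that $\{(x,z)\;|\; z \in \rho(X_x)\}$ is the image of $X$ under $\mathrm{id}_{M^m}\times\rho$ and hence definable, shows that $\{x \in M^m\;|\;\dim X_x \geq k\}$ is definable for each $k$. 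Therefore $X(d)=\{x\;|\;\dim X_x \geq d\}\setminus\{x\;|\;\dim X_x \geq d+1\}$ is definable, and consequently so is $Y=X \cap (X(d)\times M^n)$.

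Next I would analyse the fibers of $\varphi:Y \rightarrow X(d)$, $(x,y)\mapsto x$. For $x \in X(d)$ the fiber $X_x$ is nonempty, since its dimension is $d\geq 0$, so $\varphi$ is surjective, and $\varphi^{-1}(x)=\{x\}\times X_x$ has dimension $\dim(\{x\})+\dim X_x=d$ by Theorem \ref{thm:dim}(3). Thus the fibers of $\varphi$ are equi-dimensional of dimension $d$, and Theorem \ref{thm:addition} gives $\dim Y=\dim X(d)+d$, which is the desired equality. If $X(d)=\emptyset$, then $Y=\emptyset$ and both sides equal $-\infty$, so the statement is trivial in that case.

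The only genuine obstacle is the definability of $X(d)$, that is, the uniform first-order expressibility of a dimension bound over a parametrized family of definable sets; once this is in place the corollary is an immediate consequence of Theorem \ref{thm:addition} together with the product formula Theorem \ref{thm:dim}(3), and no further work is needed.
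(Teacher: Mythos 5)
Your proposal is correct and follows essentially the same route as the paper: the paper likewise applies Theorem \ref{thm:addition} to the restriction of the coordinate projection $M^{m+n}\rightarrow M^m$ to $\bigcup_{x\in X(d)}\{x\}\times X_x$, and simply omits the (routine) verification that $X(d)$ is definable, which you have correctly supplied via the first-order expressibility of ``$\rho(X_x)$ contains an open box'' over the finitely many coordinate projections $\rho$.
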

\begin{proof}
It is easy to prove that $X(d)$ is definable.
We omit the proof.
Apply Theorem \ref{thm:addition} to the restriction of the projection $\Pi:M^{m+n} \rightarrow M^m$ to the set $\bigcup_{x \in X(d)} \{x\} \times X_x$, then we get the corollary.
\end{proof}

The following corollary  also holds true:
\begin{corollary}
Let $\mathcal M=(M,<,\ldots)$ be as in Theorem \ref{thm:addition}.
Let $X$ be a definable subset of $M^{m+n}$ and $\pi:M^{m+n} \rightarrow M^m$ be a coordinate projection.
Fix a nonnegative integer $d$.
Assume that, for any $x \in  M^{m+n}$, there exists an open box $U$ containing the point $x$ satisfying the inequality $\dim (\pi(X \cap U)) \leq d$.
Then, we have $\dim(\pi(X)) \leq d$.
\end{corollary}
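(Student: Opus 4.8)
The plan is to argue by contradiction. Suppose $X\neq\emptyset$ and $e:=\dim\pi(X)\geq d+1$; I will exhibit a point of $X$ at which the assumed local bound fails. Permuting coordinates by Theorem~\ref{thm:dim}(2), assume $\pi$ projects onto the first $m$ coordinates. First I would pass to $\overline X$: since $\pi$ is continuous, $\overline X\cap U\subseteq\overline{X\cap U}$ for every open box $U$, whence $\pi(\overline X\cap U)\subseteq\overline{\pi(X\cap U)}$, and $\dim\overline{\pi(X\cap U)}=\dim\pi(X\cap U)$ by Theorem~\ref{thm:dim}(7) and (4); so $\overline X$ again satisfies the hypothesis, while $\pi(X)\subseteq\pi(\overline X)\subseteq\overline{\pi(X)}$ together with $\dim\overline{\pi(X)}=\dim\pi(X)$ gives $\dim\pi(\overline X)=e$. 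Hence I may assume $X$ closed, so that each fibre $X_y$ ($y\in M^m$) is a closed definable subset of $M^n$.

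The heart of the argument is to build a definable continuous map $\gamma\colon V'\to X$, with $V'$ an open box in $M^e$, such that $\pi\circ\gamma$ is a homeomorphism of $V'$ onto an open $e$-dimensional piece of $\pi(X)$. To this end I would first produce a definable $X'\subseteq X$ with $\dim\pi(X')=e$ on which $\pi$ has discrete fibres (when $n=1$ this is not needed; in general it is a technical induction on $n$, splitting the fibres of the projection forgetting one coordinate into the discrete ones and those containing an interval — using Theorem~\ref{thm:dim}(4) to keep $\dim\pi=e$, property (d) in the first case and the closedness of the fibres to pick infima/suprema in the second, then recursing and lifting back along the section). Given $X'$, property (d) provides a definable section $s\colon\pi(X')\to X'$ of $\pi$; Lemma~\ref{lem:pre} (equivalently Theorem~\ref{cor:alternative}) provides an open box $V\subseteq M^e$ and a definable $\varphi\colon V\to\pi(X')$ that is a homeomorphism onto its image with $\rho\circ\varphi=\mathrm{id}_V$ for a coordinate projection $\rho$, so that $\pi\circ(s\circ\varphi)=\varphi$; and property (c) applied to the definable map $v\mapsto s(\varphi(v))$ on $V$ yields an open box $V'\subseteq V$ on which $\gamma:=s\circ\varphi$ is continuous. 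Then $\gamma$ is injective (because $\pi\circ\gamma=\varphi|_{V'}$ is) with continuous inverse $(\varphi|_{V'})^{-1}\circ\pi$, hence a homeomorphism onto $\gamma(V')\subseteq X$, and $\dim\gamma(V')=\dim V'=e$.

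For the contradiction I would apply the hypothesis at $z_*:=\gamma(v_*)$ for some $v_*\in V'$: there is an open box $U_*\ni z_*$ with $\dim\pi(X\cap U_*)\leq d$. Since $\gamma$ is continuous, $\gamma^{-1}(U_*)$ is a definable open neighbourhood of $v_*$ in $V'$ and so contains an open box $V_*\ni v_*$; then $\gamma(V_*)\subseteq X\cap U_*$, whence $\pi(X\cap U_*)\supseteq\pi(\gamma(V_*))=\varphi(V_*)$, and therefore $\dim\pi(X\cap U_*)\geq\dim\varphi(V_*)=\dim V_*=e\geq d+1$ by Theorem~\ref{thm:dim}(1) and (5). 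This contradicts $\dim\pi(X\cap U_*)\leq d$, so $\dim\pi(X)\leq d$.

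The step I expect to require the most care is the construction of the definable section $s$ when $n>1$: a closed definable subset of $M^n$ carries no canonical point in the absence of an ambient group, so one really must strip the ``thick'' directions off the fibres, as indicated, before property (d) becomes applicable. Everything else is a short combination of Theorem~\ref{thm:dim}, Lemma~\ref{lem:pre}, Theorem~\ref{cor:alternative}, and properties (c) and (d).
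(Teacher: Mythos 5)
Your overall strategy is genuinely different from the paper's. The paper never builds a section of $\pi$: it reduces to the case of equi\-/dimensional fibers, invokes Theorem \ref{thm:addition} to get $\dim X=e+f$, uses Corollary \ref{cor:local_dim} to find a point $b$ with $\dim(X\cap V)=e+f$ for every box $V\ni b$, and then applies Theorem \ref{thm:addition} a second time inside the box $U\ni b$ supplied by the hypothesis to conclude $e+f\leq d+l\leq d+f$. Your plan instead constructs, through a well-chosen point of $X$, a continuous local section of $\pi$ over an $e$-dimensional piece of $\pi(X)$ and contradicts the local bound there. The endgame of your argument (the reduction to $\overline{X}$, the use of Lemma \ref{lem:pre}, property (c), Corollary \ref{cor:key}/Theorem \ref{thm:dim}(5), and the final contradiction at $z_*=\gamma(v_*)$) is correct as written.

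The gap is exactly where you predicted it: the construction of $X'\subseteq X$ with $\dim\pi(X')=e$ and discrete $\pi$-fibers. Your mechanism for the ``interval'' case is to use closedness of the fibers to realize an infimum or supremum as an element of the fiber. Passing to $\overline{X}$ makes the fibers of the projection $p$ forgetting the last coordinate closed, so this works at the first level. But at the next level you must recurse on $p(\overline{X})$ (or on the definable set $Z_2$ of base points whose $p$-fiber contains an interval), and projections of closed sets are not closed in this setting; the one-dimensional fibers arising there need not be closed, so $\inf\{t\in F: t\geq c\}$ need not lie in $F$ (e.g.\ $F=(0,b)$). Property (d) cannot be substituted either, since it only applies to discrete fibers, and genuine definable choice for fibers with interior fails already in the pure dense linear order. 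The step is repairable, but by a different device: in the interval case define the endpoint functions $f<g$ of a maximal subinterval of the fiber as in the proof of Lemma \ref{lem:key3}, use Lemma \ref{lem:pre} and properties (b) and (c) to make $f$ and $g$ continuous on an $e$-dimensional piece of the base, and observe that the region between their graphs is open, so a \emph{constant} value $t_0$ serves as a section over a further open piece. With that replacement your recursion closes and the proof goes through; alternatively, the whole construction of $X'$ can be bypassed by following the paper's route through Theorem \ref{thm:addition} and Corollary \ref{cor:local_dim}.
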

\begin{proof}
We first reduce to the case in which the fibers $X \cap \pi^{-1}(x)$ are equi-dimensional for all $x \in \pi(X)$.
In fact, set $Y_k=\{x \in \pi(X)\;|\; \dim(X \cap \pi^{-1}(x))=k\}$ and $X_k=X \cap \pi^{-1}(Y_k)$ for all $1 \leq k \leq n$.
They are definable because of the definition of dimension.
Since we have $\dim(\pi(X_k \cap U)) \leq \dim (\pi(X \cap U))$ for any open box $U$ by Theorem \ref{thm:dim}(1), the conditions in the corollary are satisfied for $X_k$.
Assume that the corollary holds true for $X_k$.
We have $\dim(Y_k)=\dim\pi(X_k) \leq d$.
We obtain $\dim(\pi(X))=\max_{1\leq k \leq n}\dim(Y_k) \leq d$ by Theorem \ref{thm:dim}(4).
The corollary is also true for $X$.
We have succeeded in reducing to the case in which the fibers are equi-dimensional.

Set $e=\dim (\pi(X))$ and $f=\dim(X \cap \pi^{-1}(x))$ for $x \in \pi(X)$.
We have $\dim(X)=e+f$ by Theorem \ref{thm:addition}.
We can take a point $b$ in $R^{m+n}$ such that $\dim(X \cap V)=e+f$ for any open box $V$ containing the point $b$ by Corollary \ref{cor:local_dim}.
Choose an open box $U$ containing the point $b$ so that $\dim (\pi(X \cap U)) \leq d$, which exists by the assumption.
Set $X'=X \cap U$.
It is obvious that the fibers $X' \cap \pi^{-1}(x)$ are of dimension not greater than $f$ for all $x \in \pi(X \cap U)=\pi(X')$.
Set $Y'_k=\{x \in \pi(X')\;|\; \dim(X' \cap \pi^{-1}(x))=k\}$ and $X'_k=X' \cap \pi^{-1}(Y'_k)$ for $1 \leq k \leq f$.
Since we have $X'=\bigcup_{k=1}^f X'_k$, we get $\dim(X'_l)=\dim(X')=e+f$ for some $1 \leq l \leq f$ by Theorem \ref{thm:dim}(4).
Again by Theorem \ref{thm:addition} and Theorem \ref{thm:dim}(1), we get
$e+f = \dim \pi(X'_l)+l\leq \dim(\pi(X \cap U)) + l \leq d+l$.
We finally obtain $e \leq d$ because $0 \leq l \leq f$.
\end{proof}

\section{Decomposition into quasi-special submanifolds}\label{sec:decomposition}

A decomposition theorem into quasi-special submanifolds is discussed in this section.
We first define quasi-special submanifolds.
\begin{definition}\label{def:quasi-special}
Consider an expansion of a densely linearly order without endpoints $\mathcal M=(M,<,\ldots)$.
Let $X$ be a definable subset of $M^n$ and $\pi:M^n \rightarrow M^d$ be a coordinate projection.
A point $x \in X$ is \textit{($X,\pi$)-normal} if there exists an open box $B$ in $M^n$ containing the point $x$ such that $B \cap X$ is the graph of a continuous map defined on $\pi(B)$ after permuting the coordinates so that $\pi$ is the projection onto the first $d$ coordinates.

A definable subset is a \textit{$\pi$-quasi-special submanifold} or simply a \textit{quasi-special submanifold} if, $\pi(X)$ is a definable open set and, for every point $x \in \pi(X)$, there exists an open box $U$ in $M^d$ containing the point $x$ satisfying the following condition:
For any $y \in X \cap \pi^{-1}(x)$, there exist an open box $V$ in $M^n$ and a definable continuous map $\tau:U \rightarrow M^n$ such that $\pi(V)=U$, $\tau(U)=X \cap V$ and the composition $\pi \circ \tau$ is the identity map on $U$.

Let $\{X_i\}_{i=1}^m$ be a finite family of definable subsets of $M^n$.
A \textit{decomposition of $M^n$ into quasi-special submanifolds partitioning $\{X_i\}_{i=1}^m$} is a finite family of quasi-special submanifolds $\{C_i\}_{i=1}^N$ such that $\bigcup_{i=1}^NC_i =M^n$, $C_i \cap C_j=\emptyset$ when $i \not=j$ and $C_i$ has an empty intersection with $X_j$ or is contained in $X_j$ for any $1 \leq i \leq m$ and $1 \leq j \leq N$.
A decomposition $\{C_i\}_{i=1}^N$ of $M^n$ into quasi-special submanifolds \textit{satisfies the frontier condition} if the closure of any quasi-special manifold $\overline{C_i}$ is the union of a subfamily of the decomposition.
\end{definition}

The following lemma guarantees that a definable set $X$ in which all the points are ($X,\pi$)-normal is always a $\pi$-quasi-special submanifold.
This property makes the proof of the decomposition theorem easy.
\begin{lemma}\label{lem:normal}
Consider a definably complete locally o-minimal structure $\mathcal M=(M,<,\ldots)$ enjoying the property (a) in Definition \ref{def:tame_top}.
Let $X$ be a definable subset of $M^n$ and $\pi:M^n \rightarrow M^d$ be a coordinate projection.
Assume that all the points $x \in X$ are ($X,\pi$)-normal.
Then, $X$ is a $\pi$-quasi-special submanifold.
\end{lemma}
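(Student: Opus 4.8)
The plan is to reduce the statement to a single uniformity claim about ``sheets'' of $X$ and then to establish that claim using the dimension theory already available. Throughout, permute the coordinates so that $\pi$ is the projection onto the first $d$ coordinates, and write a point of $M^n$ as $(u,v)$ with $u\in M^d$, $v\in M^{n-d}$.

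First I would record the easy ingredients. If $y\in X$ and $B$ is an open box witnessing that $y$ is $(X,\pi)$-normal, then $B\cap X$ is the graph of a continuous map on $\pi(B)$, so $\pi(B)=\pi(B\cap X)\subseteq\pi(X)$; as $\pi(B)$ is an open box containing $\pi(y)$, the set $\pi(X)$ is a union of open boxes, hence open. The same box meets each fibre $\pi^{-1}(u)$ in at most one point, so every fibre $X_u:=X\cap\pi^{-1}(u)$ with $u\in\pi(X)$ is discrete, hence closed by Lemma \ref{lem:key0}. Since $\pi|_X\colon X\to\pi(X)$ is then surjective with discrete fibres, $\dim X=\dim\pi(X)=d$ by Lemma \ref{lem:addition0}; therefore $\dim(\overline X\setminus X)<d$ by Theorem \ref{thm:dim}(7), and so $\pi(\overline X\setminus X)$ has empty interior in $M^d$ by Theorem \ref{thm:dim}(5).

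Next, fix $x\in\pi(X)$. For each $v$ with $(x,v)\in X$ let $\Sigma_v\subseteq M^d$ consist of the points $u'$ admitting an open box $U'$ containing $x$ and $u'$ and a definable continuous $g\colon U'\to M^{n-d}$ with $g(x)=v$ and $\operatorname{graph}(g)\subseteq X$; this is definable uniformly in $v$, is open, and contains $x$. Because every point of $X$ is $(X,\pi)$-normal, near each of its points $X$ \emph{is} a single graph over a box in $M^d$; hence any two such local graphs through $(x,v)$ agree on the (definably connected) intersection of their domains, and they patch to a well-defined continuous $\gamma_v\colon\Sigma_v\to M^{n-d}$ with $\gamma_v(x)=v$ and $\operatorname{graph}(\gamma_v)\subseteq X$ --- the maximal ``sheet'' of $X$ through $(x,v)$. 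The same local rigidity shows that over any box $U\subseteq\Sigma_{v_1}\cap\Sigma_{v_2}$ the graphs of $\gamma_{v_1}|_U$ and $\gamma_{v_2}|_U$ are disjoint when $v_1\ne v_2$. The statement to be proved now becomes: \emph{there is an open box $U\ni x$ such that $U\subseteq\Sigma_v$ for every $v$ with $(x,v)\in X$, and such that for every such $v$ one can choose a box $W_v\subseteq M^{n-d}$ with $X\cap(U\times W_v)=\operatorname{graph}(\gamma_v|_U)$}. Granting this, $\tau_v:=(\operatorname{id}_U,\gamma_v|_U)\colon U\to M^n$ and $V_v:=U\times W_v$ satisfy $\pi(V_v)=U$, $\tau_v(U)=X\cap V_v$ and $\pi\circ\tau_v=\operatorname{id}_U$, which is precisely the condition of Definition \ref{def:quasi-special}. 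The extreme cases already hold: for $d=0$, $\pi(X)$ is a point, $U$ is forced, and the displayed statement is just ``$X$ is discrete'', the hypothesis; for $d=n$, normality forces $X$ to be open and everything is immediate. These also serve to anchor an induction on the codimension $n-d$ if a direct argument proves awkward.

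The main obstacle is the displayed claim, and the difficulty is that $X_x$ may be infinite, so one cannot merely intersect finitely many of the open sets $\Sigma_v$, nor can a single product box separate two sheets that stay disjoint but come arbitrarily close --- here the tameness hypotheses must be used essentially. The route I would take is to analyse how a domain $\Sigma_v$ can end: if $u'\in\overline{\Sigma_v}\setminus\Sigma_v$ and $\gamma_v$ extends continuously to $u'$ with value $v'$, then $(u',v')\in\overline X\setminus X$ (were $(u',v')\in X$, normality there would enlarge $\Sigma_v$), so $u'\in\pi(\overline X\setminus X)$, a set of empty interior by the second paragraph; the only other possibility is that $\gamma_v$ is unbounded along some approach to $u'$. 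Using definable completeness to form, as a definable function of $v$ over the definable set $\{v:(x,v)\in X\}$, the quantities that record how far $\Sigma_v$ reaches from $x$ in each coordinate direction, and combining this with local o-minimality, the property (a) (via Lemma \ref{lem:aaa} and Lemma \ref{lem:pre1}), and the emptiness of $\myint(\pi(\overline X\setminus X))$, one bounds these quantities away from $x$ uniformly in $v$; a parallel argument, using that $X_x$ is discrete, handles the separation requirement and the ``extra'' connected components of $X\cap\pi^{-1}(U)$ (whose sheets, continued toward $x$, would have to terminate before reaching $x$, hence over a point of $\pi(\overline X\setminus X)$ or of the unboundedness locus). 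Excluding that the domains $\Sigma_v$ pinch down to the single point $x$ as $v$ ranges over an infinite fibre is, I expect, the one genuinely technical step of the proof.
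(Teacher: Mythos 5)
Your reduction is sound as far as it goes: $\pi(X)$ is open, the fibres $X_u$ are discrete and (by Lemma \ref{lem:key0}) closed, the local graphs through a normal point patch into maximal sheets $\gamma_v$ on open sets $\Sigma_v\ni x$, and the lemma is correctly reformulated as a uniformity statement (a single box $U\ni x$ contained in every $\Sigma_v$, together with separating boxes $W_v$) --- modulo the minor point that $\Sigma_v$ should be defined by a first-order condition (e.g.\ ``there exist boxes $U'\ni x,u'$ and $W'\ni v$ such that $X\cap(U'\times W')$ is the graph of a continuous map on $U'$'') rather than by quantifying over definable functions. But the argument stops exactly where the lemma actually lives: you state the uniformity claim, correctly observe that one cannot intersect infinitely many of the $\Sigma_v$, and then only gesture at ``bounding the extents away from $x$ uniformly in $v$,'' explicitly conceding that excluding the pinching of the domains over an infinite fibre is the unresolved technical step. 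That step is the entire content of the lemma. Moreover, the ingredient you put forward most concretely --- that $\pi(\overline X\setminus X)$ has empty interior --- cannot by itself deliver it: a family of boxes $\Sigma_v$ shrinking to $\{x\}$ has $\bigcup_v\partial\Sigma_v$ of empty interior while $\bigcap_v\Sigma_v=\{x\}$, so no contradiction arises from that direction.

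The mechanism that closes the gap in the paper works one coordinate at a time (induction on $e=1,\dots,d$). For $y$ in the fibre $X_c$, define $\varphi_+(y)\in M$ as the supremum of the extents $x'>c_e$ to which the sheet through $y$ spreads over a box of the form $U_{e-1}\times(a,x')\times V'$; this is a well-defined definable function by definable completeness and the inductive hypothesis. Since $X_c$ is discrete, the image $\varphi_+(X_c)$ is discrete by property (a) and, crucially, \emph{closed} by Lemma \ref{lem:key0}; as every element of $\varphi_+(X_c)$ exceeds $c_e$, its infimum $b'_{e,+}$ satisfies $b'_{e,+}>c_e$, and any $b_{e,+}\in(c_e,b'_{e,+})$ bounds the extent simultaneously for \emph{all} $y\in X_c$. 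It is precisely the closedness of definable discrete sets that forbids the quantities $\varphi_+(y)-c_e$ from accumulating at $0$; the symmetric construction on the left gives $b_{e,-}$, and $U_e=U_{e-1}\times(b_{e,-},b_{e,+})$ continues the induction, with the separating boxes coming along for free. Your sketch names property (a) among the tools but never isolates this ``the definable image of a discrete set is discrete and closed, hence its infimum stays away from $c_e$'' step, so as written the proposal has a genuine gap at the heart of the proof.
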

\begin{proof}
We may assume that $\pi$ is the projection onto the first $d$ coordinates without loss of generality.
It is obvious that $\pi(X)$ is open because $X$ is locally the graph of a continuous map.
We fix a point $c \in \pi(X)$.
Note that the fiber $X_c = X \cap \pi^{-1}(c)$ is discrete by the assumption.
The fiber $X_c$ is also closed by Lemma \ref{lem:key0}.
Let $p_e: M^d \rightarrow M^e$ be the projection onto the first $e$ coordinates for all $0 \leq e \leq d$.
We demonstrate the following claim.
The lemma is obvious from the claim for $e=d$.

\medskip
\textbf{Claim.}
Let $e$ be a nonnegative integer with $0 \leq e \leq d$.
There exists an open box $U_e$ in $M^e$ containing the point $p_e(c)$ such that, for any $y \in X_c$, there exist an open box $V_{e,y}$ in $M^{d-e}$ and an open box $W_{e,y}$ in $M^n$ such that $y \in W_{e,y}$, $\pi(W_{e,y})=U_e \times V_{e,y}$ and the intersection of $X$ with $W_{e,y}$ is the graph of a continuous map defined on $U_e \times V_{e,y}$.
\medskip

We prove the claim by induction on $e$.
The claim follows from the assumption that all the points $x \in X$ are ($X,\pi$)-normal when $e=0$.
Consider the case in which $e>0$.
Let $c_e$ be the $e$-th coordinate of the point $c$.
Take an element $d_{+,e} \in M$ with $c_e < d_{+,e}$.
For any $y \in X_c$, let $\varphi_+(y)$ be the supremum of the point $x' \in M$ satisfying 
\begin{enumerate}
\item[(i)] $c_e < x' < d_{+,e}$, and 
\item[(ii)] that there exist $a \in M$ with $a < c_e$, an open box $V'_{e,y}$ in $M^{d-e}$ and an open box $W'_{e,y}$ in $M^n$ such that 
\begin{itemize}
\item $y \in W'_{e,y}$, 
\item $\pi(W'_{e,y})=U_{e-1} \times (a,x') \times V'_{e,y}$ and
\item the intersection of $X$ with $W'_{e,y}$ is the graph of a continuous map defined on $\pi(W'_{e,y})$.
\end{itemize}
\end{enumerate}
Such $x'$ exists and the value $\varphi_+(y)$ is lager than $c_e$ by the induction hypothesis.
We get a definable function $\varphi_+:X_c \rightarrow M$.
The image $\varphi_+(X_c)$ is discrete by the property (a) because the fiber $X_c$ is discrete.
It is closed by Lemma \ref{lem:key0}.
Set $b'_{e,+}=\inf\{z \in \varphi_+(X_c)\}$.
We have $b'_{e,+} > c_e$ because $\varphi_+(X_c) > c_e$.
Take $b_{e,+}$ so that $c_e<b_{e,+}<b'_{e,+}$.

Take an element $d_{-,e} \in M$ with $c_e > d_{-,e}$.
For any $y \in X_c$, we define $\varphi_-(y)$ as the infimum of the point $x \in M$ satisfying 
\begin{enumerate}
\item[(i')] $c_e > x > d_{-,e}$, and 
\item[(ii')] that there exist an open box $V_{e,y}$ in $M^{d-e}$ and an open box $W_{e,y}$ in $M^n$ such that 
\begin{itemize}
\item $y \in W_{e,y}$, 
\item $\pi(W_{e,y})=U_{e-1} \times (x,b_{e,+}) \times V_{e,y}$ and
\item the intersection of $X$ with $W_{e,y}$ is the graph of a continuous map defined on $\pi(W_{e,y})$.
\end{itemize}
\end{enumerate}
We can take a point $x$ satisfying the above conditions (i') and (ii').
In fact, we can take $a$, $V'_{e,y}$ and $W'_{e,y}$ satisfying the condition (ii) by putting $x'=b_{e,+}$.
Let $x$ be an element satisfying the inequality $x \geq a$ and the condition (i'), then $V_{e,y}=V'_{e,y}$ and $W_{e,y}=W'_{e,y} \cap \pi^{-1}(U_{e-1} \times (x,b_{e,+}) \times V_{e,y})$ satisfy the condition (ii').

In the same way as above, the supremum $b'_{e,-}=\sup\{z \in \varphi_-(X_c)\}$ satisfies the inequality $b'_{e,-}<c_e$.
We take $b_{e,-}$ so that $b'_{e,-}<b_{e,-}<c_e$.
Set $U_e=U_{e-1} \times (b_{e,-}, b_{e,+})$.
It is now obvious that $U_e$ satisfies the claim.
We have finished the proofs of both the claim and the lemma.
\end{proof}

We next construct a decomposition of a single definable set.
\begin{lemma}\label{lem:decomposition}
Consider a definably complete locally o-minimal structure $\mathcal M=(M,<,\ldots)$ enjoying the property (a) in Definition \ref{def:tame_top}.
Let $X$ be a definable subset of $M^n$.
There exists a family $\{C_i\}_{i=1}^N$ of mutually disjoint quasi-special submanifolds with $X= \bigcup_{i=1}^N C_i$ and $N \leq 2^{n}$.
\end{lemma}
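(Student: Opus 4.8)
The plan is to peel quasi-special submanifolds off $X$ one at a time, grouped by the dimension they carry, and to bound their number by counting the coordinate projections available at each dimension. I argue by induction on $d=\dim X$, proving the slightly sharper statement that a definable subset of $M^n$ of dimension $\le d$ decomposes into at most $\sum_{e=0}^{d}\binom{n}{e}$ pairwise disjoint quasi-special submanifolds; since $\sum_{e=0}^{n}\binom{n}{e}=2^n$, this implies the lemma. The case $X=\emptyset$ is trivial (the empty family works), so assume $X\neq\emptyset$ and that the claim holds for every definable subset of $M^n$ of dimension $<d$.

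For a coordinate projection $\pi\colon M^n\to M^e$ and a definable $Z\subseteq M^n$, let $Z^{\pi}$ be the (clearly definable) set of $(Z,\pi)$-normal points of $Z$. Two routine observations. First, $Z^{\pi}$ is open in $Z$: if $z$ is $(Z,\pi)$-normal via an open box $B$, then $B\cap Z$ is the graph of a continuous map over $\pi(B)$ and, by continuity of that map, every point of $B\cap Z$ again has a witnessing sub-box, so $B\cap Z\subseteq Z^{\pi}$. Second, because $Z^{\pi}$ is open in $Z$, shrinking a witnessing box so that it meets $Z$ only inside $Z^{\pi}$ shows that every point of $Z^{\pi}$ is $(Z^{\pi},\pi)$-normal; hence $Z^{\pi}$ is a $\pi$-quasi-special submanifold by Lemma \ref{lem:normal}.

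Now enumerate the coordinate projections $M^n\to M^{d}$ as $\pi_1,\dots,\pi_k$ with $k=\binom{n}{d}$. Put $X_0=X$ and, inductively, $X_j=X_{j-1}\setminus X_{j-1}^{\pi_j}$ for $1\le j\le k$; these sets decrease. By the previous paragraph each nonempty $X_{j-1}^{\pi_j}$ is a quasi-special submanifold, and $X_0^{\pi_1},\dots,X_{k-1}^{\pi_k},X_k$ are pairwise disjoint with union $X$. The crux is the claim $\dim X_k<d$. Suppose not; as $X_k\subseteq X$ this forces $\dim X_k=d$, so Lemma \ref{lem:pre} applied to $X_k\subseteq X$ furnishes one of the projections, say $\pi_j$, an open $V\subseteq\pi_j(X_k)$ (nonempty because $\dim X_k=d$), an open $W\subseteq M^n$ and a definable continuous $f\colon V\to X_k$ with $\pi_j(W)=V$, $X\cap W=f(V)$ and $\pi_j\circ f=\mathrm{id}_V$. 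Then $f(V)$ is the graph of a continuous map over $V$; moreover $f(V)=X\cap W\supseteq X_{j-1}\cap W$ while $f(V)\subseteq X_k\subseteq X_{j-1}$ and $f(V)\subseteq W$ give $X_{j-1}\cap W=f(V)$. Therefore every point of $f(V)$ is $(X_{j-1},\pi_j)$-normal, i.e.\ $f(V)\subseteq X_{j-1}^{\pi_j}$, so $f(V)$ is disjoint from $X_j$ and a fortiori from $X_k$; this contradicts $\emptyset\neq f(V)\subseteq X_k$. Hence $\dim X_k<d$.

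It remains to apply the induction hypothesis to $X_k$, partitioning it into at most $\sum_{e=0}^{d-1}\binom{n}{e}$ disjoint quasi-special submanifolds, and to adjoin the at most $k=\binom{n}{d}$ nonempty sets $X_{j-1}^{\pi_j}$; this yields a partition of $X$ into at most $\sum_{e=0}^{d}\binom{n}{e}\le 2^n$ disjoint quasi-special submanifolds. The main obstacle, and the only genuinely nontrivial point, is the termination claim $\dim X_k<d$: one must see that once each of the $\binom{n}{d}$ coordinate projections has been used to strip off its full normal locus, the leftover can no longer be $d$-dimensional. The graph-producing conclusion of Lemma \ref{lem:pre}, together with the fact that at every stage we removed the entire normal locus rather than merely some normal points, is exactly what makes the argument close.
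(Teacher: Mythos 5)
Your proof is correct. It follows the same overall strategy as the paper --- remove the locus of $(X,\pi)$-normal points for coordinate projections onto $M^{\dim X}$, recognize each such locus as a quasi-special submanifold via Lemma~\ref{lem:normal}, and recurse on what is left --- but the recursion is organized differently and, more importantly, termination is proved differently. The paper inducts on the lexicographically ordered ``full dimension'' $(d,e)$, removes the normal locus of a \emph{single} projection $\pi$ at each step, and must then show that $\pi(B)$ has empty interior for that specific $\pi$; since Lemma~\ref{lem:pre} only furnishes \emph{some} projection, the paper cannot invoke it directly and instead runs a hands-on argument (Lemma~\ref{lem:pre0}, property (d), Theorem~\ref{thm:dim}(6), and the frontier set $Z=\partial(X\setminus\tau(U))$) to manufacture an $(X,\pi)$-normal point inside $B$. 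You instead exhaust all $\binom{n}{d}$ projections before recursing, which lets you induct on plain dimension and reduces termination to one application of Lemma~\ref{lem:pre} to the pair $X_k\subseteq X$: the relative conclusion $X\cap W=f(V)$ (not merely $X_k\cap W=f(V)$) immediately exhibits points that were already $(X_{j-1},\pi_j)$-normal and hence already removed. This is a genuine streamlining of the only delicate step, and the count ($\binom{n}{d}$ pieces per dimension, $2^n$ in total) comes out the same. One small point you pass over: to get $(X_{j-1},\pi_j)$-normality you must produce an open \emph{box} $B$ with $B\cap X_{j-1}$ a graph over $\pi_j(B)$, not just the open set $W$; this needs the routine shrinking $B=B_0\cap\pi_j^{-1}(V')$ with $f(V')\subseteq B_0\subseteq W$, exactly as in the paper's own construction of $V'=V\cap\pi^{-1}(U')$, so it is not a gap.
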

\begin{proof}
We first define the full dimension of a definable subset $X$ of $M^n$.
Set $d=\dim X$.
The notation $\Pi_{n,d}$ denotes the set of all the coordinate projections of $M^n$ onto $M^d$.
The set $\Pi_{n,d}$ is a finite set. 
The \textit{full dimension} $\myfdim(X)$ is $(d,e)$ by definition if $d=\dim(X)$ and $e$ is the number of elements in $\Pi_{n,d}$ under which the projection image of $X$ has a nonempty interior.
The pairs $(d,e)$ are ordered by the lexicographic order.

We prove the the theorem by induction on $\myfdim(X)$.
When $\dim(X)=0$, $X$ is closed and discrete by Proposition \ref{prop:zero}.
The definable set $X$ is obviously a quasi-special submanifold in this case.

We consider the case in which $\dim(X) >0$.
Set $(d,e)=\myfdim(X)$.
Take a coordinate projection $\pi:M^n \rightarrow M^d$ such that $\pi(X)$ has a nonempty interior.
Set $G=\{x \in X\;|\; x \text{ is } (X,\pi) \text{-normal}\}$ and $B=X \setminus G$.
It is obvious that any point $x \in G$ is $(G,\pi)$-normal.
The definable set $G$ is $\pi$-quasi-special submanifold by Lemma \ref{lem:normal}.

We demonstrate that $\pi(B)$ has an empty interior.
Assume the contrary.
There exists an open box $U$ such that the fibers $B_x=\pi^{-1}(x) \cap B$ are discrete for all $x \in U$ by Lemma \ref{lem:pre0}.
We can take a definable map $\tau:U \rightarrow B$ with $\pi(\tau(x))=x$ for all $x \in U$ because the structure $\mathcal M$ possesses the property (d) in Definition \ref{def:tame_top2}.
The dimension of points at which the map $\tau$ is discontinuous is of dimension smaller than $d$ by Theorem \ref{thm:dim}(6).
We may assume that the restriction of $\tau$ to $U$ is continuous shrinking $U$ if necessary.

Set $Z=\partial (X \setminus \tau(U))$.
We get $\dim Z = \dim \partial (X \setminus \tau(U)) < \dim (X \setminus \tau(U)) \leq \dim X=d$ by Theorem \ref{thm:dim}(1), (7).
We have $\dim \overline{Z} = \dim Z <d$ again by Theorem \ref{thm:dim}(4), (7). 
On the other hand, we have $d=\dim U =\dim \pi(\tau(U)) \leq \dim \tau(U) \leq \dim X =d$ by Theorem \ref{thm:dim}(1), (5). 
We get $\dim(\tau(U))=d$.
It means that $\tau(U) \not\subset \overline{Z}$ by Theorem \ref{thm:dim}(1).

Take a point $p$ in $\tau(U) \setminus \overline{Z}$.
Take a sufficiently small open box $V$ containing the point $p$. 
We have $X \cap V=\tau(U) \cap V$ by the definition of $Z$ and $p$.
Since the restriction of $\tau$ to $U$ is continuous, there exists an open box $U'$ contained in $U \cap \tau^{-1}(V)$.
Consider the open box $V'=V \cap \pi^{-1}(U')$.
It is obvious that $X \cap V'=\tau(U) \cap V'$ is the graph of the restriction of $\tau$ to $U'$ by the definition.
Any point $\tau(U) \cap V'$ is $(X,\pi)$-normal, but it contradicts to the definition of $B$ and the inclusion $\tau(U) \subset B$.
We have shown that $\pi(B)$ has an empty interior.
In particular, we get $\myfdim(B) < \myfdim(X)$.

There exists a decomposition $B=C_1 \cup \ldots \cup C_k$ of $B$ satisfying the conditions in the lemma by the induction hypothesis.
The decomposition $X=G \cup C_1 \cup \ldots \cup C_k$ is the desired decomposition of $X$.

It is obvious that the number of quasi-special submanifolds $N$ is not greater than $$\sum_{d=0}^n (\text{the cardinality of }\Pi_{n,d})=\sum_{d=0}^n \left(\begin{array}{c}n\\d\end{array}\right)=2^n\text{.}$$
\end{proof}

We finally get the following two decomposition theorems:
\begin{theorem}\label{thm:decomposition}
Consider a definably complete locally o-minimal structure $\mathcal M=(M,<,\ldots)$ enjoying the property (a) in Definition \ref{def:tame_top}.
Let $\{X_i\}_{i=1}^m$ be a finite family of definable subsets of $M^n$.
There exists a decomposition $\{C_i\}_{i=1}^N$ of $M^n$ into quasi-special submanifolds partitioning $\{X_i\}_{i=1}^m$ with $N \leq 2^{m+n}$.
\end{theorem}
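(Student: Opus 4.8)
The plan is to reduce the statement to the single-set decomposition already established in Lemma \ref{lem:decomposition} by passing to the atoms of the finite Boolean algebra generated by $X_1,\ldots,X_m$. Concretely, for each subset $S \subseteq \{1,\ldots,m\}$ I would set
\[
A_S = \bigcap_{i \in S} X_i \cap \bigcap_{i \notin S} (M^n \setminus X_i)\text{.}
\]
Each $A_S$ is definable, the family $\{A_S\}_S$ is pairwise disjoint, its union is $M^n$, and for every index $i$ the atom $A_S$ is contained in $X_i$ when $i \in S$ and disjoint from $X_i$ when $i \notin S$. At most $2^m$ of the atoms $A_S$ are nonempty.

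Next I would apply Lemma \ref{lem:decomposition} to each nonempty atom $A_S$, obtaining a family $\{C^S_j\}_j$ of mutually disjoint quasi-special submanifolds with $A_S = \bigcup_j C^S_j$ and with at most $2^n$ members. Collecting these families over all nonempty $S$ yields a finite family $\{C_i\}_{i=1}^N$ of quasi-special submanifolds. The members are pairwise disjoint, since distinct atoms are disjoint and within a fixed atom the family produced by Lemma \ref{lem:decomposition} is already pairwise disjoint, and their union is $\bigcup_S A_S = M^n$, so $\{C_i\}_{i=1}^N$ is a decomposition of $M^n$ into quasi-special submanifolds.

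It remains to verify the partitioning property and the cardinality bound. Every $C_i$ is contained in some atom $A_S$, and since $A_S$ is either contained in or disjoint from each $X_j$, the same holds for $C_i$; hence $\{C_i\}_{i=1}^N$ partitions $\{X_i\}_{i=1}^m$ in the sense of Definition \ref{def:quasi-special}. For the count, there are at most $2^m$ nonempty atoms, each contributing at most $2^n$ quasi-special submanifolds, so $N \leq 2^m \cdot 2^n = 2^{m+n}$. There is no real obstacle here beyond Lemma \ref{lem:decomposition}; the only points needing a little care are the bookkeeping of the bound and the observation that disjointness, covering, and the partitioning property are all inherited from the atom decomposition combined with the single-set decomposition of Lemma \ref{lem:decomposition}.
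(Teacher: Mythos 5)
Your proposal is correct and is essentially identical to the paper's proof: the atoms $A_S$ are exactly the sets $X_\sigma=\bigcap_{i=1}^m X_i^{\sigma(i)}$ used there, and both arguments then apply Lemma \ref{lem:decomposition} to each atom and combine, yielding the same bound $2^m\cdot 2^n=2^{m+n}$.
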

\begin{proof}
Set $X_i^0=X_i$ and $X_i^1=M^n \setminus X_i$ for all $1 \leq i \leq m$.
For any $\sigma \in \{0,1\}^m$, the notation $\sigma(i)$ denotes the $i$-th component of $\sigma$.
Set $X_\sigma = \bigcap_{i=1}^m X_i^{\sigma(i)}$ for any $\sigma \in \{0,1\}^m$.
The family $\{X_\sigma\}_{\sigma \in \{0,1\}^m}$ is mutually disjoint and satisfies the equality $M^n=\bigcup_{\sigma \in \{0,1\}^m} X_\sigma$.
For all $\sigma \in \{0,1\}^m$, there exist families $\{C_{\sigma,j}\}_{j=1}^{N_\sigma}$ of mutually disjoint quasi-special submanifolds with $X_\sigma= \bigcup_{j=1}^{N_\sigma} C_{\sigma,j}$ and $N_\sigma \leq 2^n$ by Lemma \ref{lem:decomposition}.
The family $\bigcup_{\sigma  \in \{0,1\}^m} \{C_{\sigma,j}\}_{j=1}^{N_\sigma}$ gives the decomposition we are looking for.
\end{proof}

\begin{theorem}\label{thm:frontier_condition}
Consider a definably complete locally o-minimal structure $\mathcal M=(M,<,\ldots)$ enjoying the property (a) in Definition \ref{def:tame_top}.
Let $\{X_i\}_{i=1}^m$ be a finite family of definable subsets of $M^n$.
There exists a decomposition $\{C_i\}_{i=1}^N$ of $M^n$ into quasi-special submanifolds partitioning $\{X_i\}_{i=1}^m$ and satisfying the frontier condition.
Furthermore, the number $N$ of quasi-special submanifolds is not greater than the number uniquely determined only by $m$ and $n$.
\end{theorem}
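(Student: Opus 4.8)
The idea is to strengthen the statement so that it admits an induction on a dimension parameter, using Theorem~\ref{thm:decomposition} as the tool that produces (a priori frontier-condition-free) decompositions at each stage. Concretely I would prove: for every \emph{closed} definable subset $S$ of $M^n$ and every finite family $\{X_i\}_{i=1}^m$ of definable subsets of $M^n$, there is a finite family $\{C_j\}_{j=1}^N$ of mutually disjoint quasi-special submanifolds with $\bigcup_j C_j=S$, such that each $C_j$ is in-or-out of each $X_i$, each $\overline{C_j}$ is a union of a subfamily, and $N$ is bounded by a quantity depending only on $m$ and $\dim S$. Taking $S=M^n$ yields both assertions of the theorem. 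The proof proceeds by induction on $d:=\dim S$. The base case $d=0$ is immediate: $S$ is closed and discrete by Proposition~\ref{prop:zero}, every subset of $S$ is again closed and discrete, each such subset is a quasi-special submanifold, and a closed piece trivially satisfies the frontier condition; the number of pieces is at most $2^m$.

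For the inductive step ($d\ge 1$), first apply Theorem~\ref{thm:decomposition} to obtain a decomposition of $M^n$ into quasi-special submanifolds partitioning $\{S\}\cup\{X_i\}_{i=1}^m$, and let $\mathcal C$ be the subfamily of pieces contained in $S$; then $\mathcal C$ partitions $S$, is in-or-out of each $X_i$, and has at most $2^{(m+1)+n}$ members. Let $U_1,\dots,U_k$ be the pieces of $\mathcal C$ of dimension exactly $d$ and set $W=U_1\cup\dots\cup U_k$; the remaining pieces have dimension $<d$, so $\dim(S\setminus W)<d$ by Theorem~\ref{thm:dim}(4). Now put $Z=\overline{S\setminus W}\cup\bigcup_{j=1}^k\bigl(\overline{U_j}\setminus U_j\bigr)$. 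This is a closed definable subset of $S$, and $\dim Z<d$ by Theorem~\ref{thm:dim}(4),(7). Set $U_j'=U_j\setminus Z$. I would then check the following routine facts: each $U_j'$ is again a $\pi_j$-quasi-special submanifold (using that $\pi_j$ restricts to an open map on $U_j$, that $Z$ is closed, and the local-graph description of $U_j$, very much in the spirit of Lemma~\ref{lem:normal}); the $U_j'$ are pairwise disjoint with $\bigcup_j U_j'=S\setminus Z$ and each $U_j'$ relatively clopen in $S\setminus Z$; and each frontier $\overline{U_j'}\setminus U_j'$ is contained in $Z$. The last point uses that a point $p\in U_j'$ lies in neither $\overline{S\setminus W}$ nor $\overline{U_{j'}}$ for $j'\neq j$ nor $\overline{U_j\cap Z}$, all of which are closed subsets of $Z$ not containing $p$.

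It remains to decompose $Z$. Apply the induction hypothesis to the closed definable set $Z$ (recall $\dim Z<d$) and to the finite family consisting of $\{X_i\}_{i=1}^m$ together with the sets $\overline{U_j'}$ for $1\le j\le k$; this gives a frontier-condition decomposition $\{D_\ell\}$ of $Z$ into quasi-special submanifolds, in-or-out of each $X_i$ and of each $\overline{U_j'}$, with the number of pieces bounded in terms of $m$, $k\le 2^{(m+1)+n}$ and $\dim Z<d$, hence ultimately in terms of $m$ and $d$. I claim $\{U_1',\dots,U_k'\}\cup\{D_\ell\}_\ell$ is the desired decomposition of $S$: the pieces are mutually disjoint quasi-special submanifolds with union $(S\setminus Z)\cup Z=S$; each is in-or-out of every $X_i$ (the $U_j'$ by inheritance from $\mathcal C$, the $D_\ell$ by construction); and the frontier condition holds because for a top piece $U_j'$ one has $\overline{U_j'}=U_j'\cup\bigl(\overline{U_j'}\setminus U_j'\bigr)$ with $\overline{U_j'}\setminus U_j'=\overline{U_j'}\cap Z$ a union of the $D_\ell$ (that set was put into the family we partitioned), while for a piece $D_\ell\subseteq Z$ its closure lies in the closed set $Z$ and is a union of $D_\ell$'s by the frontier condition from the induction hypothesis. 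Finally $N=k+|\{D_\ell\}|$ is bounded purely in terms of $m$ and $d$; unwinding the recursion for $S=M^n$ gives a (tower-type) bound depending only on $m$ and $n$, which is why the statement asserts existence of such a bound without exhibiting it.

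I expect the main obstacle to be conceptual rather than computational: pinning down the correct strengthening, in particular realizing that one must (i) allow arbitrary closed $S$ and carry the dimension parameter $\dim S$ so that peeling off the top-dimensional pieces actually closes up into an induction, and (ii) choose $Z$ large enough — containing $\overline{S\setminus W}$ and all the frontiers $\overline{U_j}\setminus U_j$ — that every frontier created by the refinement $U_j\rightsquigarrow U_j'$ is absorbed into $Z$ and handled by the inductive call, yet still low-dimensional. Once this bookkeeping is set up, each individual verification (that the $U_j'$ remain quasi-special, that they are relatively clopen in $S\setminus Z$, that the dimensions drop) is a direct application of Theorem~\ref{thm:dim}, Proposition~\ref{prop:zero}, and the definition of quasi-special submanifold, with no new topological input beyond what is already available.
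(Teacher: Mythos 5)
Your argument is sound and reaches the theorem, but it is organized differently from the paper's proof. The paper runs a \emph{reverse} induction on a dimension threshold $d$ applied to decompositions of all of $M^n$: at each stage it keeps the pieces of dimension $\geq d$ untouched, adjoins the frontiers of the dimension-$d$ pieces to the family being partitioned, re-applies Theorem \ref{thm:decomposition}, and discards those new pieces that are contained in old pieces of dimension $\geq d$; Theorem \ref{thm:dim}(7) guarantees that only strata of dimension $<d$ are disturbed, so after $n+1$ stages the frontier condition holds throughout. You instead prove a relativized statement for an arbitrary \emph{closed} definable set $S$ by induction on $\dim S$: peel off the top-dimensional pieces $U_j$, remove from them a closed set $Z$ of smaller dimension that absorbs $\overline{S\setminus W}$ and every frontier $\overline{U_j}\setminus U_j$, and recurse on $Z$ carrying the sets $\overline{U_j'}$ as additional data to be partitioned. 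The two routes cost about the same. Yours needs the extra verification that $U_j\setminus Z$ is still a quasi-special submanifold (routine via the $(X,\pi)$-normality criterion and Lemma \ref{lem:normal}, since $Z$ is closed), which the paper avoids by never modifying its high-dimensional pieces; in exchange your frontier-condition check is cleaner, because $\overline{U_j'}\setminus U_j'=\overline{U_j'}\cap Z$ is by construction a union of pieces of the recursive decomposition of $Z$, whereas the paper must argue about which new pieces survive the merge with the retained old ones. One bookkeeping remark: the bound you propagate at each level also depends on the ambient dimension $n$ (through the $2^{m'+n}$ of Theorem \ref{thm:decomposition}), not only on $m'$ and $\dim S$; this is harmless, since $\dim S\leq n$ and the theorem only asks for a bound determined by $m$ and $n$.
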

\begin{proof}
By reverse induction on $d$, we construct a decomposition $\{C_{\lambda}\}_{\lambda \in \Lambda_d}$ of $M^n$ into quasi-special submanifolds partitioning $\{X_i\}_{i=1}^m$ such that the closures of all the quasi-special submanifolds of dimension not smaller than $d$ are the unions of subfamilies of the decomposition. 

When $d=n$, take a decomposition $\{D_\lambda\}_{\lambda \in \Lambda}$ of $M^n$ into quasi-special submanifolds partitioning $\{X_i\}_{i=1}^m$ by Theorem \ref{thm:decomposition}.
Set $\Lambda_n'=\{\lambda \in \Lambda\;|\;\dim(D_{\lambda})=n\}$.
Get a decomposition $\{E_\lambda\}_{\lambda \in \widetilde{\Lambda_n}}$ of $M^n$ into quasi-special submanifolds partitioning the family $\{D_\lambda\}_{\lambda \in \Lambda} \cup \{\overline{D_{\lambda}}\setminus D_{\lambda}\}_{\lambda \in \Lambda_n'}$.
Consider the set 
$$
\widetilde{\Lambda_n}'=\{\lambda \in \widetilde{\Lambda_n} \;|\; E_\lambda \text{ is not contained in any }D_{\lambda'}\text{ with }\lambda' \in \Lambda_n'\}\text{.}
$$
We always have $\dim(E_\lambda)<n$ for all $\lambda \in \widetilde{\Lambda_n}'$ by Theorem \ref{thm:dim}(7).
Hence, the family $\{D_\lambda\}_{\lambda \in \Lambda_n'} \cup \{E_\lambda\}_{\lambda \in \widetilde{\Lambda_n}'}$
is trivially a decomposition of $M^n$ into quasi-special submanifolds partitioning $\{X_i\}_{i=1}^m$ we are looking for.

We next consider the case in which $d < n$.
Let $\{D_{\lambda}\}_{\lambda \in \Lambda_{d+1}}$ be a decomposition of $M^n$ into quasi-special submanifolds partitioning $\{X_i\}_{i=1}^m$ such that the closures of all the quasi-special submanifolds of dimension not smaller than $d+1$ are the unions of subfamilies of the decomposition. 
It exists by the induction hypothesis.
Set $\Lambda_d'=\{\lambda \in \Lambda_{d+1}\;|\;\dim(D_{\lambda})=d\}$ and $\Lambda_d''=\{\lambda \in \Lambda_{d+1}\;|\;\dim(D_{\lambda}) \geq d\}$.
Get a decomposition $\{E^d_\lambda\}_{\lambda \in \widetilde{\Lambda_d}}$ of $M^n$ into quasi-special submanifolds partitioning the family $\{D_\lambda\}_{\lambda \in \Lambda_{d+1}} \cup \{\overline{D_{\lambda}}\setminus D_{\lambda}\}_{\lambda \in \Lambda_d'}$.
Set 
$\widetilde{\Lambda_d}'=\{\lambda \in \widetilde{\Lambda_d} \;|\; E_\lambda \text{ is not contained in any }D_{\lambda'}\text{ with }\lambda' \in \Lambda_d''\}\text{.}
$
The family $\{D_\lambda\}_{\lambda \in \Lambda_d''} \cup \{E_\lambda\}_{\lambda \in \widetilde{\Lambda_d}'}$
is a decomposition of $M^n$ into quasi-special submanifolds partitioning $\{X_i\}_{i=1}^m$ we want to construct.

The `furthermore' part of the theorem is obvious from the proof.
\end{proof}

\section*{Acknowledgment}
In the initial draft, the author considers structures simultaneously satisfying the property (a) in Definition \ref{def:tame_top} and the properties (b) through (d) in Definition \ref{def:tame_top2}, ignoring their dependence.
The author appreciates an anonymous referee for pointing out their dependence.


\begin{thebibliography}{99}
\bibitem{F}
A. Fornasiero,
{Locally o-minimal structures and structures with locally o-minimal open core},
Ann. Pure Appl. Logic, {164} (2013), 211-229.

\bibitem{FS}
A. Fornasiero and T. Servi,
{Definably complete Baire structure},
Fund. Math., {209} (2010), 215-241.

\bibitem{Fuji}
M. Fujita,
{Uniformly locally o-minimal structures and locally o-minimal structures admitting local definable cell decomposition},
Ann. Pure Appl. Logic, {171} (2020), 102756.

\bibitem{Fuji2}
M. Fujita,
{Dimension inequality for a definably complete uniformly locally o-minimal structure of the second kind},
J. Symbolic Logic, {85} (2020), 1654-1663.

\bibitem{H}
P. Hieronymi,
{An analogue of the Baire category theorem},
J. Symbolic Logic, {78} (2013), 207-213.

\bibitem{KTTT} 
T. Kawakami, K. Takeuchi, H. Tanaka and A. Tsuboi, 
{Locally o-minimal structures},
J. Math. Soc. Japan, {64} (2012), 783-797.

\bibitem{MMS}
D. Macpherson, D. Marker and C. Steinhorn,
{Weakly o-minimal structures and real closed fields},
Trans. Amer. Math. Soc., {352} (2000), 5435-5483.

\bibitem{M}
C. Miller,
{Expansions of dense linear orders with the intermediate value property},
J. Symbolic Logic, {66} (2001), 1783-1790.

\bibitem{M2}
C. Miller, 
{Tameness in expansions of the real field}
in {Logic Colloquium \textquotesingle 01},  
eds. M. Baaz, S. -D. Friedman and J. Kraj\'{i}\u{c}ek
(Cambridge University Press, Cambridge, 2005), pp. 281-316.

\bibitem{S}
H. Schoutens, 
{O-minimalism},
J. Symbolic Logic, {79} (2014), 355-409.


\bibitem{T}
A. Thamrongthanyalak, 
{Michael's selection theorem in d-minimal expansions of the real field},
Proc. Amer. Math. Soc., {147} (2019), 1059-1071.

\bibitem{TV}
C. Toffalori and K. Vozoris, 
{Notes on local o-minimality},
MLQ Math. Log. Q., {55} (2009), 617-632.

\bibitem{vdD}
L. van den Dries, 
{Tame topology and o-minimal structures},
London Mathematical Society Lecture Note Series, vol. 248.
Cambridge University Press, Cambridge, 1998.

\bibitem{W}
R. Wencel,
{Topological properties of sets definable in weakly o-minimal structures},
J. Symbolic Logic, {75} (2010), 841-867.
\end{thebibliography}
\end{document}